\documentclass[12pt]{amsart}
\usepackage[margin=1in]{geometry}
\usepackage{xcolor}
\usepackage[utf8]{inputenc}
\usepackage{amsfonts, amsthm, amssymb,amscd,amsmath, blindtext}
\usepackage{caption}
\usepackage{subcaption}
\usepackage{pdflscape}
\usepackage{multirow}
\usepackage{graphicx}
\usepackage{enumitem}
\usepackage{amssymb}
\usepackage{ytableau}
\usepackage{tikz}
\usepackage{tikz-cd}
\usepackage{float, pifont}
\usepackage{mathtools}
\usepackage[pagebackref,colorlinks=true,citecolor=blue,linkcolor=blue]{hyperref}
\usepackage{parskip}
\usepackage[capitalise, noabbrev]{cleveref}
\usepackage{fancyvrb}

\usepackage{algorithm}
\usepackage[noend]{algpseudocode}

\usetikzlibrary{arrows.meta}

\newtheorem{theorem}{Theorem}[section]
\newtheorem{lemma}[theorem]{Lemma}
\newtheorem{corollary}[theorem]{Corollary}
\newtheorem{proposition}[theorem]{Proposition}

\newtheorem{claim}[theorem]{Claim}

\theoremstyle{remark}
\newtheorem{remark}[theorem]{Remark}
\newtheorem{question}[theorem]{Question}

\theoremstyle{definition}
\newtheorem{definition}[theorem]{Definition}
\newtheorem{ex}[theorem]{Example}

\newtheorem{construction}{Construction}
\newtheorem{notation}[theorem]{Notation}
\newtheorem{setup}[theorem]{Set-Up}

\DeclareMathOperator{\Tor}{Tor}
\DeclareMathOperator{\reg}{reg}

\DeclareMathOperator{\lcm}{lcm}

\DeclareMathOperator{\depth}{depth}
\DeclareMathOperator{\pd}{pdim}
\DeclareMathOperator{\supp}{supp}

\newcommand{\R}{\mathbb{R}}

\newcommand{\F}{\mathbb{F}}

\newcommand{\cw}{\mathbf{c}}
\newcommand{\C}{\mathcal{C}}

\newcommand{\U}{\mathcal{U}}

\def\P{{\mathcal P}}

\def\p{{\bf p}}

\def\x{{\bf x}}
\def\y{{\bf y}}

\def\1{{\bf 1}}
\def\0{{\bf 0}}
\def\r{{\mathbf r}^{\mathbb L}}

\begin{document}

\title{Neural codes via homological invariants of polarized neural ideals}

\author{Selvi Kara}
\address{Department of Mathematics, Bryn Mawr College,  906 New Gulph Rd, Bryn Mawr, PA 19010, USA}
\email{skara@brynmawr.edu}

\author{Ellie Lew}
\address{Department of Mathematics, University of California, One Shields Avenue, Davis, CA 95616-8633, USA}
\email{ejlew@ucdavis.edu}

\begin{abstract}
For a neural code $\C\subseteq\F_2^n$, polarizing the canonical form generators of the neural ideal $J_\C$
yields a squarefree monomial ideal $\P(J_\C)\subset k[x_1,\dots,x_n,y_1,\dots,y_n]$, the \emph{polarized neural ideal}, and an associated
simplicial complex $\Delta_\C$, the \emph{polar complex}.  We study the graded invariants
$\pd(\P(J_\C))$ and $\reg(\P(J_\C))$ via the topology of $\Delta_\C$, showing that simple geometric
features of the Hamming cube  organize their extremal behavior. 
We prove $\reg(\P(J_\C))\le 2n-1$, with equality precisely when $\C$ is obtained from $\F_2^n$ by deleting
an antipodal pair. Using connectedness properties of induced subcomplexes of
$\Delta_\C$, we obtain $\pd(\P(J_\C))\le 2n-3$, and we give an explicit family of codes attaining equality, each consisting of antipodal pairs.
At the opposite end, we identify the cube geometry behind the smallest values: $\reg(\P(J_\C))=1$ forces $\C$
to be a coordinate subcube of $\F_2^n$, while $\pd(\P(J_\C))=0$ forces $\C$ to be the complement of one.
Finally, we construct families realizing large regions of the $(\pd,\reg)$-plot for fixed $n$.
\end{abstract}

\maketitle

\section{Introduction}\label{sec:intro}
Combinatorial neural codes provide a discrete model for patterns of neural activity.  A code
$\C\subseteq \F_2^n$ records which subsets of $n$ neurons can be simultaneously active across stimuli.
Beyond being a list of observed firing patterns, a code implicitly encodes constraints among receptive
fields and, more broadly, structural information about an underlying stimulus representation that is
shared across realizations.  Extracting such realization invariant structure is subtle, and algebra
provides a systematic way to do so.  In particular, Curto et al.~\cite{smb}
introduced \emph{neural ideals} as an algebraic encoding of essential receptive-field relations; see also
\cite{acc,rgg, youngsphd} and the references therein.

To a code $\C\subseteq \F_2^n$ one associates a neural ideal $J_\C\subset k[x_1,\dots,x_n]$.  The ideal
$J_\C$ can be studied via its \emph{canonical form} $CF(J_\C)$, a distinguished generating set
consisting of the minimal pseudo-monomials encoding minimal receptive-field relations; this set
generates $J_\C$ (not necessarily minimally).  Since neural ideals are typically not graded, we pass
to a graded setting by polarizing the canonical form generators, following~\cite{gjs}: replacing each
factor $(1-x_i)$ by a new variable $y_i$ produces a squarefree monomial ideal
\[
\P(J_\C)\ \subset k[x_1,\dots,x_n,y_1,\dots,y_n],
\]
the \emph{polarized neural ideal} of $\C$.  This is the main algebraic object we study in this paper.
Our goal is to organize neural codes using homological data of $\P(J_\C)$.  We focus on the graded
invariants $\pd(\P(J_\C))$ and $\reg(\P(J_\C))$, namely the projective dimension and
Castelnuovo--Mumford regularity of $\P(J_\C)$.

Because $\P(J_\C)$ is squarefree, it is the Stanley--Reisner ideal of a simplicial complex $\Delta_\C$,
which we call the \emph{polar complex} (following the terminology of~\cite{gjs}).  In particular, the
multigraded Betti numbers of $\P(J_\C)$, and hence $\pd(\P(J_\C))$ and $\reg(\P(J_\C))$, are controlled
by the topology of induced subcomplexes of $\Delta_\C$ via Hochster's formula.  This
correspondence forms the foundation of our homological study of neural codes.

Several authors have associated simplicial complexes to neural codes in order to study geometric and
combinatorial properties complementary to the homological questions pursued here.  For example,
\cite{IKR_hyperplane_polar} introduces a polar complex $\Gamma(\C)$  and uses
shellability to analyze stable hyperplane codes, while \cite{PMS_factor_complex} defines the factor
complex $\Delta_\cap(\C)$ to study interval structure and (max-)intersection-complete codes.  Neither
$\Gamma(\C)$ nor $\Delta_\cap(\C)$ agrees in general with the polar complex $\Delta_\C$ considered in
this paper.

Although $\P(J_\C)$ is a squarefree monomial ideal in $2n$ variables, it is far from arbitrary: it
satisfies the ambient constraint that no minimal generator is divisible by $x_i y_i$.  Homological
properties of squarefree monomial ideals satisfying this condition have been examined recently
in~\cite{chau2025neural}.  Our approach differs from~\cite{chau2025neural} in that we repeatedly
exploit additional structure coming from the fact that $\P(J_\C)$ arises by polarizing canonical
form generators, a hypothesis not imposed there.

A guiding theme of this work is that extremal and boundary behavior for $\pd(\P(J_\C))$ and
$\reg(\P(J_\C))$ is controlled by elementary combinatorial features of the \emph{Hamming cube} in which
the code $\C$ lives. Throughout, by the \emph{Hamming cube} we mean the $n$--dimensional hypercube
with vertex set $\F_2^n=\{0,1\}^n$, equipped with the Hamming distance
\[
d(u,v)=|\{i\in[n]:u_i\neq v_i\}|,
\]
so that two vertices are adjacent precisely when they differ in exactly one coordinate.

\begin{figure}[ht]
  \centering
  \scalebox{0.85}{
  \begin{minipage}{0.48\linewidth}
    \centering
    \begin{tikzpicture}[x=0.7cm,y=0.7cm]
    \draw[step=1,gray!65,line width=.35pt] (0,0) grid (6,7);
    \draw[very thick,-{Stealth[length=6pt]}] (0,0) -- (6.55,0) node[below right] {$\pd$};
    \draw[very thick,-{Stealth[length=6pt]}] (0,0) -- (0,7.55) node[above left] {$\reg$};

      \foreach \x in {0,1,2,3,4,5} { \node[below] at (\x,0) {\x}; }
      \foreach \y in {0,1,2,3,4,5,6,7} { \node[left]  at (-0.2,\y) {\y}; }

      \tikzset{dot/.style={circle, fill=black, inner sep=0pt, minimum size=8pt}}
      \foreach \p/\r in {
        0/1,0/2,0/3,
        1/1,1/2,1/3,1/4,1/5,
        2/1,2/3,2/4,
        3/3
      }{
        \node[dot] at (\p,\r) {};
      }
    \end{tikzpicture}
    \subcaption{$n=3$}
    \label{fig:n=3}
  \end{minipage}
  \hfill
 \begin{minipage}{0.48\linewidth}
    \centering
\begin{tikzpicture}[x=0.7cm,y=0.7cm]
  \draw[step=1,gray!65,line width=.35pt] (0,0) grid (6,7);
  \draw[very thick,-{Stealth[length=6pt]}] (0,0) -- (6.55,0) node[below right] {$\pd$};
  \draw[very thick,-{Stealth[length=6pt]}] (0,0) -- (0,7.55) node[above left] {$\reg$};

  \foreach \x in {0,...,5} \node[below] at (\x,0) {\x};
  \foreach \y in {0,...,7} \node[left]  at (-0.2,\y) {\y};

  \tikzset{dot/.style={circle,fill=black,inner sep=0pt,minimum size=8pt}}
  \foreach \p/\r in {
    5/3,
    4/3,4/4,4/5,
    3/1,3/3,3/4,3/5,3/6,
    2/1,2/2,2/3,2/4,2/5,2/6,
    1/1,1/2,1/3,1/4,1/5,1/6,1/7,
    0/1,0/2,0/3,0/4
  } \node[dot] at (\p,\r) {};
\end{tikzpicture}
    \subcaption{$n=4$}
    \label{fig:n=4}
  \end{minipage}
  }
  \caption{Achievable $(\pd,\reg)$ pairs for polarized neural ideals when $n=3,4$.}
  \label{fig:plots_3_4}
\end{figure}
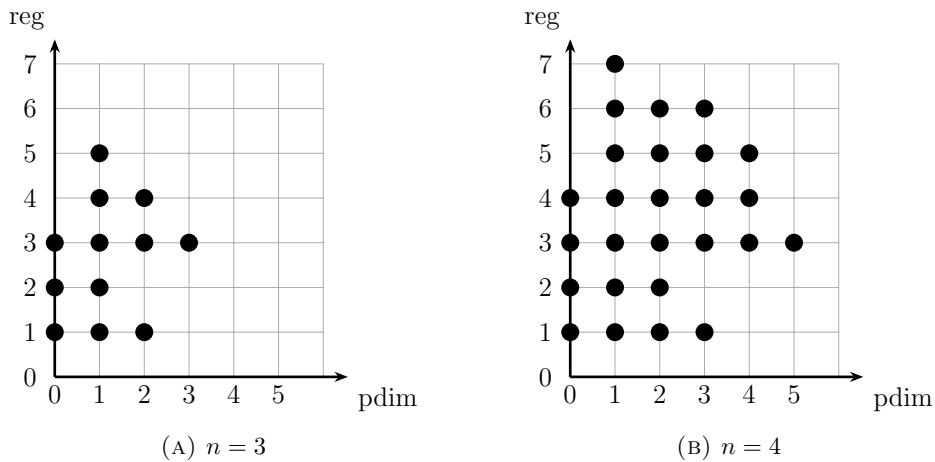

Our first results provide general constraints from the topology of $\Delta_\C$.
We prove that for every nonempty code $\C$, the polar complex $\Delta_\C$ is connected, and more
generally that every induced subcomplex on at least $n+1$ vertices is connected
(\Cref{lem:DW-connected-largeW,cor:polar-complex-connected}).  This threshold yields a sharp bound
$\pd(\P(J_\C))\le 2n-3$ for $n\geq 2$, and we exhibit broad families achieving equality
(\Cref{thm:pdim_upper_bound,thm:all-or-nothing,cor:antipodal_pairs_maxpdim}).  We also establish the
upper bound $\reg(\P(J_\C))\le 2n-1$ and classify the extremal case
\[
\reg(\P(J_\C))=2n-1 \quad\Longleftrightarrow\quad \C=\F_2^n\setminus\{v,\bar v\}
\]
(\Cref{lem:reg_upper_bound,thm:maxreg_classification}), as well as the opposite extreme $\reg=1$, which
occurs precisely for coordinate subcubes (\Cref{thm:reg1-classification}).

A recurring phenomenon is that Hamming-cube geometry controls both invariants at their extremes.
The smallest nonzero regularity, $\reg(\P(J_\C))=1$, occurs precisely when $\C$ is a coordinate subcube
(equivalently $\C=Q(\tau,\sigma)$ in the notation of \Cref{prop:pdim0_classification}).
By contrast, the smallest projective dimension, $\pd(\P(J_\C))=0$, occurs precisely when $\C$ is the complement
of a nonempty coordinate subcube, i.e.\ $\C=\F_2^n\setminus Q(\sigma,\tau)$
(\Cref{thm:reg1-classification,prop:pdim0_classification}). Moreover, these two boundary families are related by Alexander duality:  $\reg(\P(J_\C))=1$ iff $\P(J_\C)$ is generated by variables, while $\pd(\P(J_\C))=0$ iff $\P(J_\C)$ is principal, and Alexander duality swaps these two classes.
At the
other end, maximal regularity is forced by deleting an antipodal pair, whereas maximal projective
dimension is already attained by any antipodal pair code $\{v,\bar v\}$. We exploit this interplay
between fixing coordinates, taking complements of coordinate subcubes, and antipodality throughout the
paper, and these results support the broader theme that the homological invariants of $\P(J_\C)$
provide a useful organizing lens for neural codes.

Beyond extremal classifications, we give explicit constructions realizing large regions of the
$(\pd,\reg)$-plot for fixed $n$.  We develop operations (free-neuron and constant-neuron extensions)
that change $n$ while controlling $\pd$ and $\reg$ (\Cref{lem:free-neuron,lem:constant-neuron}).  We use
combinatorial patterns of missing codewords in the Hamming cube to control the topology
of $\Delta_\C$, hence the Betti numbers via Hochster’s formula.  In particular, we realize the full
region $\pd(\P(J_\C))+\reg(\P(J_\C))\le n$ (\Cref{thm:pr-band-realization}), and we obtain complete
classification/realizability results along the low lines $\pd=0,1$ and $\reg=1,2,3$
(\Cref{sec:small-pdim,sec:small-reg}).

\textbf{Organization.}
\Cref{sect:neural-codes-intro} reviews neural codes, neural ideals, canonical forms, polarization, and
the polar complex.  \Cref{sec:toolkit} records code operations controlling $\pd$ and $\reg$ and \Cref{subsec:polar-connectedness} proves
connectedness properties of $\Delta_\C$.  \Cref{sect:results} establishes the sharp upper bound
$\pd\le 2n-3$, and \Cref{sec:regularity} studies regularity, including the maximal-regularity
classification.  \Cref{sec:small-pdim,sec:small-reg} treat the low lines $\pd=0,1$ and $\reg=1,2,3$,
and \Cref{sec:last} gives the simplicial-sphere construction realizing $\pd+\reg\le n$.
\Cref{sec:further-directions} concludes with open directions.

\section{Preliminaries}\label{sect:neural-codes-intro}

This section collects background and notation used throughout the paper. We follow the algebraic
framework of~\cite{smb}, which introduces the neural ideal and its canonical form as a minimal
algebraic encoding of receptive-field (RF) relations. We then recall polarization of neural ideals
and the associated \emph{polar complex} from ~\cite{gjs}.

\begin{setup}\label{setup:prelim}
Fix a field $k$. For $n\ge 1$ set $R:=k[x_1,\dots,x_n]$ and
$S:=k[x_1,\dots,x_n,y_1,\dots,y_n]$, both standard $\Bbb Z$-graded with
$\deg(x_i)=\deg(y_i)=1$. We write $[n]=\{1,\dots,n\}$ and identify $\F_2^n=\{0,1\}^n\subset k^n$.
\end{setup}

\subsection{Neural codes and realizations}

Many of the codes we study arise from a family of receptive fields $\U=\{U_1,\dots,U_n\}$ in a
stimulus space $X$.  The resulting code records which firing patterns occur and, equivalently, which
set-theoretic containments among intersections $\bigcap_{i\in\sigma}U_i$ and unions $\bigcup_{j\in\tau}U_j$
are forced by the absence of certain patterns.

\begin{definition}
A \emph{neural code} on $n$ neurons is a subset $\C\subseteq \F_2^n$ whose elements
$\cw=(\cw_1,\dots,\cw_n)$ are called \emph{codewords}. The $i$th coordinate $\cw_i$ indicates
whether neuron $i$ fires ($\cw_i=1$) or does not fire ($\cw_i=0$).
\end{definition}

\begin{definition}\label{def:code_of_realization}
Let $X\neq\emptyset$ and $\U=\{U_1,\dots,U_n\}$ with $U_i\subseteq X$.
The \emph{code of $\U$} is
\[
\C(\U) := \left\{\cw\in\F_2^n: 
\left(\bigcap_{\cw_i=1}U_i\right)\setminus\left(\bigcup_{\cw_j=0}U_j\right)\neq\emptyset
\right\}.
\]
If $\C=\C(\U)$ for some $\U$, we say $\U$ is a \emph{realization} of $\C$.
\end{definition}

\begin{figure}[ht]
    \centering
    \begin{subfigure}[b]{0.5\textwidth}
        \centering
            \includegraphics[width = 5cm]{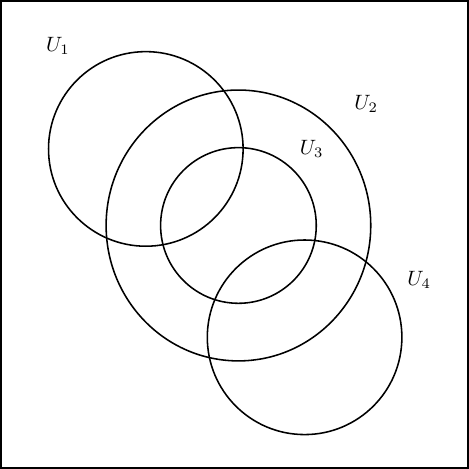}
        \caption{Collection $\U = \{U_1,U_2,U_3,U_4\}$}
        \label{fig-1-a}
    \end{subfigure}%
    ~
    \begin{subfigure}[b]{0.5\textwidth}
        \centering
        \includegraphics[width = 5cm]{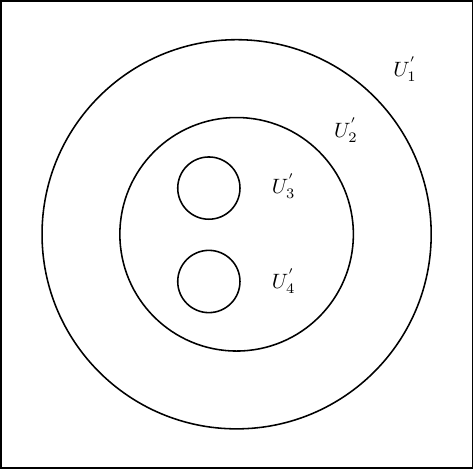}
        \caption{Collection $\U' = \{U_1',U_2',U_3',U_4'\}$}
        \label{fig-1-b}
    \end{subfigure}
    \caption{Two collections of receptive fields in $\R^2$.}
    \label{fig:ex1}
\end{figure}

\begin{ex}
  For the collections $\U$ and $\U'$ in \Cref{fig:ex1}, the associated codes are
\begin{align*}
\C(\U) &=\{0000,1000,0100,0001,1100,1110,0101,0111\}, \\
\C(\U')&=\{0000,1000,1100,1101,1110\}.
\end{align*}  
\end{ex}

Although a code $\C$ can have many realizations, the code determines a collection of RF relations
that must hold in \emph{every} realization.  A basic family of such relations are containments of the form
\[
\bigcap_{i\in\sigma}U_i \subseteq \bigcup_{j\in\tau}U_j,
\qquad \sigma,\tau\subseteq[n].
\]

\subsection{Neural ideals and RF relations}\label{sect:neural-ideal}

Neural ideals provide an algebraic way to record RF containments. We work in the polynomial ring
$R=k[x_1,\dots,x_n]$.

\begin{definition}\label{def:neural_ideal}
Let $\C\subseteq\F_2^n$ be a neural code. For $v\in\F_2^n$ define its \emph{characteristic
pseudo-monomial}
\[
\rho_v := \prod_{v_i=1}x_i\prod_{v_j=0}(1-x_j)\in R.
\]
The \emph{neural ideal} of $\C$ is
\[
J_\C := \langle \rho_v : v\in\F_2^n\setminus \C\rangle \subseteq R.
\]
\end{definition}

\begin{remark}\label{rem:boolean-zeroset}
For each $v\in\F_2^n$, the pseudo-monomial $\rho_v$ satisfies $\rho_v(v)=1$ and $\rho_v(w)=0$
for all $w\in\F_2^n$ with $w\neq v$. Consequently, the zero set of $J_\C$ on $\F_2^n$ is
\[
V(J_\C)=\{w\in\F_2^n:\ f(w)=0\ \text{for all}\ f\in J_\C\}=\C.
\]
See also~\cite[Lemma~3.2]{smb}.
\end{remark}

\begin{notation}
For $\sigma\subseteq[n]$, write
\[
U_{\sigma}:=\bigcap_{i\in \sigma} U_i
\qquad \text{and} \qquad
x_\sigma:=\prod_{i\in\sigma}x_i.
\]
\end{notation}

The following theorem explains how pseudo-monomials encode RF containments.

\begin{theorem}\label{thm:rf-neural-ideal}\cite{smb}
Let $\C\subseteq\F_2^n$ and let $\U=\{U_1,\dots,U_n\}$ be a realization in a stimulus space $X$.
Fix $\sigma,\tau\subseteq[n]$ with $\sigma\cap\tau=\emptyset$. Then
\[
x_\sigma\prod_{j\in\tau}(1-x_j)\in J_\C
\iff
\bigcap_{i\in\sigma}U_i \subseteq \bigcup_{j\in\tau}U_j.
\]
\end{theorem}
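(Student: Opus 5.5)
We prove both directions by passing through the zero set of $J_\C$ on $\F_2^n$, using \Cref{rem:boolean-zeroset}, which identifies this zero set with $\C$. Set $f:=x_\sigma\prod_{j\in\tau}(1-x_j)$, with $\sigma\cap\tau=\emptyset$.

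The first step translates the set-theoretic containment into a statement about codewords. For each point $p\in X$, let $c(p)\in\F_2^n$ be its firing pattern, with $c(p)_i=1$ iff $p\in U_i$. By \Cref{def:code_of_realization}, every $c(p)$ lies in $\C$. Conversely, every codeword of $\C$ arises as $c(p)$ for some $p$, because the defining atom is nonempty. Now $\bigcap_{i\in\sigma}U_i\subseteq\bigcup_{j\in\tau}U_j$ fails exactly when some $p$ has $c(p)_i=1$ for all $i\in\sigma$ and $c(p)_j=0$ for all $j\in\tau$. Such a $p$ exists iff some $c\in\C$ has $c_i=1$ for $i\in\sigma$ and $c_j=0$ for $j\in\tau$. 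This is equivalent to $f(c)=1$, since $f(c)\in\{0,1\}$ on Boolean points. Hence the containment holds iff $f$ vanishes on all of $\C$.

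The second step, the algebraic heart, shows that $f\in J_\C$ iff $f$ vanishes on $\C$. The forward direction is immediate from \Cref{rem:boolean-zeroset}: every generator $\rho_v$ with $v\notin\C$ vanishes on $\C$, so every element of $J_\C$ does.

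For the converse, expand $f$ over the free coordinates. Set $\mu:=[n]\setminus(\sigma\cup\tau)$ and multiply $f$ by
\[
1=\prod_{k\in\mu}\bigl(x_k+(1-x_k)\bigr).
\]
This expresses $f$ as a sum of $\rho_v$ over all $v\in\F_2^n$ with $v_i=1$ on $\sigma$ and $v_j=0$ on $\tau$. Each such $v$ satisfies $f(v)=1$. Since $f$ vanishes on $\C$, no such $v$ lies in $\C$. So each term is a generator of $J_\C$, and therefore $f\in J_\C$.

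The main subtlety is exactly this converse. A polynomial vanishing on $\C$ need not lie in $J_\C$ without the Boolean relations $x_i(1-x_i)$. The explicit expansion avoids that issue, because it writes $f$ as a genuine $R$-combination of generators with no relations needed. Everything else is bookkeeping with \Cref{def:code_of_realization}.
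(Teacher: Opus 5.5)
Your proof is correct and complete. There is no in-paper proof to compare it with: the paper quotes this statement from \cite{smb} and uses it without proof.

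Step 1 is right. Every firing pattern $c(p)$ lies in $\C$, and every codeword arises as some $c(p)$ because its atom is nonempty. So the containment $\bigcap_{i\in\sigma}U_i\subseteq\bigcup_{j\in\tau}U_j$ is equivalent to $f$ vanishing on $\C$, and this still holds when $\sigma$ or $\tau$ is empty under the stated conventions.

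Step 2 is also right. Your identity $x_\sigma\prod_{j\in\tau}(1-x_j)=\sum_{v\in Q(\sigma,\tau)}\rho_v$ is the same subcube-summing trick the paper uses elsewhere, for example in \Cref{thm:all-or-nothing}, in \Cref{prop:pdim0_classification} $(4)\Rightarrow(3)$, and in \Cref{thm:reg1-classification} $(3)\Rightarrow(2)$. The parenthetical remark in the proof of \Cref{cor:antipodal_pairs_maxpdim} is essentially your Step 2 applied to that special case.

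Your caution about the converse is well placed. The paper works over an arbitrary field $k$, where $J_\C$ is not the full vanishing ideal of $\C$. Your expansion writes $f$ as an honest $R$-combination of generators and uses only $0\neq 1$ in $k$, so it gives the converse without invoking the Boolean relations $x_i(1-x_i)$.
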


\begin{notation}
We use the conventions
\[
\bigcap_{i\in\emptyset}U_i=X
\qquad\text{and}\qquad
\bigcup_{j\in\emptyset}U_j=\emptyset.
\]
\end{notation}

\subsection{Canonical form, and Type~1--3 relations}\label{sect:cf}

Neural ideals are generated by many characteristic pseudo-monomials, and it is useful to extract a
generating set capturing the essential RF relations. In~\cite{smb}, Curto et al.\ introduced the
\emph{canonical form} of a neural ideal and organized its relations into three types.

\begin{definition}\label{def:canonical-form}
Let $I\subseteq R$ be an ideal.
\begin{itemize}
\item A \emph{pseudo-monomial} is a polynomial of the form
\[
x_\sigma\prod_{j\in\tau}(1-x_j),
\qquad \sigma,\tau\subseteq[n],\ \sigma\cap\tau=\emptyset.
\]
\item A pseudo-monomial $f\in I$ is \emph{minimal} if no proper divisor of $f$ is contained in $I$.
\item The \emph{canonical form} $CF(I)$ is the set of all minimal pseudo-monomials in $I$.
\end{itemize}
\end{definition}

\begin{theorem}[{\cite[Theorem~4.3]{smb}}]\label{thm:curto_CF}
For any neural code $\C\subseteq\F_2^n$, the neural ideal $J_\C$ is generated by its canonical form:
\[
J_\C=\langle CF(J_\C)\rangle.
\]
Moreover, $CF(J_\C)$ decomposes as a disjoint union of minimal relations of three types:
\begin{itemize}
\item \textbf{Type 1:} monomials $x_\sigma$, encoding $U_\sigma=\emptyset$;
\item \textbf{Type 2:} pseudo-monomials $x_\sigma\prod_{i\in\tau}(1-x_i)$, encoding
$U_\sigma\subseteq \bigcup_{i\in\tau}U_i$;
\item \textbf{Type 3:} pseudo-monomials $\prod_{i\in\tau}(1-x_i)$, encoding
$X\subseteq \bigcup_{i\in\tau}U_i$.
\end{itemize}
% In each case, minimality in $CF(J_\C)$ means that the associated containment fails after replacing
% $\sigma$ and/or $\tau$ by a proper subset.
\end{theorem}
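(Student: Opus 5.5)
The plan is to obtain both statements directly from the definitions, by a divisibility argument inside the unique factorization domain $R$. First I will pin down the proper divisors of a pseudo-monomial. The linear forms $x_1,\dots,x_n,1-x_1,\dots,1-x_n$ are pairwise non-associate irreducible elements of $R$. So by unique factorization, every divisor of $f=x_\sigma\prod_{j\in\tau}(1-x_j)$ is a scalar multiple of $x_{\sigma'}\prod_{j\in\tau'}(1-x_j)$ for some $\sigma'\subseteq\sigma$ and $\tau'\subseteq\tau$. In particular, the pseudo-monomial divisors of $f$ (taken up to scalars) form the finite Boolean lattice of pairs $(\sigma',\tau')$ ordered by inclusion, and the proper ones are those with $(\sigma',\tau')\neq(\sigma,\tau)$.

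Next comes the generation statement $J_\C=\langle CF(J_\C)\rangle$. The inclusion $\supseteq$ is immediate, since $CF(J_\C)$ consists of elements of $J_\C$. For $\subseteq$ it suffices to treat the generators $\rho_v$ with $v\notin\C$. Each such $\rho_v$ is a pseudo-monomial in $J_\C$ (take $\sigma=\{i:v_i=1\}$ and $\tau=\{j:v_j=0\}$). Among the finitely many pseudo-monomial divisors of $\rho_v$ that lie in $J_\C$, I choose one, say $g=x_{\sigma'}\prod_{j\in\tau'}(1-x_j)$, with $|\sigma'|+|\tau'|$ minimal. Any proper pseudo-monomial divisor of $g$ is also a divisor of $\rho_v$ with strictly smaller size, so by this choice it does not lie in $J_\C$. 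Hence $g$ is minimal, so $g\in CF(J_\C)$ and $\rho_v\in\langle g\rangle$. This gives $J_\C\subseteq\langle CF(J_\C)\rangle$.

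For the decomposition into types, I sort each $f=x_\sigma\prod_{j\in\tau}(1-x_j)\in CF(J_\C)$ by which of $\sigma$ and $\tau$ is empty. If $\tau=\emptyset$ it is Type~1, if both are nonempty it is Type~2, and if $\sigma=\emptyset\neq\tau$ it is Type~3. These cases are mutually exclusive, so the union is disjoint. The degenerate case $\sigma=\tau=\emptyset$ means $1\in J_\C$. By \Cref{rem:boolean-zeroset} this happens only when $\C=\emptyset$, and then $CF(J_\C)=\{1\}$, which I file under Type~1 with $\sigma=\emptyset$. The receptive-field interpretations follow by applying \Cref{thm:rf-neural-ideal} to each type with the stated conventions for empty intersections and unions. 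For example, Type~1 gives $U_\sigma\subseteq\bigcup_{\emptyset}=\emptyset$, and Type~3 gives $X=U_\emptyset\subseteq\bigcup_{i\in\tau}U_i$.

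The only step needing real care is the first one: that the pseudo-monomial divisors of a pseudo-monomial are exactly the subproducts. This needs $x_i$ and $1-x_i$ to be non-associate irreducibles, which holds over any field $k$. Everything else is a finite minimality argument, so I expect no serious obstacle beyond writing the divisor description cleanly and handling the empty-code case.
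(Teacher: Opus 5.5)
Your proof is correct. The paper gives no proof of this statement; it is quoted from \cite{smb}. Your argument is the standard one.

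For generation, you pick, for each noncodeword $v$, a pseudo-monomial divisor of $\rho_v$ lying in $J_\C$ of minimal size. That is exactly the device the paper itself uses later, in the proof of \Cref{thm:reg1-classification} and in the step (2)$\Rightarrow$(3) of \Cref{thm:maxreg_classification}. You then sort the elements of $CF(J_\C)$ by which of $\sigma,\tau$ is empty and read off the containments from \Cref{thm:rf-neural-ideal}.

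Compared with \cite{smb}, which works over $\F_2$ and states the canonical form as an explicit list of receptive-field relations for a given realization, your unique-factorization step buys two things:
\begin{itemize}
\item it makes explicit why the argument runs over an arbitrary field $k$, as in this paper;
\item it shows that $(\sigma,\tau)$ is determined by $f$, so the type assignment is well defined and the union is disjoint.
\end{itemize}

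You leave one point implicit: the word ``minimal'' in ``minimal relations.'' The containment $U_\sigma\subseteq\bigcup_{i\in\tau}U_i$ fails when $\sigma$ or $\tau$ is replaced by a proper subset. This follows at once by applying \Cref{thm:rf-neural-ideal} to the proper subproducts of $f$, none of which lie in $J_\C$, as recorded in \Cref{rem:minimal_rf}.
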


\begin{remark}\label{rem:minimal_rf}
If $f=x_\sigma\prod_{j\in\tau}(1-x_j)\in CF(J_\C)$, then the corresponding containment
$U_\sigma\subseteq \bigcup_{j\in\tau}U_j$ holds, but it fails if $\sigma$ or $\tau$ is replaced by a
proper subset. Equivalently, $f$ is minimal among pseudo-monomials in $J_\C$ under divisibility.
See \cite[Theorem~4.3]{smb}.
\end{remark}

\begin{remark}\label{rem:CF-not-minimal}
The canonical form $CF(J_\C)$ is a canonical generating set by pseudo-monomials: its elements
are minimal under divisibility among pseudo-monomials in $J_\C$, and $J_\C=\langle CF(J_\C)\rangle$.
However, this does \emph{not} imply that $CF(J_\C)$ is a minimal generating set of $J_\C$ as an ideal
in the usual sense, since $J_\C$ may admit a generating set with fewer elements
once arbitrary polynomials (not necessarily pseudo-monomials) are allowed.
\end{remark}

\subsection{Polarization and the polar complex}\label{sect:neural-polarization}

Neural ideals need not be homogeneous.
To access graded invariants (graded Betti numbers, regularity, projective dimension), one replaces
each factor $(1-x_i)$ by a new variable $y_i$, producing a squarefree monomial ideal.

\begin{definition}[{\cite{gjs}}]\label{def:polarization}
Let $f=x_\sigma\prod_{j\in\tau}(1-x_j)$ be a pseudo-monomial with $\sigma\cap\tau=\emptyset$.
Its \emph{polarization} is the squarefree monomial
\[
\P(f) := x_\sigma y_\tau \in S.
\]
Then  the \emph{polarized neural ideal} is
\[
\P(J_\C) := \langle \P(f) :  f\in J_\C \text{ is a pseudo-monomial }\rangle \subseteq S.
\]
\end{definition}

\begin{theorem}\cite[Theorem 3.2]{gjs}
Let $J_\C=\langle f_1,\dots,f_k\rangle$ be a neural ideal  in canonical form, i.e.\
$\{f_1,\dots,f_k\}=CF(J_\C)$. Then 
\[
\P(J_\C)=\langle \P(f_1),\dots,\P(f_k)\rangle \subseteq S.
\]
\end{theorem}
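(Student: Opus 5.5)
The plan is to prove the two inclusions separately. Write $I:=\langle \P(f_1),\dots,\P(f_k)\rangle$ for the ideal generated by the polarized canonical form generators, and compare it with $\P(J_\C)$ as given in \Cref{def:polarization}.

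The inclusion $I\subseteq\P(J_\C)$ is immediate. Each $f_i\in CF(J_\C)$ is a pseudo-monomial lying in $J_\C$, so $\P(f_i)$ is one of the generators of $\P(J_\C)$.

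For the reverse inclusion, it suffices to show that for every pseudo-monomial $g=x_\sigma\prod_{j\in\tau}(1-x_j)\in J_\C$, the monomial $\P(g)=x_\sigma y_\tau$ is divisible by some $\P(f_i)$. I will first reduce to a minimal pseudo-monomial. The set of pseudo-monomials in $J_\C$ that divide $g$ is finite and contains $g$. Choose one, $h$, that is minimal under divisibility among pseudo-monomials in $J_\C$. By \Cref{def:canonical-form}, $h\in CF(J_\C)$, so $h=f_i$ for some $i$. The pseudo-monomial divisors of $g$ are exactly the $x_{\sigma'}\prod_{j\in\tau'}(1-x_j)$ with $\sigma'\subseteq\sigma$ and $\tau'\subseteq\tau$. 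This holds because the factors $x_i$ and $1-x_j$ are pairwise non-associate primes in $R$, and $R$ is a UFD. Hence $\P(h)=x_{\sigma'}y_{\tau'}$ divides $x_\sigma y_\tau=\P(g)$, which gives $\P(g)\in I$.

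The only point that needs care is that "minimal" in \Cref{def:canonical-form} means no proper pseudo-monomial divisor lies in $J_\C$. A minimal element of the finite divisor poset described above satisfies exactly this condition, so $h$ qualifies. I must also confirm that divisibility of pseudo-monomials in $R$ corresponds to containment of the index sets $(\sigma,\tau)$; this is the unique factorization argument already given. Combining the two inclusions yields $\P(J_\C)=\langle \P(f_1),\dots,\P(f_k)\rangle$.
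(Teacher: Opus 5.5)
Your proof is correct. The paper gives no proof of its own here, since the statement is quoted from \cite{gjs}, and yours is the standard argument. The inclusion $\supseteq$ holds by definition. For $\subseteq$ you pass from a pseudo-monomial $g\in J_\C$ to a divisibility-minimal pseudo-monomial divisor of $g$ lying in $J_\C$; that divisor belongs to $CF(J_\C)$, and its polarization divides $\P(g)$. This is exactly the minimal-divisor step the paper uses in the proofs of \Cref{thm:maxreg_classification} ((2)$\Rightarrow$(3)) and \Cref{thm:reg1-classification}.

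Your unique-factorization remark also handles the fact that \Cref{def:canonical-form} is phrased with arbitrary proper divisors rather than pseudo-monomial ones. Every divisor of a pseudo-monomial is a nonzero scalar times a sub-pseudo-monomial, so the two notions of minimality agree.
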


We say that the polarized neural ideal $\P(J_\C)$ is in canonical form if it is written as
\[
\P(J_\C)=\langle \P(f_1),\dots,\P(f_k)\rangle. 
\]

\begin{remark}
It is shown in~\cite[Theorem~3.5 and Corollary~4.1]{gjs} that polarization preserves the algebraic
structure of the neural ideal in a precise sense: one can recover a free resolution of $J_\C$ from a
minimal free resolution of $\P(J_\C)$ by substituting $y_i\mapsto (1-x_i)$ in the presentation matrices.
\end{remark}

Since $\P(J_\C)$ is a squarefree monomial ideal, it corresponds to a simplicial complex via
Stanley--Reisner theory.

\begin{notation}\label{not:polar-complex}
Let $\C\subseteq\F_2^n$. We write $\Delta_\C$ for the simplicial complex whose Stanley--Reisner ideal is $\P(J_\C)\subseteq S$.
We call $\Delta_\C$ the \emph{polar complex} of $\C$ as in ~\cite{gjs}.
\end{notation}

\begin{definition}\label{def:transversal_monomial}
A squarefree monomial $m\in S$ is \emph{transversal} if $x_i y_i\nmid m$ for all $i\in[n]$.
Equivalently, for each $i$, the monomial $m$ contains at most one of $x_i$ and $y_i$.

For $v\in\F_2^n$ with $\sigma(v)=\{i\in[n]: v_i=1\}$, define the associated transversal monomial
\[
m_v  :=  x_{\sigma(v)} y_{[n]\setminus\sigma(v)}
 = \prod_{i\in\sigma(v)} x_i \prod_{i\in[n]\setminus\sigma(v)} y_i.
\]
\end{definition}

\subsection{Graded Betti numbers, projective dimension, and regularity}\label{subsec:betti-pd-reg}

Let $S$ be a standard $\Bbb Z$-graded polynomial ring,
and $M$ be a finitely generated $\Bbb Z$-graded $S$-module. A \emph{minimal graded free resolution}
of $M$ has the form
\[
0 \longrightarrow \bigoplus_j S(-j)^{\beta_{\ell,j}(M)}
\longrightarrow \cdots \longrightarrow
\bigoplus_j S(-j)^{\beta_{0,j}(M)}
\longrightarrow M \longrightarrow 0,
\]
where  the integers $\beta_{i,j}(M)$ are the \emph{graded Betti numbers}.
Equivalently,
\[
\beta_{i,j}(M)=\dim_k \Tor_i^S(M,k)_j .
\]

The \emph{projective dimension} and \emph{Castelnuovo--Mumford regularity} of $M$ are defined by
\[
\pd(M):=\max\{  i : \beta_{i,j}(M)\neq 0 \},
\qquad
\reg(M):=\max\{  j-i : \beta_{i,j}(M)\neq 0 \}.
\]
Throughout the paper, if $I\subseteq S$ is a homogeneous ideal, we regard $I$ as a graded $S$-module
and apply these definitions to $M=I$.

In this paper, we compute graded Betti numbers of $\P(J_\C)$ using Hochster’s formula.

\begin{theorem}[Hochster's formula]\label{thm:hochster}
Let $\Delta$ be a simplicial complex on the vertex set $V=\{x_1,\dots,x_m\}$ with
Stanley--Reisner ideal $I_\Delta\subseteq k[x_1,\dots,x_m]$. Then for all $i,j$,
\[
\beta_{i,j}(I_\Delta)
=\sum_{\substack{W\subseteq V\\|W|=j}}
\dim_k \widetilde H_{j-i-2}(\Delta_W;k),
\]
where $\Delta_W$ is the induced subcomplex on $W$ and $\widetilde H_\ast$ denotes reduced simplicial
homology.
\end{theorem}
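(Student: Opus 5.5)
This is Hochster's formula, a standard result, so the plan is to cite its classical derivation rather than reprove it from scratch. I would compute $\Tor_i^S(I_\Delta,k)$ in two ways and compare multigraded pieces.

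\emph{Setting up the Koszul complex.} Write $I_\Delta$ for the ideal and use the short exact sequence $0\to I_\Delta\to S\to S/I_\Delta\to 0$. It gives $\Tor_i^S(I_\Delta,k)\cong \Tor_{i+1}^S(S/I_\Delta,k)$ for all $i\ge 0$. So it suffices to show
\[
\beta_{i+1,\mathbf a}(S/I_\Delta)=\dim_k\widetilde H_{|W|-i-2}(\Delta_W;k)
\]
for each squarefree multidegree $\mathbf a$ with support $W$, and that non-squarefree multidegrees contribute zero. To compute this Tor, I would tensor the Koszul resolution $K_\bullet$ of $k$ over $S$ with $S/I_\Delta$. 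Then $\Tor_i^S(S/I_\Delta,k)=H_i(K_\bullet\otimes S/I_\Delta)$, and this complex is $\Bbb Z^m$-graded.

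\emph{Analyzing a single multidegree.} Fix $\mathbf a\in\Bbb N^m$. A basis of the degree-$\mathbf a$ part of $K_j\otimes S/I_\Delta$ is given by pairs $(F,\mathbf x^{\mathbf a-\mathbf e_F})$, where $F\subseteq \supp\mathbf a$, $|F|=j$, and the monomial $\mathbf x^{\mathbf a-\mathbf e_F}$ is nonzero in $S/I_\Delta$, meaning its support is a face of $\Delta$.

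If $\mathbf a$ is not squarefree, I would show the strand is exact, for instance via a contracting homotopy built from a variable $x_\ell$ with $a_\ell\ge2$. The standard choice is to multiply or divide by $e_\ell$, which is well defined because the support does not change.

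If $\mathbf a=\mathbf e_W$, the degree-$\mathbf a$ strand has basis the sets $F\subseteq W$ with $W\setminus F\in\Delta_W$. Sending $F\mapsto W\setminus F$ identifies this strand, up to signs, with the augmented simplicial cochain complex of $\Delta_W$, with indices reversed: $K_j$ corresponds to $\widetilde C^{|W|-j-1}(\Delta_W)$. Hence $H_j=\widetilde H^{|W|-j-1}(\Delta_W;k)$. Over a field this has the same dimension as $\widetilde H_{|W|-j-1}(\Delta_W;k)$.

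\emph{Assembling the formula.} Set $j=i+1$ and sum over $|W|=j'$. This gives $\beta_{i,j'}(I_\Delta)=\sum_{|W|=j'}\dim\widetilde H_{j'-i-2}(\Delta_W)$, which is the stated formula.

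The main obstacle is the sign bookkeeping in identifying the Koszul differential with the coboundary map, together with the exactness argument in non-squarefree degrees. Since this is the classical Hochster formula (e.g.\ Miller--Sturmfels, Cor.~5.12), I would defer these checks to that reference.
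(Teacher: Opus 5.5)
Your proposal is correct. The non-squarefree strands are exact via the homotopy $x_\ell^{-1}(e_\ell\wedge -)$, and the $\mathbf e_W$-strand is the reindexed augmented cochain complex of $\Delta_W$, which is the standard derivation. The paper gives no proof of its own and simply quotes Hochster's formula as a known result, so deferring the sign bookkeeping to the literature matches its treatment.

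One point worth keeping from your argument: it runs over every squarefree multidegree of $S$, so it automatically covers ghost vertices. These are variables lying in $I_\Delta$, which occur for $\P(J_\C)$ whenever a coordinate is fixed on $\C$. In that case $W$ must range over all of $V$, not just over the actual vertices of $\Delta$.
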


We  repeatedly use the following standard fact about adjoining new variables.

\begin{lemma}\label{lem:add_variable_pd_reg}
Let $M$ be a finitely generated graded $S$-module,
and let $z$ be a new variable of degree $1$. Set $S':=S[z]$ and $M':=M\otimes_S S'$.
Then
\[
\pd_{S'}(M')=\pd_S(M)
\qquad\text{and}\qquad
\reg_{S'}(M')=\reg_S(M).
\]
Moreover, for any proper homogeneous ideal $J\subsetneq S$, writing $J S'\subseteq S'$ for its extension,
one has
\[
\pd_{S'}(J S'+(z))=\pd_{S}(J)+1,
\qquad\text{and}\qquad
\reg_{S'}(J S'+(z))=\reg_{S}(J).
\]
\end{lemma}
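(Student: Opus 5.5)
The first claim, $\pd_{S'}(M')=\pd_S(M)$ and $\reg_{S'}(M')=\reg_S(M)$, follows from flat base change. Since $S\to S'=S[z]$ is flat, tensoring a minimal graded free resolution $F_\bullet\to M$ over $S$ with $S'$ yields an exact complex $F_\bullet\otimes_S S'\to M'$ of graded free $S'$-modules. Its differentials still have entries in the homogeneous maximal ideal, so it is minimal. The ranks and shifts are unchanged, hence $\beta^{S'}_{i,j}(M')=\beta^S_{i,j}(M)$ for all $i,j$, and both equalities follow.

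For the second claim, set $I:=JS'$. Then $z$ is a nonzerodivisor on $S'/I\cong (S/J)[z]$, and $I\cap (z)=zI$ because $I$ is extended from $S$. The key tool is a tensor product of resolutions. Let $G_\bullet$ be the minimal resolution of $S'/I$, obtained from that of $S/J$ by the first part. Let $K_\bullet: 0\to S'(-1)\xrightarrow{z} S'\to 0$ be the Koszul resolution of $S'/(z)$. Then $G_\bullet\otimes_{S'}K_\bullet$ resolves $S'/(I+(z))\cong S/J$: the higher homology is $\Tor^{S'}_i(S'/I,S'/(z))$, which vanishes for $i>0$ since $z$ is regular on $S'/I$. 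This tensor complex is minimal, since its entries come from $G$ and from $z$.

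Reading off Betti numbers gives
\[
\beta^{S'}_{i,j}(S'/(I+(z)))=\beta^{S}_{i,j}(S/J)+\beta^{S}_{i-1,j-1}(S/J).
\]
Now pass from quotients to ideals by the index shift $\beta_{i,j}(L)=\beta_{i+1,j}(S'/L)$, valid for any proper homogeneous ideal $L$. This yields
\[
\beta^{S'}_{i,j}(I+(z))=\beta^S_{i,j}(J)+\beta^S_{i,j-1}(J) \quad (i\ge 1),
\]
where the second term is to be read as $\beta^S_{i,j-1}(J)$ interpreted via $S/J$. I will keep the bookkeeping in quotient form to avoid errors. In quotient form, $\pd(S'/(I+(z)))=\pd(S/J)+1$ and $\reg(S'/(I+(z)))=\reg(S/J)$, because the second summand has both homological degree and internal degree shifted by one. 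Since $\pd(L)=\pd(S/L)-1$ and $\reg(L)=\reg(S/L)+1$ for $L\neq 0$ proper, the ideal statements follow.

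The one case requiring separate care is $J=0$, where $\pd(J)$ is undefined or $-\infty$ and the quotient-to-ideal conversion breaks down. I will assume $J\ne 0$ as in the intended applications, or else adopt the convention that the formula reads $(z)$ with $\pd=0$ and $\reg=1$. The main obstacle is only keeping the index shifts straight; otherwise the argument is routine.
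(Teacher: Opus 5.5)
The paper gives no proof of this lemma; it cites it as a standard fact. Your argument is the standard one and it is correct. Flat base change handles the first part. For the second part, $z$ is regular on $S'/JS'\cong (S/J)[z]$, so $G_\bullet\otimes_{S'}K_\bullet$ is a minimal resolution of $S'/(JS'+(z))$.

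Two pieces of bookkeeping need attention.

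First, your intermediate display for the ideal is mislabeled. From $\beta^{S'}_{i,j}(S'/(JS'+(z)))=\beta^S_{i,j}(S/J)+\beta^S_{i-1,j-1}(S/J)$ and the shift $\beta_{i,j}(L)=\beta_{i+1,j}(S/L)$ one gets the following.
\begin{itemize}
\item For $i\ge 1$: $\beta^{S'}_{i,j}(JS'+(z))=\beta^S_{i,j}(J)+\beta^S_{i,j-1}(S/J)=\beta^S_{i,j}(J)+\beta^S_{i-1,j-1}(J)$.
\item For $i=0$: $\beta^{S'}_{0,j}(JS'+(z))=\beta^S_{0,j}(J)+\delta_{j,1}$, the extra term coming from the new generator $z$.
\end{itemize}
So the second summand is not $\beta^S_{i,j-1}(J)$. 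You derive both conclusions in quotient form, and that derivation is right. You should therefore either correct that line or delete it.

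Second, your $J=0$ caveat identifies a genuine defect in the statement, not in your proof. For $J=0$ one has $\reg_{S'}((z))=1$, while $\reg_S(0)=-\infty$ and $\pd_S(0)$ is $-\infty$ or undefined. So the ``moreover'' part really requires $J\neq 0$. The same hypothesis is exactly what makes your conversion $\reg(L)=\reg(S/L)+1$ valid: it needs $\reg(L)\ge 1$ to dominate the contribution of $\beta_{0,0}(S/L)$. The paper is unaffected, because every application of the second part (constant neurons, the $\reg=2$ bound, the $\reg=3$ construction) extends a nonzero ideal. The paper assumes $\P(J_\C)\neq 0$ throughout, and it notes $I'\neq 0$ and $I_A\neq 0$ explicitly where needed.
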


\section{Basic operations on codes}\label{sec:toolkit}

This section records conventions and two elementary operations on neural codes that  will be used
throughout the paper.  The first adjoins a \emph{free neuron} (a new coordinate that can be chosen
independently), and the second adjoins a \emph{constant neuron} (a new coordinate fixed to $0$ or $1$).
On the algebraic side, these correspond to extension by new variables, and (in the constant
case) adding a new linear generator.  In particular, these operations allow us to vary the number of
neurons while controlling $(\pd,\reg)$.

\begin{setup}\label{setup:polarized-canonical}
Let $\C\subseteq\F_2^n$ be a neural code on $n$ neurons.  Write $J_\C\subseteq R$ for its neural ideal,
and let
\[
I  =  \P(J_\C) \subseteq  S
\]
denote the polarized neural ideal.  When no confusion can arise, we write $\Delta=\Delta_\C$ for the
polar complex of $\C$, i.e.\ the Stanley--Reisner complex of $I$.
\end{setup}

\begin{remark}\label{rem:basic_degree_bounds}
Let $\C\subseteq\F_2^n$ with polarized neural ideal $I=\P(J_\C)$ and polar complex $\Delta=\Delta_\C$.
\begin{enumerate}[label=(\alph*)]
\item No minimal generator of $I$ is divisible by $x_i y_i$.

\item If $x_i,y_i\in I$, then depolarization gives $x_i, 1-x_i\in J_\C$. Hence $1\in J_\C$.
Thus $J_\C=R$ and $\C=V(J_\C)=\emptyset$.

\item The ideal $I$ is squarefree and is supported on a vertex set
$V(\Delta)\subseteq\{x_1,\dots,x_n,y_1,\dots,y_n\}$, so $|V(\Delta)|\le 2n$.
In particular, $\beta_{i,j}(I)=0$ for all $j>2n$.

\item Every minimal generator of $I$ is a \emph{transversal} monomial $x_\sigma y_\tau$ with
$\sigma\cap\tau=\varnothing$.  Hence $\deg(x_\sigma y_\tau)=|\sigma|+|\tau|\le n$, and therefore
$\beta_{0,j}(I)=0$ for all $j>n$.
\end{enumerate}
\end{remark}

\begin{remark}\label{rem:degenerate-codes}
Two degenerate cases will be used implicitly. 
We use the notation $2^{[n]}$ to denote the list of all codewords on $n$ neurons. 
\begin{enumerate}[label=(\alph*)]
\item If $\C= 2^{[n]}$, then $J_\C=(0)$ and $I=\P(J_\C)=(0)$.
\item If $\C=\varnothing$, then $J_\C=(1)$ and $I=\P(J_\C)=S$. Hence $\pd(I)=0$ and $\reg(I)=0$.
Indeed, since every $v\in\F_2^n$ is a noncodeword,
\[
1  =  \prod_{i=1}^n\bigl(x_i+(1-x_i)\bigr)  =  \sum_{v\in\F_2^n}\rho_v \in J_\C.
\]
\end{enumerate}
\end{remark}

\begin{remark}\label{rem:n=1}
When $n=1$, a nonempty code $\C\subseteq\F_2$ is either $\F_2$, $\{0\}$, or $\{1\}$.
If $\C=\F_2$ then $I=(0)$.  Otherwise, in $S=k[x_1,y_1]$,
\[
\P(J_{\{0\}})=\langle x_1\rangle
\qquad\text{and}\qquad
\P(J_{\{1\}})=\langle y_1\rangle,
\]
and in either case $\pd(I)=0$ and $\reg(I)=1$.
\end{remark}

\medskip

Throughout the paper we assume $n\ge 2$ and $\C\neq 2^{[n]}$ (equivalently, $I\neq (0)$).

\begin{definition}\label{def:nondegenerate}
We call $\C$ \emph{nondegenerate} if no coordinate is fixed on $\C$, i.e.\ for every $i\in[n]$ there
exist $c,c'\in\C$ with $c_i=1$ and $c'_i=0$.

Equivalently, in any realization $\U=\{U_1,\dots,U_n\}$ of $\C$, each $U_i$ is nonempty and does not
cover the entire stimulus space $X$.  By \Cref{thm:rf-neural-ideal}, this is equivalent to requiring
that neither $x_i\in J_\C$ nor $(1-x_i)\in J_\C$.  After polarization, this becomes:
\[
\C\ \text{is nondegenerate}\qquad\Longleftrightarrow\qquad
x_i\notin I\ \text{and}\ y_i\notin I\ \text{for all }i\in[n].
\]
\end{definition}

\subsection*{Free and constant neurons}

Let $R_{n+1}=R[x_{n+1}]$ and $S_{n+1}=S[x_{n+1},y_{n+1}]$.

\begin{lemma}\label{lem:free-neuron}
Let $\C\subseteq\F_2^n$ and define the free-neuron extension
\[
\C'  :=  \{c0, c1:\ c\in\C\} = \C\times\F_2  \subseteq \F_2^{n+1}.
\]
Let $I'=\P(J_{\C'})\subseteq S_{n+1}$.  Then $\pd(I')=\pd(I)$ and $\reg(I')=\reg(I)$.
\end{lemma}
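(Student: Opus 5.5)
Our plan is to show that the polarized neural ideal does not change under the free-neuron extension, apart from the ambient ring: $I'=I\,S_{n+1}$. Once this is in hand, \Cref{lem:add_variable_pd_reg}, applied twice (adjoining $x_{n+1}$ and then $y_{n+1}$), gives $\pd(I')=\pd(I)$ and $\reg(I')=\reg(I)$.

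\textbf{Step 1: noncodewords.} The noncodewords of $\C'$ are exactly the words $v0$ and $v1$ with $v\in\F_2^n\setminus\C$. Hence
\[
J_{\C'}=\langle \rho_v x_{n+1},\ \rho_v(1-x_{n+1}) : v\notin\C\rangle .
\]
Summing each pair shows $\rho_v\in J_{\C'}$, and conversely every generator above lies in $J_\C R_{n+1}$. Therefore $J_{\C'}=J_\C R_{n+1}$.

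\textbf{Step 2: pseudo-monomials in $J_{\C'}$.} We use the membership criterion of \Cref{rem:boolean-zeroset}. A pseudo-monomial $f$ lies in $J_{\C'}$ if and only if $f$ vanishes on $\C'$; the analogous statement holds for $J_\C$, by the standard fact that neural ideals are exactly the Boolean vanishing ideals of their codes, \cite{smb}.

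Now take $f=g\,x_{n+1}$ or $f=g\,(1-x_{n+1})$, where $g$ is a pseudo-monomial in the first $n$ variables. Since both values of the last coordinate occur in $\C'$ above every $c\in\C$, $f$ vanishes on $\C'$ exactly when $g$ vanishes on $\C$. If $f=g$ does not involve $x_{n+1}$, then $f$ vanishes on $\C'$ exactly when $g$ vanishes on $\C$.

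\textbf{Step 3: the canonical form.} By Step 2, no minimal pseudo-monomial of $J_{\C'}$ involves $x_{n+1}$ or $1-x_{n+1}$, because the proper divisor $g$ already lies in $J_{\C'}$. The minimal pseudo-monomials that do not involve the new variable are precisely those of $J_\C$. Hence $CF(J_{\C'})=CF(J_\C)$.

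By \cite[Theorem~3.2]{gjs}, we conclude $I'=\langle \P(f): f\in CF(J_\C)\rangle S_{n+1}=I\,S_{n+1}$. Alternatively, one can bypass the canonical form and apply Step 2 directly to the definition of $\P(J_{\C'})$ as generated by the polarizations of all pseudo-monomials in $J_{\C'}$.

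\textbf{Main obstacle.} The delicate step is the membership criterion ``pseudo-monomial $\in J_\C$ iff it vanishes on $\C$'' used in Step 2. We cite it from \cite{smb}, where $J_\C=I(\C)\cap$ (Boolean relations) and $J_\C$ is radical together with the Boolean ideal.

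If one wants to avoid that citation, there is a direct argument. Write $f=x_\sigma\prod_{j\in\tau}(1-x_j)$ as the sum of $\rho_w$ over all $w$ agreeing with $\sigma$ and $\tau$, modulo Boolean relations. Then $f$ vanishing on $\C$ means every such $w$ is a noncodeword.

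This is the point to check most carefully. The identity $f=\sum_w\rho_w$ holds exactly, not only modulo Boolean relations, by expanding $1=x_i+(1-x_i)$ in each free coordinate, as in \Cref{rem:degenerate-codes}(b). This gives one direction; the converse follows from evaluation, via \Cref{rem:boolean-zeroset}.
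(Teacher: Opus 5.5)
Your proof is correct and follows the paper's route. Both arguments identify the noncodewords of $\C'$ as $(\F_2^n\setminus\C)\times\F_2$ to get $J_{\C'}=J_\C R_{n+1}$, deduce $I'=I\,S_{n+1}$, and finish with \Cref{lem:add_variable_pd_reg}. Your Steps 2--3 add something the paper only asserts: they justify the passage from $J_{\C'}=J_\C R_{n+1}$ to $\P(J_{\C'})=\P(J_\C)\,S_{n+1}$. You do this via the criterion that a pseudo-monomial lies in $J_\C$ iff it vanishes on $\C$, proved with the exact identity $x_\sigma\prod_{j\in\tau}(1-x_j)=\sum_{w\in Q(\sigma,\tau)}\rho_w$, which yields $CF(J_{\C'})=CF(J_\C)$.
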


\begin{proof}
The noncodewords of $\C'=\C\times\F_2$ are exactly $(\F_2^n\setminus\C)\times\F_2$.  So the neural ideal is
obtained by extension of scalars:
\[
J_{\C'}  =  J_\C\cdot R_{n+1}.
\]
Hence
\[
I'=\P(J_{\C'})  =  \P(J_\C)\cdot S_{n+1}  =  I\cdot S_{n+1}.
\]
The equalities $\pd(I')=\pd(I)$ and $\reg(I')=\reg(I)$  follow from
\Cref{lem:add_variable_pd_reg}.
\end{proof}

\begin{lemma}\label{lem:constant-neuron}
Let $\C\subseteq\F_2^n$ and define the constant-neuron extensions
\[
\C'_0:=\{c0:\ c\in\C\}\subseteq\F_2^{n+1}
\qquad\text{and}\qquad
\C'_1:=\{c1:\ c\in\C\}\subseteq\F_2^{n+1}.
\]
Let $I'_0=\P(J_{\C'_0})$ and $I'_1=\P(J_{\C'_1})$ in $S_{n+1}$.  Then
\[
\pd(I'_0)=\pd(I'_1)=\pd(I)+1
\qquad\text{and}\qquad
\reg(I'_0)=\reg(I'_1)=\reg(I).
\]
\end{lemma}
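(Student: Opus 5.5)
The plan is to identify the neural ideal of $\C'_0$ explicitly and then apply the second half of \Cref{lem:add_variable_pd_reg}. We treat $\C'_0$; the case $\C'_1$ is symmetric under $x_{n+1}\leftrightarrow 1-x_{n+1}$, which exchanges $x_{n+1}$ and $y_{n+1}$ after polarization.

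\textbf{Step 1: noncodewords and neural ideal.} The noncodewords of $\C'_0$ are $(\F_2^n\setminus\C)\times\{0\}$ together with all of $\F_2^n\times\{1\}$. Consider the vectors $v1$ for $v\in\F_2^n$. Summing their characteristic pseudo-monomials gives
\[
\sum_{v\in\F_2^n}\rho_v\,x_{n+1}=x_{n+1},
\]
by the same telescoping identity as in \Cref{rem:degenerate-codes}(b), so $x_{n+1}\in J_{\C'_0}$. Modulo $x_{n+1}$, each $\rho_{v0}=\rho_v(1-x_{n+1})$ is congruent to $\rho_v$. Hence
\[
J_{\C'_0}=J_\C R_{n+1}+(x_{n+1}).
\]

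\textbf{Step 2: canonical form.} We next claim that
\[
CF(J_{\C'_0})=CF(J_\C)\cup\{x_{n+1}\}.
\]
This step needs care, since polarization is defined through pseudo-monomials rather than through arbitrary generators. Take a pseudo-monomial $f$ in $J_{\C'_0}$. By \Cref{rem:boolean-zeroset}, membership is equivalent to $f$ vanishing on $\C'_0$.
\begin{itemize}
\item If $f$ contains the factor $x_{n+1}$, it is divisible by $x_{n+1}$. So the only minimal one of this kind is $x_{n+1}$ itself.
\item If $f$ contains $1-x_{n+1}$, then $f=g(1-x_{n+1})$. Since $1-x_{n+1}$ equals $1$ on $\C'_0$, the factor $g$ vanishes on $\C'_0$. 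So $g\in J_{\C'_0}$ and $f$ is not minimal.
\item If $f$ involves only the first $n$ variables, then $f$ vanishes on $\C'_0$ if and only if it vanishes on $\C$, that is, if and only if $f\in J_\C$. Its minimality is also unchanged.
\end{itemize}
Since $\C\neq\emptyset$ implicitly (otherwise the invariants are degenerate), no element of $CF(J_\C)$ is divisible by $x_{n+1}$. The claim follows.

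\textbf{Step 3: conclusion.} By \cite[Theorem 3.2]{gjs},
\[
I'_0=I\,S_{n+1}+(x_{n+1}).
\]
First adjoin $y_{n+1}$ using the first part of \Cref{lem:add_variable_pd_reg}; this leaves $\pd$ and $\reg$ unchanged. Then add $(x_{n+1})$ using the second part, which raises $\pd$ by one and leaves $\reg$ unchanged. The second part requires $I$ to be a proper ideal, which holds because $\C\neq\emptyset$.

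The main obstacle is Step 2, the canonical-form bookkeeping: polarization must be computed from pseudo-monomials and not from arbitrary generators. The vanishing-on-$\C'_0$ criterion makes this routine.
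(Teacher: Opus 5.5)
Your proof is correct and follows the paper's route: you identify $J_{\C'_0}=J_\C R_{n+1}+(x_{n+1})$, deduce $I'_0=I\,S_{n+1}+(x_{n+1})$, and apply \Cref{lem:add_variable_pd_reg}. Your Step~2 supplies two details the paper compresses into ``polarizing gives'': the canonical-form bookkeeping, and the explicit adjunction of $y_{n+1}$ followed by $x_{n+1}$. One small correction: \Cref{rem:boolean-zeroset} only shows that elements of $J_\C$ vanish on $\C$, so the converse for a pseudo-monomial $f$ should instead be justified by the identity $f=\sum_{v:\,f(v)=1}\rho_v$, which the paper uses elsewhere.
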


\begin{proof}
We treat $\C'_0$; the case $\C'_1$ is analogous.  Since every codeword in $\C'_0$ has last coordinate $0$,
we have $x_{n+1}\in J_{\C'_0}$.  Moreover, if $v\notin\C$, then both $(v,0)$ and $(v,1)$ are
noncodewords of $\C'_0$. So $J_\C\cdot R_{n+1}\subseteq J_{\C'_0}$.  Conversely, if $w\notin\C'_0$ then
either $w_{n+1}=1$ (so $\rho_w$ is divisible by $x_{n+1}$) or $w=(v,0)$ with $v\notin\C$ (so $\rho_w$ is
divisible by $\rho_v$).  Hence
\[
J_{\C'_0}  =  J_\C\cdot R_{n+1} + (x_{n+1}).
\]
Polarizing gives
\[
I'_0  =  \P(J_{\C'_0})
 =  \P(J_\C)\cdot S_{n+1} + (x_{n+1})
 =  I\cdot S_{n+1} + (x_{n+1}).
\]
Then \Cref{lem:add_variable_pd_reg} yields
$\pd(I'_0)=\pd(I)+1$ and $\reg(I'_0)=\reg(I)$.

For $\C'_1$, the same argument shows $J_{\C'_1}=J_\C\cdot R_{n+1}+(1-x_{n+1})$, and polarization sends
$(1-x_{n+1})$ to $y_{n+1}$, giving $I'_1=I\cdot S_{n+1}+(y_{n+1})$ and the same conclusions.
\end{proof}

\section{Polar complex and its connectedness}\label{subsec:polar-connectedness}

In this section we study the polar complex $\Delta$ and connectivity properties of its induced subcomplexes. These connectivity statements will be used repeatedly to force vanishing of certain homology groups in Hochster’s formula. In particular, we prove that $\Delta_\C$ is connected and that every induced subcomplex on sufficiently many vertices is connected.

\begin{lemma}\label{lem:vertex_size} 
   Let $\C\subseteq \F_2^n$ be a nonempty neural code with its polar complex $\Delta$. Then $|V(\Delta)|\ge n$. In particular, if $|V(\Delta)|=n$, then $\C$ consists of a single word and $\Delta$ is a simplex.
\end{lemma}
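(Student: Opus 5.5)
Here $V(\Delta)$ is the set of variables $z\in\{x_1,\dots,x_n,y_1,\dots,y_n\}$ that are faces of $\Delta$, i.e.\ with $z\notin I$; these are exactly the vertices in the Stanley--Reisner correspondence. The plan is to show that for every $i\in[n]$ at least one of $x_i,y_i$ is a vertex, and then to analyze what happens when exactly one of each pair survives. The first step is immediate from \Cref{rem:basic_degree_bounds}(b): since $\C\neq\emptyset$, we cannot have both $x_i\in I$ and $y_i\in I$. Hence $V(\Delta)$ contains at least one of $x_i,y_i$ for each $i$, and $|V(\Delta)|\ge n$.

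Now suppose $|V(\Delta)|=n$. Then for every $i$ exactly one of $x_i,y_i$ lies in $I$. By the argument in \Cref{def:nondegenerate}, a variable $x_i$ or $y_i$ lies in $I$ exactly when $x_i\in J_\C$ or $(1-x_i)\in J_\C$. (Here we use that the pseudo-monomials in $J_\C$ polarize into $I$, and that any degree-one generator of $I$ must come from a canonical form element $x_i$ or $1-x_i$.) By \Cref{thm:rf-neural-ideal} or \Cref{rem:boolean-zeroset}, this means every codeword has a fixed value in coordinate $i$. Thus every coordinate is fixed on $\C$. Since $\C$ is nonempty, $\C=\{v\}$ for a single word $v$, namely the one with $v_i=0$ when $x_i\in I$ and $v_i=1$ when $y_i\in I$.

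Finally we show that $\Delta$ is a simplex. For $\C=\{v\}$, the ideal $J_\C$ contains $x_i$ for $v_i=0$ and $1-x_i$ for $v_i=1$. It is therefore generated by these linear pseudo-monomials, since their zero set in $\F_2^n$ is $\{v\}$ and any other pseudo-monomial in $J_\C$ is divisible by one of them. So the canonical form consists exactly of these $n$ linear forms, and $I$ is generated by the $n$ variables not in $V(\Delta)$. Because $I$ is generated by the complementary variables, every subset of the remaining $n$ vertices is a face, and $\Delta$ is the full simplex on those vertices. The bound and the identification of $\C$ are straightforward; the step needing the most care is this last one, confirming that $I$ contains no additional minimal generators beyond the variables. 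That holds because a pseudo-monomial not divisible by any of these linear forms has only variables matching $v$ and would not vanish at $v$.
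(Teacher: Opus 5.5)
Your proof is correct and follows the paper's argument step for step. Both use \Cref{rem:basic_degree_bounds}(b) to get at least one vertex from each pair $\{x_i,y_i\}$, then depolarize the $n$ linear generators to force every coordinate to be constant on $\C$. Your extra check that every pseudo-monomial in $J_{\{v\}}$ is divisible by one of these linear forms, so that $I$ has no further minimal generators, makes explicit a point the paper simply asserts when it says $I$ is generated by $n$ variables.
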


\begin{proof}
Fix $i\in[n]$. We claim that at least one of $x_i$ or $y_i$ is a vertex of $\Delta$.
Indeed, if neither $x_i$ nor $y_i$ were a vertex, then both $x_i$ and $y_i$ would lie in $I$.
By \Cref{rem:basic_degree_bounds}(2), this forces $\C=\emptyset$, contradicting the hypothesis.
Thus $|V(\Delta)|\ge n$.

Now assume $|V(\Delta)|=n$. Then for each $i$ exactly one of $\{x_i,y_i\}$ is a vertex of $\Delta$,
so the other variable lies in $I$. Depolarizing, we obtain that for each $i$ either $x_i\in J_\C$
or $(1-x_i)\in J_\C$. Since every polynomial in $J_\C$ vanishes on $\C$, it follows that for each $i$
the $i$th coordinate is constant across $\C$ (equal to $0$ in the first case and $1$ in the second).
Therefore $\C$ contains at most one word, and since $\C\neq\emptyset$ it consists of exactly one word.

Finally, when $|V(\Delta)|=n$ the ideal $I$ contains precisely one of $x_i,y_i$ for each $i$, hence is
generated by $n$ variables. Equivalently, $\Delta$ is the simplex on its vertex set.
\end{proof}

\begin{lemma}\label{lem:DW-connected-largeW}
Let $\C\subseteq \F_2^n$ be a nonempty neural code with its  polar complex $\Delta$ such that $|V(\Delta)|\ge n+1$.
If $W\subseteq V(\Delta)$ satisfies $|W|\ge n+1$, then the induced subcomplex $\Delta_W$ is connected.
\end{lemma}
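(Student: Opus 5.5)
The plan is to cover $W$ by simplices of $\Delta_W$ coming from codewords, and then to link all of these simplices through a single edge of the form $\{x_i,y_i\}$. Pigeonhole supplies that edge from the assumption $|W|\ge n+1$.

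\textbf{Step 1 (codeword simplices).} For a codeword $c\in\C$, I will show that the transversal monomial $m_c$ of \Cref{def:transversal_monomial} does not lie in $I$. Since $I$ is a squarefree monomial ideal generated by polarizations $\P(f)=x_\sigma y_\tau$ of pseudo-monomials $f\in J_\C$, it suffices to rule out $x_\sigma y_\tau\mid m_c$ for every such $f$. If $x_\sigma y_\tau\mid m_c$, then $\sigma\subseteq\{i:c_i=1\}$ and $\tau\subseteq\{i:c_i=0\}$. In that case
\[
f(c)=\prod_{i\in\sigma}c_i\prod_{j\in\tau}(1-c_j)=1,
\]
which contradicts the fact that $f$ vanishes on $\C=V(J_\C)$ (\Cref{rem:boolean-zeroset}). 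Hence the support $F_c$ of $m_c$ is a face of $\Delta$ with $n$ vertices, and so $F_c\cap W$ is a face of $\Delta_W$.

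\textbf{Step 2 (every vertex lies in some $F_c$).} Let $x_j\in V(\Delta)$. Then $x_j\notin I$. If every codeword had $c_j=0$, then $x_j$ would vanish on $\C$ and would lie in $J_\C$. It would then be a minimal pseudo-monomial, or be divisible by one, and this would force $x_j\in I$, since $I$ contains $\P(f)$ for every pseudo-monomial $f\in J_\C$ by \Cref{def:polarization}. So some codeword $d$ has $d_j=1$, and therefore $x_j\in F_d$. The case $y_j\in V(\Delta)$ is symmetric. I expect this to be the only delicate point: I need to check that "$x_j$ vanishes on $\C$" gives $x_j\in J_\C$. This holds because $x_j=\sum_{v:\,v_j=1}\rho_v$ and each of these $v$ is then a noncodeword.

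\textbf{Step 3 (a linking edge).} Since $W$ has at least $n+1$ elements drawn from the $n$ pairs $\{x_i,y_i\}$, pigeonhole gives an index $i$ with $x_i,y_i\in W$. The product $x_iy_i$ is not in $I$. Indeed, every minimal generator of $I$ is transversal (\Cref{rem:basic_degree_bounds}), so the only generators that could divide $x_iy_i$ are $x_i$ or $y_i$ themselves, and neither lies in $I$ because both are vertices of $\Delta$. Thus $\{x_i,y_i\}$ is an edge of $\Delta_W$. Moreover, for every codeword $c$, exactly one of $x_i,y_i$ lies in $F_c$, so $F_c\cap W$ always meets this edge.

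\textbf{Conclusion.} Each vertex $w\in W$ lies in some face $F_d\cap W$ by Step 2. That face is a simplex of $\Delta_W$ containing $x_i$ or $y_i$, and these two vertices are joined by an edge by Step 3. Hence every vertex of $\Delta_W$ is connected to the edge $\{x_i,y_i\}$, and $\Delta_W$ is connected.
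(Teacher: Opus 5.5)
Your proof is correct, but it takes a different route from the paper's. Both arguments start with the same pigeonhole step, which produces an edge $\{x_i,y_i\}$ of $\Delta_W$.

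The paper then argues by contradiction. A vertex $u$ outside the component of that edge is non-adjacent to both $x_i$ and $y_i$, so $ux_i,uy_i\in I$. Depolarizing and adding, for instance $x_j=x_jx_i+x_j(1-x_i)$, puts the depolarization of $u$ in $J_\C$. Canonical-form minimality then gives $u\in I$, contradicting $u\in V(\Delta)$.

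You instead argue directly from codewords. Each $c\in\C$ yields a face $F_c=\supp(m_c)$ of size $n$, these faces cover $V(\Delta)$, and each contains exactly one of $x_i,y_i$.

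The two arguments rest on the same fact. A codeword $d$ witnessing $x_j\notin J_\C$ has $d_i\in\{0,1\}$, and that is exactly why one of $x_jx_i$ and $x_j(1-x_i)$ fails to vanish on $\C$. Your version has two advantages:
\begin{itemize}
\item It is constructive and avoids both canonical-form minimality and the depolarization of elements of $I$. You only use that $I$ contains $\P(f)$ for every pseudo-monomial $f\in J_\C$ and that $J_\C$ vanishes on $\C$.
\item It isolates a reusable structural fact: the transversal faces of size $n$ of $\Delta_\C$ are exactly the $F_c$ with $c\in\C$, and they cover the vertex set. The converse direction holds because $m_v=\P(\rho_v)\in I$ for $v\notin\C$.
\end{itemize}
The paper's version is shorter and stays entirely in ideal-theoretic language.

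Both proofs actually show more than stated: every vertex of $W$ is equal or adjacent to $x_i$ or $y_i$. Both also use only that $W$ contains some pair $\{x_i,y_i\}$.

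A small tidy-up in your Step~2: $x_j$ is itself a pseudo-monomial in $J_\C$, so $x_j=\P(x_j)\in I$ directly. The clause ``minimal or divisible by one'' is unnecessary.
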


\begin{proof}
Since there are only $n$ indices, the inequality $|W|\ge n+1$ implies that $W$ contains both vertices
from at least one pair $\{x_i,y_i\}$; that is, there exists $i\in[n]$ such that $x_i,y_i\in W$.
Because $x_i,y_i\in V(\Delta)$, we have $x_i,y_i\notin I$.
Moreover, no polar canonical form generator involves both $x_i$ and $y_i$, so $x_iy_i\notin I$.
Hence $\{x_i,y_i\}$ is an edge of $\Delta$, and therefore also an edge of the induced subcomplex $\Delta_W$.

Suppose for contradiction that $\Delta_W$ is disconnected, and let $\Gamma$ be the connected component
of $\Delta_W$ containing the edge $\{x_i,y_i\}$. Choose a vertex $u\in W\setminus V(\Gamma)$.
Then $u$ is not adjacent in $\Delta_W$ to either $x_i$ or $y_i$. So $\{u,x_i\}$ and $\{u,y_i\}$ are
nonfaces of $\Delta_W$, hence also nonfaces of $\Delta$.
Since $u,x_i,y_i\in V(\Delta)$, the singletons are faces, so these missing edges are minimal nonfaces.
Therefore the quadratic monomials $ux_i$ and $uy_i$ lie in $I$.

First assume $u=x_j$ for some $j$. Then $x_jx_i\in I$ and $x_jy_i\in I$.
Depolarizing $y_i\mapsto (1-x_i)$ gives $x_jx_i\in J_\C$ and $x_j(1-x_i)\in J_\C$. Hence
\[
x_j = x_jx_i + x_j(1-x_i)\in J_\C.
\]
Since $x_j$ is a pseudo-monomial and $1\notin J_\C$, it is minimal in $J_\C$
with respect to divisibility. Hence $x_j\in CF(J_\C)$ and $x_j\in I$, contradicting $x_j\in V(\Delta)$.

Now assume $u=y_j$ for some $j$. Then $y_jx_i\in I$ and $y_jy_i\in I$.
Depolarizing gives $x_i(1-x_j)\in J_\C$ and $(1-x_i)(1-x_j)\in J_\C$. Hence
\[
(1-x_j)=x_i(1-x_j)+(1-x_i)(1-x_j)\in J_\C.
\]
Again $(1-x_j)$ is minimal among pseudo-monomials in $J_\C$. Therefore
$(1-x_j)\in CF(J_\C)$ and $y_j\in I$, contradicting $y_j\in V(\Delta)$.

Therefore no such $u$ exists, and $\Delta_W$ is connected.
\end{proof}

\begin{corollary}\label{cor:polar-complex-connected}
Let $\C\subseteq \F_2^n$ be a nonempty neural code. Then its polar complex $\Delta$ is connected.
\end{corollary}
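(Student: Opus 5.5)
The plan is to split according to the size of the vertex set $V(\Delta)$, using \Cref{lem:vertex_size} for the small case and \Cref{lem:DW-connected-largeW} for the large case. Since $\C$ is nonempty, \Cref{lem:vertex_size} gives $|V(\Delta)|\ge n$, so there are exactly two cases.

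\emph{Case $|V(\Delta)|=n$.} Here \Cref{lem:vertex_size} says that $\C$ is a single word and that $\Delta$ is the full simplex on its $n\ge 1$ vertices. A simplex is connected, so nothing more is needed.

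\emph{Case $|V(\Delta)|\ge n+1$.} Take $W=V(\Delta)$. It satisfies $|W|\ge n+1$, and the induced subcomplex $\Delta_W$ is all of $\Delta$. \Cref{lem:DW-connected-largeW} then says directly that $\Delta=\Delta_W$ is connected.

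The only subtlety is bookkeeping: we should confirm that $V(\Delta)$ is nonempty, so that "connected" is meaningful and $\Delta$ is not the void complex. This follows from $|V(\Delta)|\ge n\ge 1$. I do not expect any step to be a genuine obstacle, since the work was already done in the two preceding lemmas.
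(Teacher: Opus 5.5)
Your proposal is correct and matches the paper's proof: \Cref{lem:vertex_size} gives $|V(\Delta)|\ge n$ and handles the simplex case $|V(\Delta)|=n$, and \Cref{lem:DW-connected-largeW} with $W=V(\Delta)$ handles $|V(\Delta)|\ge n+1$. Your extra remark that $V(\Delta)\neq\emptyset$ because $n\ge 1$ is a harmless addition the paper leaves implicit.
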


\begin{proof}
It follows from \Cref{lem:vertex_size}  that $|V(\Delta)|\geq n$ and $\Delta$ is connected when $|V(\Delta)|=n$.
If $|V(\Delta)|\ge n+1$,  Lemma~\ref{lem:DW-connected-largeW} applied to $W=V(\Delta)$
shows that $\Delta$ is connected.
\end{proof}
\section{Maximum Projective Dimension}\label{sect:results}

In this section we study the projective dimension of polarized neural ideals. Our first goal is to provide an upper bound on
$\pd(I)$ in terms of $n$, and our second goal is to determine whether this bound is sharp.

\begin{remark}\label{rem:degenerate}
Since $I\subseteq S$ is an ideal in a polynomial ring in $2n$ variables, Hilbert--Syzygy Theorem gives the crude bound
$\pd(I)\le 2n$.  If $I\neq 0$ (namely, $\C\neq \F_2^n$), then  
% any variable (e.g.\ $x_1$) is a
% nonzerodivisor on $I$, so 
$\depth(I)\ge 1$.  By Auslander--Buchsbaum,
\[
\pd (I)\le 2n-1.
\]
\end{remark}

We can improve the upper bound as follows. 

\begin{theorem}\label{thm:pdim_upper_bound}
Let $\C\subseteq\F_2^n$ be a nonempty neural code. Then
\[
\pd(I) \le 2n-3.
\]
\end{theorem}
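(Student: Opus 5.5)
We will prove the bound with Hochster's formula (\Cref{thm:hochster}). A nonzero Betti number $\beta_{i,j}(I)\neq 0$ requires a subset $W\subseteq V(\Delta)$ with $|W|=j$ and $\widetilde H_{j-i-2}(\Delta_W;k)\neq 0$. So it suffices to show: for every $W\subseteq V(\Delta)$ and every homological degree $d=j-i-2$ with $\widetilde H_d(\Delta_W)\neq 0$, we have $i=|W|-d-2\le 2n-3$.

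The case $d\ge 0$ is immediate. Since $|W|\le |V(\Delta)|\le 2n$ by \Cref{rem:basic_degree_bounds}(c), we get $i=|W|-d-2\le 2n-2$, with equality only when $|W|=2n$ and $d=0$. We also need to rule out $i=2n-2$ in general. That value arises either from $|W|=2n$ with $d=0$, or from $|W|=2n-1$ with $d=-1$, and the latter is impossible because $\widetilde H_{-1}(\Delta_W)\ne0$ only for $W=\emptyset$. More systematically, we split by the value of $d$.

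\emph{Case $d=-1$.} Here $\widetilde H_{-1}(\Delta_W)\neq 0$ forces $W=\emptyset$, so $j=0$ and $i=-1$, which is irrelevant.

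\emph{Case $d=0$.} Here $\Delta_W$ is disconnected. By \Cref{lem:DW-connected-largeW}, this forces $|W|\le n$ whenever $|V(\Delta)|\ge n+1$. If instead $|V(\Delta)|=n$, then $\Delta$ is a simplex by \Cref{lem:vertex_size}, so every induced subcomplex is a simplex and is never disconnected. Either way $i=|W|-2\le n-2\le 2n-3$.

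\emph{Case $d\ge 1$.} Here $i=|W|-d-2\le 2n-3$ directly from $|W|\le 2n$.

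This covers every nonzero Betti number, so $\pd(I)\le 2n-3$. We also note the degenerate situations. The statement assumes $\C$ nonempty, and we assume $\C\ne\F_2^n$; otherwise $I=0$ and the bound is vacuous. For $n\ge 2$ we have $n-2\le 2n-3$; for $n=1$ the bound reads $\pd\le -1$, which fails by \Cref{rem:n=1}, consistent with the standing assumption $n\ge2$.

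The main obstacle is the $d=0$ case, that is, controlling disconnected induced subcomplexes on many vertices. This is exactly where the connectedness results of \Cref{subsec:polar-connectedness} enter: \Cref{lem:DW-connected-largeW} handles large $W$, and \Cref{lem:vertex_size} handles the boundary situation $|V(\Delta)|=n$. Everything else is bookkeeping with the vertex count $|V(\Delta)|\le 2n$.
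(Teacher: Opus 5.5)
Your strategy is the paper's: Hochster's formula plus connectedness of induced subcomplexes of $\Delta$. The paper is more economical. It notes that $i\ge 2n-2$ can only come from $(i,j)=(2n-2,2n)$, where $\Delta_W=\Delta$, so it needs only \Cref{cor:polar-complex-connected}. You instead split by $d$ and use the stronger \Cref{lem:DW-connected-largeW}.

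There is, however, a real but easily repaired slip, which the paper's written proof also makes. Hochster's formula (\Cref{thm:hochster}) sums over all subsets $W$ of the $2n$ variables, not only over $W\subseteq V(\Delta)$. When $\C$ has a fixed coordinate, some variables lie in $I$ and are not vertices of $\Delta$. For $W$ containing such variables, $\Delta_W=\Delta_{W\cap V(\Delta)}$ while $|W|>|W\cap V(\Delta)|$. So your claim that $\widetilde H_{-1}(\Delta_W)\neq 0$ forces $W=\emptyset$ is false: it happens exactly when $W$ consists of variables lying in $I$. For example, $\C=\{0\cdots 0\}$ gives $I=(x_1,\dots,x_n)$, and $\Delta$ is the simplex on $y_1,\dots,y_n$. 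Every nonzero Betti number of $I$ comes from such a $W$, so your restricted formula would predict that all Betti numbers vanish. Similarly, in the case $d=0$, \Cref{lem:DW-connected-largeW} bounds $|W\cap V(\Delta)|\le n$, not $|W|$, so $i\le n-2$ does not follow as written.

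The repair is short. Let $t=2n-|V(\Delta)|$ be the number of variables in $I$. By \Cref{rem:basic_degree_bounds}(b), at most one of $x_i,y_i$ lies in $I$ for each $i$, so $t\le n$.
\begin{itemize}
\item For $d=-1$, you get $i=|W|-1\le t-1\le n-1\le 2n-3$, using $n\ge 2$.
\item For $d=0$ with $|V(\Delta)|\ge n+1$, you have $t\le n-1$, so $|W|\le n+t\le 2n-1$ and $i\le 2n-3$.
\item For $d=0$ with $|V(\Delta)|=n$, $\Delta$ is a simplex and no $\Delta_W$ is disconnected.
\end{itemize}
Your case $d\ge 1$ is unaffected.

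Your sharper bound $i\le n-2$ on the quadratic strand is in fact true, but it needs one more observation. For each fixed coordinate, the other variable of the pair is a cone point of $\Delta$. Hence disconnected induced subcomplexes live in the polar complex of the code restricted to the non-fixed coordinates, as in the reduction used in \Cref{prop:reg2-pd-bound}.
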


\begin{proof}
Recall that $V(\Delta)\subseteq V$ where $V=\{x_1,\dots,x_n,y_1,\dots,y_n\}$.
By Hochster’s formula, if $\beta_{i,j}(I)\neq 0$ then there exists $W\subseteq V(\Delta)$ with $|W|=j$
such that $\widetilde H_{j-i-2}(\Delta_W;k)\neq 0$.

Since $|V(\Delta)|\le 2n$, we have $j\le 2n$. If $i\ge 2n-2$, then $j-i-2\le 0$.
Moreover, for $|W|\ge 1$ the induced complex $\Delta_W$ is nonempty. So $\widetilde H_t(\Delta_W;k)=0$
for all $t<0$. Hence the only potentially nonzero case is when $(i,j)=(2n-2,2n)$.

If $|V(\Delta)|<2n$, then no subset $W\subseteq V(\Delta)$ has $|W|=2n$, so $\beta_{2n-2,2n}(I)=0$.
If instead $|V(\Delta)|=2n$, then the only such subset is $W=V(\Delta)$ and 
$\beta_{2n-2,2n}(I)=\dim_k \widetilde H_0(\Delta;k)$.
By Corollary~\ref{cor:polar-complex-connected}, $\Delta$ is connected. So $\widetilde H_0(\Delta;k)=0$.

Therefore $\beta_{i,j}(I)=0$ for all $i\ge 2n-2$. Hence  $\pd(I)\le 2n-3$.
\end{proof}

We next relate the extremal projective dimension to regularity: attaining the upper bound
$\pd(I)=2n-3$ forces a lower bound on $\reg (I)$.

\begin{lemma}\label{lem:pdim-max-implies-reg-ge-3}
If \(\pd(I)=2n-3\), then $\beta_{2n-3,2n} (I)\neq 0$. In particular, \(\reg(I)\ge 3\) in this case.
\end{lemma}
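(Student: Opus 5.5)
By Hochster's formula, if $\beta_{2n-3,j}(I)\neq 0$ then some $W\subseteq V(\Delta)$ with $|W|=j$ has $\widetilde H_{j-(2n-3)-2}(\Delta_W;k)=\widetilde H_{j-2n+1}(\Delta_W;k)\neq 0$. Since $|V(\Delta)|\le 2n$ (\Cref{rem:basic_degree_bounds}), we have $j\le 2n$, so $j-2n+1\le 1$. The plan is to eliminate every value of $j$ except $j=2n$.

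\textbf{The case $j\le 2n-2$.} Here the homological index $j-2n+1$ is at most $-1$. For $j\ge 1$ the induced subcomplex $\Delta_W$ is nonempty, so $\widetilde H_{-1}(\Delta_W;k)=0$, and all homology in lower degrees vanishes trivially. The case $j=0$ does not occur for $i=2n-3\ge 1$, since it would require $W=\emptyset$ and $j-i-2=-1$, which fails. So no $j\le 2n-2$ contributes.

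\textbf{The case $j=2n-1$.} Now we need $\widetilde H_0(\Delta_W;k)\neq 0$ for some $W$ with $|W|=2n-1$, i.e.\ $\Delta_W$ disconnected. Such a $W$ requires $|V(\Delta)|\ge 2n-1\ge n+1$; the inequality $2n-1\ge n+1$ holds because $n\ge 2$. Then \Cref{lem:DW-connected-largeW} applies, since $|W|=2n-1\ge n+1$, and shows $\Delta_W$ is connected. Hence $\beta_{2n-3,2n-1}(I)=0$.

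\textbf{Conclusion.} Since $\pd(I)=2n-3$, some $\beta_{2n-3,j}(I)$ is nonzero, and the only surviving option is $j=2n$. Therefore $\beta_{2n-3,2n}(I)\neq 0$, and consequently $\reg(I)\ge 2n-(2n-3)=3$.

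The argument is essentially routine. The only delicate points are the index bookkeeping and confirming that the hypotheses of \Cref{lem:DW-connected-largeW} hold, which rests on $n\ge 2$ and on $\C$ being nonempty. Nonemptiness is implicit in the hypothesis: if $\C=\emptyset$ then $I=S$ and $\pd(I)=0\neq 2n-3$.
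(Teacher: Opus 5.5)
There is a genuine gap in your first step, and the paper's own proof has exactly the same one. Hochster's formula, as stated in \Cref{thm:hochster}, sums over all $W\subseteq\{x_1,\dots,x_n,y_1,\dots,y_n\}$, not only over $W\subseteq V(\Delta)$. A variable lying in $I$ is not a vertex of $\Delta$. If $W$ consists only of such variables, then $\Delta_W=\{\emptyset\}$ and $\widetilde H_{-1}(\Delta_W;k)=k$.

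So your claim ``for $j\ge 1$ the induced subcomplex $\Delta_W$ is nonempty'' fails. The case $j=2n-2$, with homological index $-1$, is not eliminated: it contributes whenever $I$ contains $2n-2$ variables. Since $I$ contains at most one of $x_i,y_i$ for each $i$, this forces $2n-2\le n$, i.e.\ $n=2$ and $\C$ a single codeword. In that case the statement is actually false. For $\C=\{00\}\subseteq\F_2^2$ one gets $I=(x_1,x_2)$, so $\pd(I)=1=2n-3$, but $\beta_{1,4}(I)=0$ and $\reg(I)=1$.

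The same issue affects your case $j=2n-1$. A $W$ of size $2n-1$ may contain variables of $I$, so $|W\cap V(\Delta)|$ can drop below $n+1$. Then \Cref{lem:DW-connected-largeW} does not apply as you invoke it.

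The missing idea is an argument that, under the hypothesis, $I$ contains no variables; this requires $n\ge 3$. Suppose $g\ge 1$ coordinates are fixed on $\C$. Iterating \Cref{lem:constant-neuron} writes $I=I''S+L$, where $L$ is generated by $g$ variables and $I''$ is the polarized ideal of the code on $n-g$ neurons obtained by deleting those coordinates. There are two cases:
\begin{enumerate}
\item $I''=0$, which is automatic if $n-g\le 1$. Then $\pd(I)=g-1\le n-1<2n-3$.
\item $n-g\ge 2$ and $I''\neq 0$. Then \Cref{thm:pdim_upper_bound} gives $\pd(I)=\pd(I'')+g\le 2(n-g)-3+g<2n-3$.
\end{enumerate}
Hence for $n\ge 3$, $\pd(I)=2n-3$ forces $V(\Delta)$ to be all $2n$ variables. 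From there your index bookkeeping, which matches the paper's, is correct. The statement should therefore carry the hypothesis $n\ge 3$, or exclude single-codeword codes when $n=2$.
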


\begin{proof}
Since $\pd(I)=2n-3$, there exists $j$ with
$\beta_{2n-3,j}(I)\neq 0$. By Hochster’s formula, there is a subset $W\subseteq V(\Delta)$ with $|W|=j$
such that $\widetilde H_{j-2n+1}(\Delta_W;k)\neq 0$.

Because $W\subseteq V(\Delta)$, the induced complex $\Delta_W$ is nonempty. Hence
$\widetilde H_t(\Delta_W;k)=0$ for $t<0$. Therefore $j-2n+1\ge 0$, so $j\ge 2n-1$.
On the other hand $j\le |V(\Delta)|\le 2n$. Thus either $j=2n-1$ or $j=2n$.

If $j=2n-1$, then Hochster’s formula forces $\widetilde H_0(\Delta_W;k)\neq 0$, so $\Delta_W$ is disconnected.
But $|W|=2n-1\ge n+1$, and \Cref{lem:DW-connected-largeW} asserts that every induced subcomplex on at least $n+1$
vertices is connected, a contradiction. Hence $j\neq 2n-1$. Therefore $j=2n$ and
\[
\beta_{2n-3,2n}(I)\neq 0.
\]
Consequently, $\reg(I)\ge j-(2n-3)=3$.
\end{proof}

Next, we identify classes of neural codes achieving the upper bound from \Cref{thm:pdim_upper_bound}.

\begin{theorem}\label{thm:all-or-nothing}
Let $\C_0=\{00\cdots 0, 11\cdots 1\}\subseteq\F_2^n$. Then
\[
\pd(\P(J_{\C_0}))=2n-3
\qquad\text{and}\qquad
\reg(\P(J_{\C_0}))=3.
\]
\end{theorem}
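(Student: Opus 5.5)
The plan is to compute $\Delta_{\C_0}$ explicitly and read off all Betti numbers from Hochster's formula (\Cref{thm:hochster}); the upper bound $\pd\le 2n-3$ is already supplied by \Cref{thm:pdim_upper_bound}.

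\emph{Step 1: the canonical form.} I claim that
\[
CF(J_{\C_0})=\{x_i(1-x_j): i\neq j\},
\qquad\text{so}\qquad
I=\P(J_{\C_0})=\langle x_iy_j : i\neq j\rangle .
\]
Each $x_i(1-x_j)$ with $i\ne j$ vanishes at both $0\cdots0$ and $1\cdots1$, so it lies in $J_{\C_0}$ by \Cref{rem:boolean-zeroset}. It is minimal, since $x_i$ does not vanish at $1\cdots1$ and $1-x_j$ does not vanish at $0\cdots0$. Conversely, let $x_\sigma\prod_{j\in\tau}(1-x_j)$ be a pseudo-monomial in $J_{\C_0}$. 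Vanishing at $0\cdots 0$ forces $\sigma\neq\emptyset$, and vanishing at $1\cdots1$ forces $\tau\ne\emptyset$. Since $\sigma\cap\tau=\emptyset$, it is divisible by some $x_i(1-x_j)$ with $i\ne j$.

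\emph{Step 2: the polar complex.} A set of vertices is a face exactly when it contains no pair $x_i,y_j$ with $i\neq j$. Hence $\Delta$ consists of:
\begin{itemize}
\item the full simplex $X$ on $\{x_1,\dots,x_n\}$;
\item the full simplex $Y$ on $\{y_1,\dots,y_n\}$;
\item the $n$ matching edges $\{x_i,y_i\}$.
\end{itemize}
For $W\subseteq V$, the induced subcomplex $\Delta_W$ is $(W\cap X)$ and $(W\cap Y)$, each a simplex or empty, joined by the $m$ edges $\{x_i,y_i\}$ with both endpoints in $W$. If both parts are nonempty, contracting the two simplices shows the following.
\begin{itemize}
\item If $m=0$, then $\Delta_W$ is homotopy equivalent to two points, so $\widetilde H_0\cong k$.
\item If $m\ge1$, then $\Delta_W$ is homotopy equivalent to a wedge of $m-1$ circles, so $\widetilde H_1\cong k^{m-1}$.
\end{itemize}
If one part is empty, $\Delta_W$ is a simplex and is acyclic.

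\emph{Step 3: conclusions.} By Hochster's formula, $\beta_{i,j}(I)\ne0$ forces $j-i-2\in\{0,1\}$, so $\reg(I)\le 3$. Taking $W=V$ gives $m=n$ and $\widetilde H_1(\Delta;k)\cong k^{n-1}\neq 0$ because $n\ge2$. Hence $\beta_{2n-3,2n}(I)\ne0$, which yields $\pd(I)\ge 2n-3$ and $\reg(I)\ge 3$. Combining this with \Cref{thm:pdim_upper_bound} gives both equalities.

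The only step needing care is the homotopy description in Step 2. I would justify it by collapsing each simplex to a point, which turns $\Delta_W$ into a graph with two vertices and $m$ parallel edges. Alternatively, a direct Euler characteristic count works, together with connectedness when $m\ge 1$.
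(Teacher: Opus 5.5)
Your proof is correct. It agrees with the paper on three points: the canonical form, the shape of $\Delta_{\C_0}$ (two $(n-1)$-simplices joined by the $n$ matching edges), and getting $\pd$ from \Cref{thm:pdim_upper_bound} plus $\widetilde H_1(\Delta;k)\neq 0$ at $W=V$. The paper proves that last nonvanishing with an explicit $4$-cycle that cannot bound, since every $2$-face lies in one of the two simplices; you get it from the homotopy type instead.

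The genuine difference is the regularity upper bound. The paper recognizes $I$ as the edge ideal of the complete bipartite graph minus a perfect matching and invokes a known criterion for bipartite edge ideals to have regularity $3$. You instead compute the homotopy type of \emph{every} induced subcomplex by collapsing the two simplices, which leaves two points joined by $m$ parallel edges, and read $\reg(I)\le 3$ directly off Hochster's formula.

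Your route is self-contained and gives essentially the full Betti table, e.g.\ $\beta_{2n-3,2n}(I)=n-1$. It even makes \Cref{thm:pdim_upper_bound} unnecessary: $j\le 2n$ bounds the cubic strand, and a nonzero $\widetilde H_0$ forces $W$ to contain no matched pair, hence $j=|W|\le n$ on the quadratic strand. The paper's route is shorter but outsources the key regularity step.

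Your Step 1 is also a bit tighter than the paper's. Showing that every pseudo-monomial in $J_{\C_0}$ has $\sigma,\tau\neq\emptyset$ pins down $CF(J_{\C_0})$ exactly, whereas the paper only shows that the $x_i(1-x_j)$ generate and are minimal.

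One small citation fix: \Cref{rem:boolean-zeroset} only gives $V(J_\C)=\C$. The fact that a pseudo-monomial vanishing on $\C_0$ lies in $J_{\C_0}$ follows from the identity $x_i(1-x_j)=\sum_{w_i=1,\,w_j=0}\rho_w$ (as in the paper) or from \Cref{thm:rf-neural-ideal}.
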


We call $\C_0$  \emph{``all-or-nothing code"} on $n$ neurons since all the neurons either fire simultaneously or not.

\begin{figure}[ht]
        \centering
        \includegraphics[width=5cm]{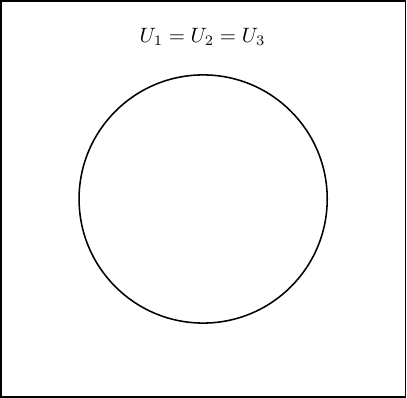}   
        \label{fig: A20}
    \caption{Realization  of all-or-nothing code $\C_0  = \{000, 111\}$ for $n=3$}
\end{figure}

\begin{proof}
Write $J=J_{\C_0}$ and $I=\P(J)$.
Since $\C_0=\{0^n,1^n\}$, every noncodeword $w\notin\C_0$ has at least one coordinate equal to $1$ and
at least one coordinate equal to $0$. Consequently, for each $w\notin\C_0$ the characteristic
pseudo-monomial $\rho_w$ is divisible by $x_i(1-x_j)$ for some ordered pair $i\neq j$ with
$w_i=1$ and $w_j=0$. Hence
\[
J=\langle \rho_w : w\notin\C_0\rangle \subseteq \langle x_i(1-x_j) : i\neq j\rangle.
\]

For the reverse inclusion, fix $i\neq j$ and sum $\rho_w$ over all words with $(w_i,w_j)=(1,0)$:
\[
\sum_{\substack{w:~~ w_i=1,~ w_j=0}}\rho_w
=
x_i(1-x_j)\prod_{k\neq i,j}\bigl(x_k+(1-x_k)\bigr)
=
x_i(1-x_j).
\]
Since each summand $\rho_w$ corresponds to a noncodeword, $x_i(1-x_j)\in J$ for all $i\neq j$. Therefore
\[
J=\langle x_i(1-x_j): i\neq j\rangle.
\]
In particular, these generators are minimal: no linear pseudo-monomial lies in $J$ because $\C_0$ is
nondegenerate, and $x_i(1-x_j)$ has only linear pseudo-monomial divisors. Hence
$CF(J)=\{x_i(1-x_j): i\neq j\}$. Polarizing sends $(1-x_j)$ to $y_j$, so
\[
I=\P(J)=\langle x_i y_j : i\neq j\rangle.
\]
Thus $I$ is the edge ideal of the bipartite graph $G$ on
$\{x_1,\dots,x_n\}\sqcup\{y_1,\dots,y_n\}$ with all edges present except the $n$ matching edges
$\{x_i,y_i\}$.
Equivalently, the Stanley--Reisner complex $\Delta=\Delta_{\C_0}$ has two $(n-1)$-simplices
$[x_1,\dots,x_n]$ and $[y_1,\dots,y_n]$, and also the $n$ edges $[x_i,y_i]$, each of which is a maximal
face.

We now show $\pd(I)=2n-3$. The upper bound $\pd(I)\le 2n-3$ is given by \Cref{thm:pdim_upper_bound}.
For the reverse inequality, it suffices to show $\beta_{2n-3,2n}(I)\neq 0$. By Hochster's formula,
\[
\beta_{2n-3,2n}(I)=\dim_k\widetilde H_{2n-(2n-3)-2}(\Delta;k)=\dim_k\widetilde H_{1}(\Delta;k).
\]
So it is enough to find a nonzero class in $\widetilde H_1(\Delta;k)$.

Fix distinct $i,j$ and consider the induced subcomplex on
$W=\{x_i,x_j,y_i,y_j\}$. In $\Delta_W$ the edges
\[
[x_i,x_j],\ [x_j,y_j],\ [y_j,y_i],\ [y_i,x_i]
\]
are present, while the cross-edges $[x_i,y_j]$ and $[x_j,y_i]$ are absent because
$x_i y_j$ and $x_j y_i$ are generators of $I$. Hence $\Delta_W$ is a $4$-cycle with no $2$-faces. Let
$z\in C_1(\Delta;k)$ be the corresponding $1$-cycle
\[
z=[x_i,y_i]+[y_i,y_j]+[y_j,x_j]+[x_j,x_i].
\]
Then $z\in\ker\partial_1$. Moreover, every $2$-simplex of $\Delta$ lies entirely in the $x$-simplex
or entirely in the $y$-simplex. In particular, the boundary of any $2$-chain is supported only on edges
among the $x$'s or only on edges among the $y$'s. Since $z$ contains the mixed edges $[x_i,y_i]$ and
$[x_j,y_j]$ with nonzero coefficients, it cannot lie in $\mathrm{im}(\partial_2)$. Thus $[z]\neq 0$ in
$\widetilde H_1(\Delta;k)$. So $\beta_{2n-3,2n}(I)\neq 0$ and $\pd(I)=2n-3$.

For regularity, since $I=I(G)$ is the edge ideal of a connected bipartite graph, we apply
\cite[Theorem~3.1]{fernándezramos}: one has $\reg I(G)=3$ if and only if the complement $G^c$ contains an
induced cycle of length at least $4$, and the bipartite complement $G^{bc}$ contains no induced cycle of
length at least $6$. Here $G^c$ contains the induced $4$-cycle
\[
x_i - x_j - y_j - y_i - x_i \qquad (i\neq j),
\]
and $G^{bc}$ is exactly the perfect matching $\{\{x_t,y_t\}:t\in[n]\}$, hence has no induced cycle of
length $\ge 6$. Therefore $\reg(I)=3$.
\end{proof}

All-or-nothing codes are a special case of antipodal pairs. We generalize the pattern observed in all-or-nothing codes as follows.

\begin{definition}\label{def:antipodal_pair}
For $v\in \F_2^n$, let $\overline v$ denote its bitwise complement.
We call
\[
\C_v  :=  \{v,\overline v\}\subseteq \F_2^n
\]
an \emph{antipodal pair code}.
\end{definition}

\begin{corollary}\label{cor:antipodal_pairs_maxpdim}
Let $\C_v=\{v,\bar v\}\subseteq\F_2^n$. Then
\[
\pd(\P(J_{\C_v}))=2n-3
\qquad\text{and}\qquad
\reg(\P(J_{\C_v}))=3.
\]
\end{corollary}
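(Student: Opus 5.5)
The plan is to reduce to \Cref{thm:all-or-nothing} by a relabeling symmetry of the Hamming cube. Fix $v\in\F_2^n$ and let $\sigma=\sigma(v)=\{i: v_i=1\}$. Consider the bit-flip map $\phi:\F_2^n\to\F_2^n$ that complements the coordinates outside $\sigma$. It is a bijection with $\phi(v)=1^n$ and $\phi(\bar v)=0^n$, so $\phi(\C_v)=\C_0$. It therefore suffices to show that flipping a coordinate $i$ of a code corresponds, on the polarized side, to swapping the variables $x_i$ and $y_i$. Such a swap is a graded automorphism of $S$, so it preserves all graded Betti numbers, and in particular $\pd$ and $\reg$.

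To justify the swap, work with the ring automorphism $\alpha_i$ of $R$ given by $x_i\mapsto 1-x_i$, with the other variables fixed. For a flipped code $\C'$ obtained from $\C$ by flipping coordinate $i$, we have $\rho_{w'}=\alpha_i(\rho_w)$, where $w'$ is $w$ with coordinate $i$ flipped. Since the noncodewords are flipped as well, $J_{\C'}=\alpha_i(J_\C)$.

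Moreover, $\alpha_i$ sends pseudo-monomials to pseudo-monomials bijectively: a factor $x_i$ becomes $(1-x_i)$ and vice versa. It also preserves divisibility among pseudo-monomials. Hence $\alpha_i$ maps the set of minimal pseudo-monomials of $J_\C$ bijectively onto that of $J_{\C'}$, that is, $CF(J_{\C'})=\alpha_i(CF(J_\C))$. Polarizing, each generator $x_\sigma y_\tau$ is replaced by the monomial with $x_i$ and $y_i$ interchanged. So $\P(J_{\C'})$ is the image of $\P(J_\C)$ under the $k$-algebra automorphism of $S$ swapping $x_i\leftrightarrow y_i$.

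Iterating this over all $i\notin\sigma$ shows that $\P(J_{\C_v})$ and $\P(J_{\C_0})$ differ by a permutation of variables. \Cref{thm:all-or-nothing} then gives $\pd=2n-3$ and $\reg=3$.

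The only step needing care is the claim that $\alpha_i$ preserves minimality among pseudo-monomials in the ideal. This holds because $\alpha_i$ is an invertible ring map that restricts to a divisibility-preserving bijection on pseudo-monomials. In particular, a proper pseudo-monomial divisor of $f$ lying in $J_\C$ maps to a proper pseudo-monomial divisor of $\alpha_i(f)$ lying in $J_{\C'}$, and conversely via $\alpha_i^{-1}=\alpha_i$.

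Alternatively, one could check directly that $CF(J_{\C_v})$ consists of the polarizations of the pairs $(i,j)$ with $i\neq j$, taking as generators the literal of $i$ disagreeing with $v$ times the literal of $j$ disagreeing with $\bar v$. This mirrors the computation in \Cref{thm:all-or-nothing}, but the symmetry argument avoids redoing it.
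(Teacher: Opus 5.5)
Your proof is correct, but it takes a different route from the paper's.

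\textbf{The paper's route.} The paper recomputes $CF(J_{\C_v})$ directly. For each pair $i\neq j$ it reads off the two patterns on $\{i,j\}$ that are missing from $\C_v$. It then checks that every $\rho_w$ with $w\notin\C_v$ is divisible by one of the resulting quadrics, and uses nondegeneracy to exclude linear pseudo-monomials. Only after this does it observe that $\P(J_{\C_v})=\langle a_ib_j : i\neq j\rangle$ becomes the all-or-nothing edge ideal under the relabeling $x_i\leftrightarrow y_i$ for $i\in\sigma(v)$. The paper swaps on $\sigma(v)$, sending $\C_0$ to $\C_v$; you flip the coordinates outside $\sigma(v)$, sending $\C_v$ to $\C_0$. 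Both choices work because $\C_v$ is antipodal.

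\textbf{Your route.} You prove a general transport-of-structure statement. The automorphism $x_i\mapsto 1-x_i$ of $R$ sends $J_\C$ to $J_{\C'}$, where $\C'$ is the code with coordinate $i$ flipped. It permutes pseudo-monomials and preserves divisibility in both directions, so it carries $CF(J_\C)$ onto $CF(J_{\C'})$. On the polarized side it becomes the graded automorphism $x_i\leftrightarrow y_i$ of $S$. This gives the same relabeling without recomputing any canonical form.

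\textbf{What each approach buys.} Your argument gives more than the corollary. Graded Betti numbers of $\P(J_\C)$, hence $\pd$ and $\reg$, are invariant under the full symmetry group of the Hamming cube (coordinate flips together with coordinate permutations), and multigraded Betti numbers are permuted accordingly. This is a reusable reduction: any statement about these invariants need only be checked on one representative of each orbit of codes. The paper's direct computation makes explicit that $CF(J_{\C_v})$ consists of the quadrics $a_ib_j$. That description also follows at once from your lemma applied to $CF(J_{\C_0})$.
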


\begin{proof}
Fix $i\neq j$. The restriction of $\C_v=\{v,\bar v\}$ to $\{i,j\}$ is
$\{(v_i,v_j),(1-v_i,1-v_j)\}$. Hence exactly two patterns on $\{i,j\}$ are missing. By
\Cref{thm:rf-neural-ideal}, the corresponding quadratic pseudo-monomials lie in $J_{\C_v}$, namely
\begin{equation}\label{eq:antipodal}\tag{$\star$}
\begin{aligned}
v_i=v_j \ &\Longrightarrow\ x_i(1-x_j),\ x_j(1-x_i)\in J_{\C_v},\\
v_i\neq v_j \ &\Longrightarrow\ x_ix_j,\ (1-x_i)(1-x_j)\in J_{\C_v}.
\end{aligned}
\end{equation}

(Equivalently, each can be obtained by summing $\rho_w$ over all words with the relevant missing
two-coordinate pattern, as in \Cref{thm:all-or-nothing}.)

Now let $w\notin\C_v$. Since $w\neq v,\bar v$, choose $i$ with $w_i=v_i$ and $j$ with $w_j\neq v_j$.
Then the restriction of $w$ to $\{i,j\}$ is one of the missing patterns above. So $\rho_w$ is divisible
by the corresponding  pseudo-monomial from \eqref{eq:antipodal}. Hence these pseudo-monomials in  \eqref{eq:antipodal} generate $J_{\C_v}$. Since $\C_v$ is
nondegenerate, no linear pseudo-monomial lies in $J_{\C_v}$. So this generating set is minimal under
divisibility and therefore equals $CF(J_{\C_v})$.

Polarizing yields that $\P(J_{\C_v})$ is generated by the corresponding quadrics
$x_i y_j,\ x_j y_i,\ x_ix_j,\ y_i y_j$.  Set $T=\{i\in[n]:v_i=1\}$. Define
\[
a_i=\begin{cases}x_i,& i\notin T,\\ y_i,& i\in T,\end{cases}
\qquad
b_i=\begin{cases}y_i,& i\notin T,\\ x_i,& i\in T.\end{cases}
\]
Then, we have  $\P(J_{\C_v})=\langle a_i b_j : i\neq j\rangle$ since 
\[
a_i b_j=
\begin{cases}
x_i y_j, & i\notin T,\ j\notin T,\\
x_i x_j, & i\notin T,\ j\in T,\\
y_i y_j, & i\in T,\ j\notin T,\\
y_i x_j, & i\in T,\ j\in T.
\end{cases}
\]
Let $A=\{a_1,\dots,a_n\}$ and $B=\{b_1,\dots,b_n\}$. Then $\P(J_{\C_v})$ is the edge ideal of the bipartite graph on $A\sqcup B$ with all edges except the
matching edges $\{a_i,b_i\}$, i.e.\ the same graph as in \Cref{thm:all-or-nothing} after relabeling
vertices.  Therefore $\P(J_{\C_v})$ has the same graded Betti numbers as $\P(J_{\C_0})$, and the stated
values of $\pd$ and $\reg$ follow from \Cref{thm:all-or-nothing}.
\end{proof}

\begin{remark}\label{rem:many_antipodal_examples}
There are $2^{n-1}$ distinct antipodal pair codes $\{v,\overline v\}$ on $n$ neurons.
Thus \Cref{cor:antipodal_pairs_maxpdim} provides exponentially many examples of neural codes
whose polarized neural ideals attain the maximal projective dimension $2n-3$ while having regularity $3$.
\end{remark}

We end this section with the following question.

\begin{question}\label{q:classify_maxpdim}
Classify all neural codes on $n$ neurons whose polarized neural ideals attain the maximal projective dimension $2n-3$.
\end{question}
\section{Maximum Regularity}\label{sec:regularity}

In this section we study the regularity of polarized neural ideals. Our goal is to describe the range of possible values of  $\reg(I)$, with particular emphasis on establishing a sharp upper bound and characterizing the neural codes for which this bound is attained.

\begin{lemma}\label{lem:reg_upper_bound}
Let \(\C \subseteq \F_2^n\) be a neural code on \(n\) neurons.
Then
\[
\reg(I)\le 2n-1.
\]
\end{lemma}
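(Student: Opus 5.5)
The plan is to bound $\reg(I)$ directly through Hochster's formula (\Cref{thm:hochster}). We may assume $\C$ is nonempty and $\C\neq\F_2^n$. If $\C=\emptyset$, then $I=S$ and $\reg(I)=0$ by \Cref{rem:degenerate-codes}. If $\C=\F_2^n$, then $I=(0)$, which is excluded throughout.

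Suppose $\beta_{i,j}(I)\neq 0$. Then there is $W\subseteq V(\Delta)$ with $|W|=j$ and $\widetilde H_{j-i-2}(\Delta_W;k)\neq 0$. Set $t=j-i-2$, so that $j-i=t+2$. It therefore suffices to show that $t\le 2n-3$ whenever $\widetilde H_t(\Delta_W;k)\neq 0$.

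First, a complex on $j$ vertices has reduced homology only in degrees at most $j-2$, unless it is the full simplex, which is acyclic. Also $\Delta_W\neq\emptyset$ since $W\subseteq V(\Delta)$, and $j\le|V(\Delta)|\le 2n$ by \Cref{rem:basic_degree_bounds}. Together these give $t\le j-2\le 2n-2$. Equality $t=2n-2$ would force $j=2n$, so $W=V(\Delta)$ consists of all $2n$ variables and $\Delta_W=\Delta$. It would also force $\widetilde H_{2n-2}(\Delta)\neq 0$. A complex on $2n$ vertices has nonzero top-possible homology $\widetilde H_{2n-2}$ only if it contains every $(2n-2)$-face. That means $\Delta$ is the boundary of the $(2n-1)$-simplex, i.e.\ $I=(x_1\cdots x_ny_1\cdots y_n)$. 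This is impossible, because every minimal generator of $I$ is transversal of degree at most $n$ (\Cref{rem:basic_degree_bounds}(a),(d)), while this monomial has degree $2n$ and is divisible by $x_1y_1$.

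Hence $t\le 2n-3$, so $j-i\le 2n-1$ and $\reg(I)\le 2n-1$. The only delicate step is the claim that nonvanishing of $\widetilde H_{|W|-2}$ forces $\Delta_W$ to be the boundary of a simplex. This follows because, with at most $|W|$ vertices, the top chain group $C_{|W|-2}$ has only the facets of the simplex available. Any nonzero cycle there must be a nonzero multiple of the full boundary cycle, so it needs all $(|W|-2)$-faces present and the $(|W|-1)$-face absent, which is exactly a minimal nonface of size $|W|$.
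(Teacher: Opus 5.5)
Your proof is correct and is essentially the paper's argument routed through Hochster's formula. Since $t=j-i-2\le j-2$ is just $i\ge 0$, the only case to exclude is $t=2n-2$, i.e.\ $(i,j)=(0,2n)$. Your topological step (that $\widetilde H_{|W|-2}(\Delta_W)\neq 0$ forces $W$ to be a minimal nonface) simply re-derives that $\beta_{0,j}(I)$ counts minimal generators of degree $j$.

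The paper disposes of that case in one line: minimal generators of $I$ have degree at most $n$, and the case $i\ge 1$ follows from $j\le 2n$. The only looseness in your write-up is restricting to $W\subseteq V(\Delta)$. For degenerate $\C$, Hochster's sum runs over subsets of all $2n$ variables, since some variables lie in $I$. This is harmless here: $\Delta$ then has fewer than $2n$ vertices and cannot have $\widetilde H_{2n-2}(\Delta)\neq 0$.
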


\begin{proof}
Recall from Remark~\ref{rem:basic_degree_bounds} (b) that  \(j\le 2n\) when \(\beta_{i,j}(I)\neq 0\). So \(j-i\le 2n-1\)  when $i\geq 1$.
For \(i=0\), Remark~\ref{rem:basic_degree_bounds} (c) gives \(\beta_{0,j}(I)=0\) for \(j>n\).
Hence no nonzero Betti number can occur with \(j-i>2n-1\). Therefore \(\reg(I)\le 2n-1\).
\end{proof}

The next proposition pins down where the extremal value \(\reg(I)=2n-1\) must appear in the Betti table.

\begin{proposition}\label{prop:regmax_beta12n}
If \(\reg(I)=2n-1\), then \(\beta_{1,2n}(I)\neq 0\).
\end{proposition}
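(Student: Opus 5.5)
The plan is to locate every Betti number $\beta_{i,j}(I)$ with $j-i=2n-1$ and show that only $(i,j)=(1,2n)$ survives. We start from Remark~\ref{rem:basic_degree_bounds}: nonzero Betti numbers satisfy $j\le 2n$. We also have $\beta_{0,j}(I)=0$ for $j>n$, and $n<2n-1$ once $n\ge 2$.

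So if $\reg(I)=2n-1$, there is a pair $(i,j)$ with $\beta_{i,j}(I)\neq 0$ and $j-i=2n-1$. The case $i=0$ would force $j=2n-1>n$, which is impossible by the bound on $\beta_{0,j}$. The case $i\ge 2$ would force $j=i+2n-1\ge 2n+1$, contradicting $j\le 2n$. Hence $i=1$ and $j=2n$, which is exactly the claim $\beta_{1,2n}(I)\neq 0$.

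As a consistency check via Hochster's formula (Theorem~\ref{thm:hochster}), $\beta_{1,2n}(I)=\dim_k\widetilde H_{2n-3}(\Delta;k)$ with $W=V(\Delta)$ and $|V(\Delta)|=2n$. This is compatible: it requires all $2n$ variables to be vertices, and the homology sits in top possible degree.

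There is essentially no obstacle; the only point needing care is the standing assumption $n\ge 2$, used to get $2n-1>n$. For $n=1$ the statement is vacuous, since there $\reg\le 1=2n-1$ is attained by $\langle x_1\rangle$ with $\beta_{1,2}=0$. That case is excluded by the paper's standing assumption $n\ge 2$.
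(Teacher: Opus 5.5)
Your argument is correct and is the same as the paper's: $j\le 2n$ forces $i\le 1$, and $i=0$ is excluded because $\beta_{0,j}(I)=0$ for $j>n$ while $2n-1>n$ when $n\ge 2$. One wording fix: for $n=1$ your example $\langle x_1\rangle$ (with $\reg=1=2n-1$ but $\beta_{1,2}=0$) shows the statement actually \emph{fails} rather than being vacuous, and that is why the standing assumption $n\ge 2$ is needed.
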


\begin{proof}
Assume \(\reg(I)=2n-1\). Then there exist \(i,j\) with \(\beta_{i,j}(I)\neq 0\) and \(j-i=2n-1\).
By Remark~\ref{rem:basic_degree_bounds} (b), we have \(j\le 2n\), so \(i\le 1\).
If \(i=0\), then \(j=2n-1>n\), contradicting Remark~\ref{rem:basic_degree_bounds} (c).
Thus \((i,j)=(1,2n)\) and \(\beta_{1,2n}(I)\neq 0\).
\end{proof}

\begin{lemma}\label{lem:reg-max-antipodal-noncodewords}
If \(\reg(I)=2n-1\), then \(\C\) omits an antipodal pair \(\{v,\overline v\}\subseteq\F_2^n\).
\end{lemma}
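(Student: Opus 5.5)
By \Cref{prop:regmax_beta12n}, $\reg(I)=2n-1$ forces $\beta_{1,2n}(I)\neq 0$. By Hochster's formula this means $|V(\Delta)|=2n$ and
\[
\widetilde H_{2n-3}(\Delta;k)\neq 0 .
\]
I will argue by contrapositive. Assume $\C$ contains at least one of $v,\overline v$ for every $v\in\F_2^n$; I will show $\widetilde H_{2n-3}(\Delta;k)=0$. It suffices to show that $\Delta$ has no $(2n-3)$-dimensional faces, or at least that the top-dimensional part of $\Delta$ cannot support a $(2n-3)$-cycle.

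\textbf{Step 1: describe large nonfaces via transversal monomials.} A subset $F\subseteq V$ is a face of $\Delta$ exactly when no canonical generator $\P(f)$ divides $x_F:=\prod_{u\in F}u$. A face of dimension $2n-3$ has $2n-2$ vertices, so it omits only two variables. Write it as $V\setminus\{a,b\}$. Every transversal monomial $m_w$ is a product of $n$ variables, and the monomial of $V\setminus\{a,b\}$ is divisible by $m_w$ unless $a$ or $b$ appears in $m_w$. Here is the key use of the hypothesis. For a noncodeword $w$, the pseudo-monomial $\rho_w\in J_\C$ is divisible by some canonical form element $f$, so $\P(f)\mid m_w$. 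Hence $V\setminus\{a,b\}$ is a face only if every noncodeword $w$ has $m_w$ involving $a$ or $b$.

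\textbf{Step 2: case analysis on $\{a,b\}$.} If $\{a,b\}=\{x_i,y_i\}$, every $m_w$ involves exactly one of them, so the condition holds automatically; these $n$ sets are the potential top faces. If $\{a,b\}$ is not such a pair, the set of $w$ whose $m_w$ avoids both $a$ and $b$ is a subcube of codimension $2$, obtained by fixing two coordinates. That subcube contains an antipodal pair only when $n\geq 3$ is not the obstacle; rather, I will use the fact that the set of $w$ avoiding $a,b$ has $2^{n-2}$ elements, and that by hypothesis at most half of all words are noncodewords. Which of these words are noncodewords depends on $\C$, so I will need to refine this.

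\textbf{Step 3: eliminate homology.} A cleaner route is to consider the vertex sets $V\setminus\{x_i,y_i\}$. Every $(2n-3)$-face contains, for each remaining index $j$, both $x_j$ and $y_j$. The hypothesis that $\C$ meets every antipodal pair should imply that for some index $i$, $\Delta$ is a cone with apex $x_i$ or $y_i$ on its top-dimensional skeleton, or more simply that $\Delta$ is not a $(2n-3)$-pseudomanifold cycle. Concretely, I would compute $\widetilde H_{2n-3}(\Delta)$ as the kernel of $\partial$ on the top chains. Any cycle needs each codimension-one face $V\setminus\{a,b,c\}$ to be covered by an even number of top faces, or with cancelling coefficients. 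I would then show that some top face $V\setminus\{x_i,y_i\}$ has a free boundary facet, and iterate this argument to kill every coefficient.

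\textbf{Main obstacle.} The hard part is translating the hypothesis on antipodal pairs into precise face and nonface information for sets of size $2n-2$ and $2n-3$. I expect the argument to run as follows. If $w\notin\C$, then $\overline w\in\C$, and $m_w\cdot m_{\overline w}=x_V$. This should show that the set $V\setminus\operatorname{supp}(m_{\overline w})$, which is the support of $m_w$, is the critical set. Dually, faces of $\Delta$ contain the supports of $m_c$ for codewords $c$. I would first try to show that $\widetilde H_{2n-3}(\Delta)\neq 0$ forces $\Delta$ to be the boundary-like complex that arises when $I$ contains $m_w$ and $m_{\overline w}$ as generators whose product is $x_V$. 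This would mean both $w$ and $\overline w$ are noncodewords, giving the required omitted antipodal pair.
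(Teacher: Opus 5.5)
Your reduction to $\widetilde H_{2n-3}(\Delta;k)\neq 0$ with $|V(\Delta)|=2n$ is correct, but nothing after it is actually proved. Steps 2--3 and your last paragraph are stated as intentions (``I will need to refine this'', ``should imply'', ``I would then show''). Worse, the structural premise of Step 3 is false. You assert that every $(2n-3)$-face ``contains, for each remaining index $j$, both $x_j$ and $y_j$'', i.e.\ that the only top-dimensional candidates are the sets $V\setminus\{x_i,y_i\}$. For a non-pair $\{a,b\}$, however, one checks that $V\setminus\{a,b\}$ is a face exactly when the codimension-two subcube of words $w$ with $m_w$ avoiding $a$ and $b$ lies in $\C$. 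Your hypothesis does not rule this out.

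For instance, $\C=\F_2^n\setminus\{1^n\}$ meets every antipodal pair and has $I=\langle x_1\cdots x_n\rangle$, yet $V\setminus\{x_1,x_2\}$ is a face. Even $V\setminus\{x_1\}$ is a face, so $2n-3$ need not be the top dimension. Consequently $\widetilde H_{2n-3}(\Delta;k)$ is not simply the kernel of $\partial$ on top chains. The ``free boundary facet, then iterate'' plan therefore has no foundation, and the contrapositive route as sketched does not close.

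The missing idea is to work with the multidegree rather than the topology of $\Delta$. Since $\beta_{1,2n}(I)\neq 0$ and $I$ is squarefree, we get $\beta_{1,u}(I)\neq 0$ for $u=x_1\cdots x_ny_1\cdots y_n$. The minimal resolution is a summand of the Taylor resolution, so $u=\lcm(m_1,m_2)$ for two minimal generators.

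Minimal generators are transversal, so covering both $x_i$ and $y_i$ for every $i$ forces $m_1=m_v$ and $m_2=m_{\overline v}$ for some $v$. Each minimal generator is $\P(f)$ for some $f\in CF(J_\C)$, and the only pseudo-monomial polarizing to $m_v$ is $\rho_v$. Hence $\rho_v,\rho_{\overline v}\in J_\C$, i.e.\ $v,\overline v\notin\C$. This is exactly the configuration you hoped to reach in your final paragraph, obtained in a few lines.

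If you prefer a topological version: Alexander duality gives $\widetilde H_{2n-3}(\Delta;k)\cong\widetilde H^0(\Delta^\vee;k)$. Two facets $V\setminus\supp(g_1)$ and $V\setminus\supp(g_2)$ of $\Delta^\vee$ lying in different components are disjoint, i.e.\ $\supp(g_1)\cup\supp(g_2)=V$. This yields the same conclusion.
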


\begin{proof}
Assume \(\reg(I)=2n-1\). By \Cref{prop:regmax_beta12n} we have
\(\beta_{1,2n}(I)\neq 0\). Hence there exists a squarefree monomial \(u\) of degree \(2n\) with
\(\beta_{1,u}(I)\neq 0\). So, \(u=x_1\cdots x_n y_1\cdots y_n\). For monomial ideals, a multidegree \(u\) occurring in homological degree \(1\)
is the least common multiple of two minimal generators. Thus there are minimal generators \(m_1,m_2\in I\)
with \(u=\lcm(m_1,m_2)\).

Recall that  every minimal generator of \(I\) is transversal
(no generator is divisible by \(x_i y_i\)). Therefore for each \(i\in[n]\), exactly one of \(x_i,y_i\)
divides \(m_1\), and \(m_2\) contains the other. In particular \(\deg(m_1)=\deg(m_2)=n\), and there is
\(\sigma\subseteq[n]\) such that
\[
m_1=x_\sigma y_{[n]\setminus\sigma},
\qquad
m_2=x_{[n]\setminus\sigma} y_\sigma.
\]

Let \(v\in\F_2^n\) be defined by \(v_i=1\) iff \(i\in\sigma\). Then \(m_1=\P(\rho_v)\) and
\(m_2=\P(\rho_{\overline v})\). Since \(\rho_w\in J_\C\) exactly when \(w\notin\C\), it follows that
\(v,\overline v\notin\C\).
\end{proof}

\begin{lemma}\label{lem:min-charpoly_implies_neighbors}
Let \(v\in\F_2^n\) and write \(\sigma=\{i:v_i=1\}\).
Suppose \(\rho_v=x_\sigma\prod_{j\notin\sigma}(1-x_j)\) is a minimal pseudo-monomial in \(J_\C\)
(i.e.\ no proper divisor of \(\rho_v\) lies in \(J_\C\)).
Then \(v\notin\C\), and every codeword at Hamming distance \(1\) from \(v\) belongs to \(\C\).
\end{lemma}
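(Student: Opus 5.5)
The first claim, $v\notin\C$, is immediate. Since $\rho_v\in J_\C$ and $\rho_v(v)=1$ by \Cref{rem:boolean-zeroset}, the point $v$ does not lie in $V(J_\C)=\C$. Hence $v\notin\C$.

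For the second claim I will argue by contraposition. Fix an index $k\in[n]$ and let $w$ be the word obtained from $v$ by flipping coordinate $k$, so $d(v,w)=1$. Suppose $w\notin\C$; I will show that a proper divisor of $\rho_v$ lies in $J_\C$, contradicting minimality.

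Since $w\notin\C$, the generator $\rho_w$ lies in $J_\C$. The pseudo-monomials $\rho_v$ and $\rho_w$ agree in every factor except the one indexed by $k$: one contains $x_k$ and the other contains $(1-x_k)$. Let
\[
g:=\prod_{i\in\sigma\setminus\{k\}}x_i\prod_{j\notin\sigma,\ j\neq k}(1-x_j)
\]
be the common part. Then
\[
\rho_v+\rho_w=g\bigl(x_k+(1-x_k)\bigr)=g,
\]
so $g\in J_\C$. This is the same summation trick used in \Cref{thm:all-or-nothing} and in the proof of \Cref{lem:DW-connected-largeW}.

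The pseudo-monomial $g$ is a proper divisor of $\rho_v$, obtained by deleting the factor $x_k$ or $(1-x_k)$. This contradicts the assumed minimality of $\rho_v$. Therefore $w\in\C$ for every neighbor $w$ of $v$.

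The edge case $n=1$ needs a remark: then $g=1$, and $1\in J_\C$ still counts as a proper divisor of $\rho_v$ lying in $J_\C$, so the argument goes through unchanged. No step seems to be a real obstacle. The only point requiring care is to interpret "proper divisor" as a pseudo-monomial divisor, which $g$ is.
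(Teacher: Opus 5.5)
Your proof is correct. It is essentially the contrapositive of the paper's argument, with one real difference in how the key step is justified.

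The paper argues directly. Deleting the $t$-th factor of $\rho_v$ gives a proper divisor $f_t$, and minimality gives $f_t\notin J_\C$. The paper then asserts that some codeword $c^{(t)}$ has $f_t(c^{(t)})=1$. Since $f_t$ equals $1$ only at $v$ and at the word obtained by flipping bit $t$, and $v\notin\C$, that codeword must be the neighbor. The assertion tacitly uses the fact that a pseudo-monomial vanishing on all of $\C$ lies in $J_\C$, which is not proved or cited at that point.

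Your identity $\rho_v+\rho_w=g$ is exactly the proof of this fact in the special case needed: $g$ is nonzero only on the two-point subcube $\{v,w\}$. So your version is self-contained, using only the definition of $J_\C$ by the generators $\rho_w$ with $w\notin\C$. The paper's version, in exchange, exhibits the neighboring codeword directly as a witness to non-membership.

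Your treatment of $n=1$ and of the meaning of ``proper divisor'' is fine. The point is that $g$ is a proper divisor of $\rho_v$ in both the polynomial and the pseudo-monomial sense.
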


\begin{proof}
Since \(\rho_v\in J_\C\) and \(\rho_v(v)=1\), we have \(v\notin V(J_\C)=\C\).

Fix an index \(t\in[n]\). Define
\[
f_t  := 
\begin{cases}
\rho_v/x_t, & \text{if } t\in\sigma  (v_t=1),\\[2pt]
\rho_v/(1-x_t), & \text{if } t\notin\sigma  (v_t=0).
\end{cases}
\]
Then \(f_t\) is a proper divisor of \(\rho_v\) and by minimality \(f_t\notin J_\C\).
Observe that \(f_t\notin J_\C\)
implies that there exists \(c^{(t)}\in\C\) on which \(f_t\) evaluates to \(1\) which means \(c\) satisfies the coordinate
constraints encoded by \(f_t\). Equivalently, \(c^{(t)}_i=v_i\) for all \(i\neq t\). Since \(v\notin\C\), we have \(c^{(t)}\neq v\),
and therefore \(c^{(t)}\) differs from \(v\) exactly in the \(t\)-th coordinate. Hence \(c^{(t)}\) is obtained from \(v\) by flipping the \(t\)-th bit. Thus \(c^{(t)}\) is a Hamming neighbor of \(v\) that lies in \(\C\).
As \(t\) was arbitrary, every Hamming neighbor of \(v\) belongs to \(\C\).
\end{proof}

\begin{corollary}\label{cor:regmax_neighbors}
Suppose \(\reg(I)=2n-1\). Then \(\C\) omits an antipodal pair \(\{v,\overline v\}\),
and every Hamming neighbor of \(v\) and of \(\overline v\) lies in \(\C\).
\end{corollary}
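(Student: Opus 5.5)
The corollary should follow by combining \Cref{lem:reg-max-antipodal-noncodewords} with \Cref{lem:min-charpoly_implies_neighbors}. The one thing to check is that the characteristic pseudo-monomials $\rho_v$ and $\rho_{\overline v}$ are \emph{minimal} pseudo-monomials in $J_\C$, that is, that they belong to $CF(J_\C)$.

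The plan is to revisit the proof of \Cref{lem:reg-max-antipodal-noncodewords}. From $\reg(I)=2n-1$, \Cref{prop:regmax_beta12n} gives $\beta_{1,2n}(I)\neq 0$. As in that proof, this produces two \emph{minimal generators} $m_1=x_\sigma y_{[n]\setminus\sigma}$ and $m_2=x_{[n]\setminus\sigma}y_\sigma$ of $I$ with $\lcm(m_1,m_2)=x_1\cdots x_ny_1\cdots y_n$. Setting $v$ to be the indicator vector of $\sigma$, we have $m_1=\P(\rho_v)$ and $m_2=\P(\rho_{\overline v})$. In particular $v,\overline v\notin\C$, which is the first assertion.

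Next I will show that $\rho_v$ is minimal in $J_\C$. Suppose some proper divisor $g$ of $\rho_v$ lies in $J_\C$. A proper divisor of a pseudo-monomial is again a pseudo-monomial, so by \Cref{def:polarization} its polarization $\P(g)$ lies in $I$. But $\P(g)$ is a proper divisor of $\P(\rho_v)=m_1$, because polarization of pseudo-monomials respects divisibility and is injective. This contradicts the fact that $m_1$ is a minimal generator of the monomial ideal $I$. Hence $\rho_v$ is minimal, and by the same argument with $m_2$ so is $\rho_{\overline v}$.

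Finally, applying \Cref{lem:min-charpoly_implies_neighbors} to $v$ and to $\overline v$ shows that every Hamming neighbor of each lies in $\C$.

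The only delicate step is the claim that $m_1,m_2$ are minimal generators rather than arbitrary elements of $I$. This holds because Taylor/lcm-lattice considerations put the multidegrees in homological degree $1$ at lcms of pairs of \emph{minimal} generators, which the earlier lemma already uses.
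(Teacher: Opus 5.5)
Your proposal is correct and follows the paper's proof essentially verbatim. You extract the minimal generators $m_1=\P(\rho_v)$ and $m_2=\P(\rho_{\overline v})$ from the proof of \Cref{lem:reg-max-antipodal-noncodewords}, deduce that $\rho_v,\rho_{\overline v}$ are minimal pseudo-monomials in $J_\C$ because polarization preserves proper divisibility, and then apply \Cref{lem:min-charpoly_implies_neighbors}. The only cosmetic difference is that you use directly that $\P(g)\in I$ for any pseudo-monomial $g\in J_\C$ (by the definition of $\P(J_\C)$), whereas the paper first passes to an element of $CF(J_\C)$ that properly divides $\rho_v$.
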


\begin{proof}
It follows from \Cref{lem:reg-max-antipodal-noncodewords} and its proof that \(\C\) omits an antipodal pair \(\{v,\overline v\}\) and
\(\P(\rho_v),\P(\rho_{\overline v})\) are minimal generators of \(I\).
In particular, \(\rho_v\) and \(\rho_{\overline v}\) are minimal pseudo-monomials in \(J_\C\). 
Otherwise some \(f\in CF(J_\C)\) would properly divide \(\rho_v\) (respectively \(\rho_{\overline v}\)),
and then \(\P(f)\) would properly divide \(\P(\rho_v)\) (respectively \(\P(\rho_{\overline v})\)),
contradicting minimality in \(I\).
Then applying \Cref{lem:min-charpoly_implies_neighbors} to \(\rho_v\) and \(\rho_{\overline v}\) completes the proof.
\end{proof}

We now show that the upper bound in \Cref{lem:reg_upper_bound} is sharp, and we classify all codes achieving it.

\begin{theorem}\label{thm:maxreg_classification}
Let \(\C\subseteq \F_2^n\) be a nonempty neural code.
Then the following are equivalent:
\begin{enumerate}
\item \(\reg(I)=2n-1\);
\item \(I\) is minimally generated by two complementary degree-\(n\) monomials
\[
I=\langle x_\sigma y_{\sigma^c},  x_{\sigma^c}y_\sigma\rangle
\]
for some \(\sigma\subseteq [n]\);
\item \(\C=\F_2^n\setminus \{v,\overline v\}\) for some \(v\in\F_2^n\).
\end{enumerate}
In particular, if \(\reg(I)=2n-1\) then \(\pd(I)=1\).
\end{theorem}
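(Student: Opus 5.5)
I will prove the cycle of implications (3)$\Rightarrow$(2)$\Rightarrow$(1)$\Rightarrow$(3), and then read off $\pd(I)=1$ from (2).

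\textbf{(3)$\Rightarrow$(2).} Let $\C=\F_2^n\setminus\{v,\overline v\}$ and $\sigma=\{i:v_i=1\}$. Then $J_\C=\langle\rho_v,\rho_{\overline v}\rangle$. To identify $CF(J_\C)$, I will argue as follows. A pseudo-monomial $f=x_\alpha\prod_{j\in\beta}(1-x_j)$ lies in $J_\C$ iff it vanishes on $\C$. Equivalently, every $w$ with $w_\alpha=1$ and $w_\beta=0$ lies in $\{v,\overline v\}$.
\begin{itemize}
\item If $|\alpha\cup\beta|=n$, then exactly one word satisfies the constraints, and $f=\rho_w$ for $w\in\{v,\overline v\}$.
\item If $|\alpha\cup\beta|<n$ and $n\ge 2$, at least two words satisfy the constraints. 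These agree on the constrained coordinate set, which is nonempty or not. If it is nonempty, $v$ and $\overline v$ disagree everywhere, so the two words cannot be exactly $\{v,\overline v\}$. If it is empty, all $2^n>2$ words satisfy the constraints. In either case some satisfying word lies in $\C$, so $f\notin J_\C$.
\end{itemize}
Hence $CF(J_\C)=\{\rho_v,\rho_{\overline v}\}$, and polarizing gives $I=\langle x_\sigma y_{\sigma^c},\,x_{\sigma^c}y_\sigma\rangle$. These two generators are incomparable, so the generation is minimal.

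\textbf{(2)$\Rightarrow$(1) and $\pd(I)=1$.} For two monomials $m_1,m_2$ with $\gcd(m_1,m_2)=1$, the minimal resolution is the Koszul-type complex
\[
0\to S(-2n)\to S(-n)^2\to I\to 0,
\]
since the syzygy module of $\langle m_1,m_2\rangle$ is generated in degree $\deg\lcm(m_1,m_2)=2n$. Thus $\beta_{0,n}(I)=2$ and $\beta_{1,2n}(I)=1$, with all other Betti numbers zero. This gives $\reg(I)=\max(n,2n-1)=2n-1$ and $\pd(I)=1$. The final sentence of the theorem then follows once (1)$\Rightarrow$(2) is established.

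\textbf{(1)$\Rightarrow$(3).} This is the main step. By \Cref{cor:regmax_neighbors} and the proof of \Cref{lem:reg-max-antipodal-noncodewords}, we get $v,\overline v\notin\C$, and $\P(\rho_v),\P(\rho_{\overline v})$ are minimal generators with $\beta_{1,2n}(I)\neq0$. It remains to show that no other word is missing.

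By Hochster's formula, $\beta_{1,2n}(I)=\dim\widetilde H_{2n-3}(\Delta;k)$, with $V(\Delta)$ all $2n$ variables. I plan to show directly that any third noncodeword $w$ kills this homology. Suppose $w\notin\C$ with $w\neq v,\overline v$. Then some pseudo-monomial $f\in CF(J_\C)$ divides $\rho_w$. Its polarization $\P(f)$ is a generator of $I$ distinct from $m_1=\P(\rho_v)$ and $m_2=\P(\rho_{\overline v})$.

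Rather than compute homology, I will use the Taylor/lcm-lattice description of $\beta_{1,u}$: a squarefree multidegree $u$ has $\beta_{1,u}(I)\ne0$ only if $u$ is an atom-pair join in the lcm lattice that is not the lcm of a chain through another generator. Concretely, $\beta_{1,u}(I)$ is the reduced $\widetilde H_0$ of the graph on the generators dividing $u$, where two generators are joined when their lcm is strictly smaller than $u$.

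For $u=x_1\cdots y_n$, every generator divides $u$. Take $g=\P(f)$. Since $g$ is transversal of degree less than $n$ or different from both $m_1$ and $m_2$, it satisfies $\lcm(g,m_1)\ne u$ or $\lcm(g,m_2)\neq u$. In fact, for any transversal $g\ne m_1,m_2$, I expect that $\lcm(g,m_i)=u$ would force $g$ to be the complementary transversal $m_{3-i}$, or at least to have full degree $n$ complementary to $m_i$. So $g$ is adjacent to both $m_1$ and $m_2$. Similarly, every other generator is adjacent to $m_1$ or $m_2$, so the graph is connected and $\beta_{1,2n}(I)=0$, a contradiction.

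The delicate point is verifying that $\lcm(g,m_1)\neq u$ and $\lcm(g,m_2)\neq u$ simultaneously for every generator $g\notin\{m_1,m_2\}$. Since $m_1$ and $m_2$ are complementary, $\lcm(g,m_1)=u$ requires $g\supseteq m_2$, which means $g=m_2$ by minimality. This is where I expect the main care to be needed.

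Hence $\C=\F_2^n\setminus\{v,\overline v\}$, which closes the cycle of implications.
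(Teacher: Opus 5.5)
Your proof is correct, and for the hard direction it takes a genuinely different route from the paper. The paper proves (1)$\Rightarrow$(2) on the complex side. From the complementary generators $m_1,m_2$ with supports $A\sqcup B=V$, it gets $\Delta\subseteq\Delta_0=\partial(\Delta_A)*\partial(\Delta_B)$, a $(2n-3)$-sphere. It then shows by an orientation argument that every $(2n-3)$-cycle of $\Delta_0$ has nonzero coefficient on every facet $V\setminus\{a,b\}$. Hochster's formula then gives that $\beta_{1,2n}(I)\neq 0$ forces $\Delta=\Delta_0$, and (3) is read off from (2).

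You work on the generator side instead. You use that $\beta_{1,u}(I)$ is one less than the number of components of the graph on the minimal generators dividing $u$, with edges when the lcm properly divides $u$. Any third minimal generator $g$ is adjacent to both $m_1$ and $m_2$, since $\lcm(g,m_1)=u$ forces $m_2\mid g$, hence $g=m_2$. This is shorter and avoids orientations and the sphere altogether.

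The two arguments are in fact dual. A facet $V\setminus\{a,b\}$ is missing from $\Delta$ exactly when some minimal generator is supported in $V\setminus\{a,b\}$, which happens exactly when some generator is adjacent to both $m_1$ and $m_2$. The paper's version buys an explicit description of the extremal polar complex as a join of two boundary spheres, in keeping with its Hochster-formula viewpoint; yours isolates the purely combinatorial reason. Your (3)$\Rightarrow$(2) is also more complete than the paper's, which simply asserts $CF(J_\C)=\{\rho_v,\rho_{\overline v}\}$.

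When writing it up, tighten three points:
\begin{itemize}
\item Replace the hedged ``I expect'' sentence, and the sentence with ``or'' where ``and'' is needed, by your final one-line argument.
\item Cite or prove the graph formula. It follows from counting degree-$u$ syzygies: there are $N-1$ in total, of which $N-\#\{\text{components}\}$ come from lower degrees.
\item Say why $g=\P(f)$ is a minimal generator different from $m_1,m_2$. Polarization reflects divisibility among pseudo-monomials, so $\P(CF(J_\C))$ is exactly the minimal generating set of $I$. Moreover, $f=\rho_v$ would give $\rho_v\mid\rho_w$, forcing $w=v$.
\end{itemize}
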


\begin{figure}[ht]
   \centering
        \includegraphics[width = 5cm]{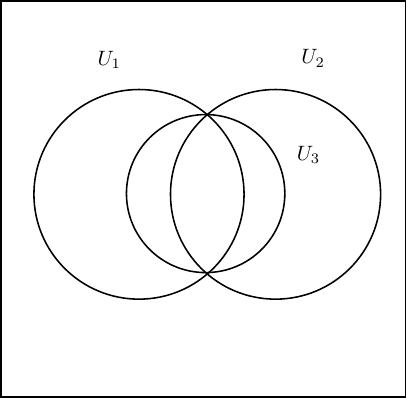}
        \caption{Realization of $\C = \F_2^3 \setminus \{ 001, 110\}$}
        \label{fig : A10}
 \end{figure}

 \begin{proof}
Set \(V=\{x_1,\dots,x_n,y_1,\dots,y_n\}\).

\((2)\Rightarrow(1)\):
Assume \(I=\langle m_1,m_2\rangle\) with \(m_1=x_\sigma y_{\sigma^c}\) and \(m_2=x_{\sigma^c}y_\sigma\).
Then \(\gcd(m_1,m_2)=1\) and the Taylor resolution of $I$ is minimal. 
Hence \(\reg(I)=\max\{ n, 2n-1 \}=2n-1\) and \(\pd(I)=1\).

\((1)\Rightarrow(2)\):
Assume \(\reg(I)=2n-1\). By \Cref{prop:regmax_beta12n} we have \(\beta_{1,2n}(I)\neq 0\), and by
\Cref{lem:reg-max-antipodal-noncodewords} there exist complementary minimal generators
\[
m_1=x_\sigma y_{\sigma^c},\qquad m_2=x_{\sigma^c}y_\sigma
\]
for some \(\sigma\subseteq[n]\). Let \(A=\supp(m_1)\) and \(B=\supp(m_2)\). Then \(V=A\sqcup B\) and
\(|A|=|B|=n\). In particular,  \(V(\Delta)=V\) for the Stanley--Reisner complex \(\Delta\) with \(I=I_\Delta\).
Since \(m_1\) and \(m_2\) are minimal generators, \(A\) and \(B\) are minimal nonfaces of \(\Delta\).
Thus no face of \(\Delta\) can contain all of \(A\) or all of \(B\).  Equivalently,
\[
\Delta \subseteq \Delta_0  := \{F\subseteq V : A\nsubseteq F,  B\nsubseteq F\}.
\]
Let $\Delta_A$ and $\Delta_B$ be the full simplices on $A$ and $B$.  Then
\[
\Delta_0=\partial(\Delta_A)*\partial(\Delta_B),
\]
the join of the boundary spheres of $\Delta_A$ and $\Delta_B$. Its facets are exactly
\(F_{a,b}:=V\setminus\{a,b\}\) with \(a\in A\) and \(b\in B\).
In particular, \(\Delta_0\) is a simplicial \((2n-3)\)-sphere, so
\(\widetilde H_{2n-3}(\Delta_0;k)\cong k\).

\begin{claim}\label{clm:topcycle-uses-all-facets-regsec}
Every \((2n-3)\)-cycle in \(\Delta_0\) (with coefficients in \(k\)) is a scalar multiple of
\(\sum_{a\in A,  b\in B} F_{a,b}\). In particular, any nonzero \((2n-3)\)-cycle in \(\Delta_0\)
has nonzero coefficient on every facet \(F_{a,b}\).
\end{claim}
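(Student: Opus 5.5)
Since $\Delta_0$ is $(2n-3)$-dimensional, there are no $(2n-2)$-faces. Hence $\widetilde H_{2n-3}(\Delta_0;k)=\ker\partial_{2n-3}$, and this is one-dimensional because $\Delta_0$ is a $(2n-3)$-sphere. I would not rely only on the sphere fact, however. I plan to prove the claim directly by a local argument on ridges, which also gives the form of the generator.

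The key step is a \emph{pseudomanifold} observation: every codimension-one face (ridge) of $\Delta_0$ lies in exactly two facets. A ridge has the form $V\setminus\{a,b,c\}$. Since $A\nsubseteq F$ and $B\nsubseteq F$ must hold for faces, any face omits at least one vertex of $A$ and one of $B$. So a ridge omits either two vertices $a,a'\in A$ and one $b\in B$, and is contained exactly in $F_{a,b}$ and $F_{a',b}$, or one $a\in A$ and two $b,b'\in B$, and is contained exactly in $F_{a,b}$ and $F_{a,b'}$.

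Now let $z=\sum c_{a,b}F_{a,b}$ be a $(2n-3)$-cycle, with the facets oriented by a fixed ordering of $V$. The coefficient of the ridge $V\setminus\{a,a',b\}$ in $\partial z$ is $\pm c_{a,b}\pm c_{a',b}$. It must vanish, so $|c_{a,b}|$ (more precisely, the coefficient up to a sign determined by orientations) agrees along the ``$A$-moves''. Similarly it agrees along the ``$B$-moves''. The graph on pairs $(a,b)$ with these moves is $K_n\square K_n$, which is connected since $n\ge 1$. So all coefficients are determined by one of them, say $c_{a_0,b_0}$, and $\ker\partial_{2n-3}$ has dimension at most one.

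For nonvanishing, I would use that the fundamental class of the join of two boundary spheres is (up to sign conventions) the join of their fundamental classes $\sum_a\pm(A\setminus a)$ and $\sum_b\pm(B\setminus b)$. That join is a nonzero cycle whose coefficient on $F_{a,b}=(A\setminus a)\cup(B\setminus b)$ is $\pm1$ for every pair. Combined with the dimension bound, every cycle is a scalar multiple of it, i.e.\ of $\sum F_{a,b}$ with the induced orientation signs, which is how I read the claim's notation. In particular, a nonzero cycle has a nonzero coefficient on every facet.

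The main obstacle is the sign bookkeeping: the statement writes the sum without signs. I would handle this by interpreting $F_{a,b}$ as oriented facets in the coherent orientation coming from the join, and then checking that the connectivity argument propagates a nonzero coefficient to every facet. The nonvanishing part of the claim is sign-independent, and that is all the later argument needs.
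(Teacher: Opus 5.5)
Your proof is correct and takes essentially the same approach as the paper's. In both, each ridge $V\setminus\{a,a',b\}$ (resp.\ $V\setminus\{a,b,b'\}$) lies in exactly two facets, so $\partial z=0$ ties $c_{a,b}$ to $c_{a',b}$ (resp.\ $c_{a,b'}$), and these moves connect all facets. The only difference is sign bookkeeping: the paper fixes a coherent orientation of the sphere $\Delta_0$, so the relations read $c_{a,b}=c_{a',b}$ directly, whereas you order the vertices arbitrarily and then exhibit the coherent orientation as the join of the two boundary fundamental classes, a slightly more self-contained justification of the paper's unsigned notation $\sum F_{a,b}$.
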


\begin{proof}[Proof of Claim~\ref{clm:topcycle-uses-all-facets-regsec}]
Fix an orientation of the simplicial sphere \(\Delta_0\), so that each ridge is contained in two
facets with opposite induced orientations. Write a \((2n-3)\)-chain as
\(z=\sum_{a\in A,  b\in B} c_{a,b} F_{a,b}\).
Fix \(b\in B\) and distinct \(a,a'\in A\). The ridge \(R=V\setminus\{a,a',b\}\) lies in exactly two facets,
\(F_{a,b}\) and \(F_{a',b}\), and their induced orientations on \(R\) are opposite. Hence the coefficient
of \(R\) in \(\partial z\) is \(\pm(c_{a,b}-c_{a',b})\). If \(\partial z=0\), then \(c_{a,b}=c_{a',b}\).
Thus for fixed \(b\), the coefficient \(c_{a,b}\) is independent of \(a\). A symmetric argument (fix \(a\)
and vary \(b\)) shows the coefficients are also independent of \(b\). Therefore all \(c_{a,b}\) are equal
to a common scalar \(\lambda\), and the claim follows.
\end{proof}

If \(\Delta\neq\Delta_0\), then \(\Delta\) omits some facet \(F_{a,b}\) of \(\Delta_0\).
Let \(z\) be any \((2n-3)\)-cycle in \(\Delta\). Viewing \(z\) as a \((2n-3)\)-chain in \(\Delta_0\)
(by assigning coefficient \(0\) to every facet of \(\Delta_0\setminus\Delta\)), we obtain a
\((2n-3)\)-cycle in \(\Delta_0\) whose coefficient on \(F_{a,b}\) is \(0\). By
Claim~\ref{clm:topcycle-uses-all-facets-regsec}, this forces \(z=0\). Hence
\(\widetilde H_{2n-3}(\Delta;k)=0\).
Applying Hochster's formula at \((i,j)=(1,2n)\),
and using that the only subset \(W\subseteq V\) with \(|W|=2n\) is \(W=V\), we obtain
\[
\beta_{1,2n}(I)=\dim_k \widetilde H_{2n-3}(\Delta;k)=0,
\]
contradicting \(\beta_{1,2n}(I)\neq 0\). Therefore \(\Delta=\Delta_0\). So \(A\) and \(B\) are the only minimal
nonfaces of \(\Delta\). Equivalently, \(I\) has exactly the two minimal generators \(m_1,m_2\), proving~(2).

\((2)\Rightarrow(3)\):
Assume \(I=\langle x_\sigma y_{\sigma^c}, x_{\sigma^c}y_\sigma\rangle\) minimally and write
\(m_1=x_\sigma y_{\sigma^c}\), \(m_2=x_{\sigma^c}y_\sigma\).
By definition of polarization and canonical form, \(I=\langle \P(f): f\in CF(J_\C)\rangle\), and each
\(\P(f)\) is a transversal squarefree monomial of degree at most \( n\).
Since \(m_1,m_2\) are the only minimal monomial generators of \(I\), every \(\P(f)\) is divisible by \(m_1\)
or \(m_2\). The degree bound \(\deg \P(f)\le n=\deg(m_i)\) forces \(\P(f)\in\{m_1,m_2\}\).
Hence \(CF(J_\C)=\{\rho_v,\rho_{\overline v}\}\), where \(v_i=1\) iff \(i\in\sigma\).

Now let \(w\notin\C\). Then \(\rho_w\in J_\C\). Among the pseudo-monomial divisors of \(\rho_w\) that lie in \(J_\C\),
choose one minimal under divisibility. This divisor lies in \(CF(J_\C)\), so it is \(\rho_v\) or \(\rho_{\overline v}\),
and in particular it divides \(\rho_w\). Since characteristic pseudo-monomials contain exactly one factor from
\(\{x_i,1-x_i\}\) in each coordinate, divisibility \(\rho_u\mid \rho_w\) forces \(u=w\). Therefore
\(w\in\{v,\overline v\}\), and \(\C=\F_2^n\setminus\{v,\overline v\}\).

\((3)\Rightarrow(2)\):
If \(\C=\F_2^n\setminus\{v,\overline v\}\), then \(CF(J_\C)=\{\rho_v,\rho_{\overline v}\}\),
and polarization gives
\(
I=\langle \P(\rho_v),\P(\rho_{\overline v})\rangle
=\langle x_\sigma y_{\sigma^c}, x_{\sigma^c}y_\sigma\rangle,
\)
where \(\sigma=\{i:v_i=1\}\).

The final statement \(\pd(I)=1\) follows from \((2)\Rightarrow(1)\).
\end{proof}

\section{Small projective dimension: $\pd (I)\leq 1$}\label{sec:small-pdim}

In this section we study polarized neural ideals of small projective dimension. We first classify the codes
$\C\subseteq\F_2^n$ for which $\pd(I)=0$, and then give a family of codes with $\pd(I)=1$ whose regularity
ranges across $[1,2n-1]$.

\subsection{Classification of neural codes with projective dimension $0$}\label{sec:pdim0}

We begin by classifying neural codes $\C\subseteq\F_2^n$ whose polarized neural ideal $I$ has projective
dimension $0$.

\begin{theorem}\label{prop:pdim0_classification}
Let $\C\subseteq \F_2^n$ be a neural code. 
Then the following are equivalent:
\begin{enumerate}
\item $\pd(I)=0$;
\item $I$ is principal;
\item there exist disjoint subsets $\sigma,\tau\subseteq[n]$ with $\sigma\cup\tau\neq\emptyset$ such that
\[
I=\langle x_\sigma y_\tau\rangle;
\]
\item there exist disjoint $\sigma,\tau\subseteq[n]$ with $\sigma\cup\tau\neq\emptyset$ such that $\C=\F_2^n\setminus Q(\sigma,\tau)$ where
\[
Q(\sigma,\tau):=\{v\in \F_2^n:\ v_i=1\ (i\in\sigma),\ v_j=0\ (j\in\tau)\}.
\]
\end{enumerate}
Moreover, writing $r:=|\sigma|+|\tau|$, we have $1\le r\le n$ and $\reg(I)=r$.
\end{theorem}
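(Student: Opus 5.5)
We prove the cycle $(1)\Rightarrow(2)\Rightarrow(3)\Rightarrow(4)\Rightarrow(2)\Rightarrow(1)$, together with the regularity computation. Throughout, $\C\neq\F_2^n$ is assumed, so $I\neq(0)$. We also assume $\C\neq\emptyset$, so that $I\neq S$; this is needed for condition (4) to make sense with $\sigma\cup\tau\neq\emptyset$.

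\textbf{The equivalences (1)$\Leftrightarrow$(2)$\Leftrightarrow$(3).}
\begin{enumerate}
\item For $(1)\Rightarrow(2)$: if $\pd(I)=0$, then $I$ is a free $S$-module. A nonzero ideal that is free has rank one, since any two elements $f,g$ satisfy the syzygy $gf-fg=0$. Hence $I$ is principal.
\item For $(2)\Rightarrow(1)$: a principal ideal $I=(m)$ is isomorphic to $S(-\deg m)$, so it is free and $\pd(I)=0$.
\item For $(2)\Rightarrow(3)$: a principal squarefree monomial ideal is generated by a single squarefree monomial $m$. By \Cref{rem:basic_degree_bounds}(d), every minimal generator is transversal, so $m=x_\sigma y_\tau$ with $\sigma\cap\tau=\emptyset$. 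Since $I\neq S$, we have $\sigma\cup\tau\neq\emptyset$.
\item For $(3)\Rightarrow(2)$: this is immediate.
\end{enumerate}

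\textbf{The code-theoretic step $(3)\Leftrightarrow(4)$.} Set $f=x_\sigma\prod_{j\in\tau}(1-x_j)$. Then $f$ is exactly the sum of $\rho_w$ over all $w\in Q(\sigma,\tau)$, by expanding $\prod_{k\notin\sigma\cup\tau}(x_k+(1-x_k))=1$ as in the proof of \Cref{thm:all-or-nothing}. Also, $f(w)=1$ if $w\in Q(\sigma,\tau)$ and $f(w)=0$ otherwise.

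For $(4)\Rightarrow(3)$, assume $\C=\F_2^n\setminus Q(\sigma,\tau)$.
\begin{enumerate}
\item Every noncodeword lies in $Q(\sigma,\tau)$, and the sum above uses only these, so $f\in J_\C$.
\item Every noncodeword $w$ has $\rho_w$ divisible by $f$, so $J_\C=\langle f\rangle$.
\item Any pseudo-monomial $g\in J_\C$ vanishes on $\C$. Its nonvanishing set on $\F_2^n$ is a subcube $Q(\sigma',\tau')$, which must therefore lie inside $Q(\sigma,\tau)$. This forces $\sigma\subseteq\sigma'$ and $\tau\subseteq\tau'$, so $f\mid g$.
\item Hence $CF(J_\C)=\{f\}$, and by \cite[Theorem 3.2]{gjs} we get $I=\langle x_\sigma y_\tau\rangle$.
\end{enumerate}

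For $(3)\Rightarrow(4)$, assume $I=\langle x_\sigma y_\tau\rangle$. Then every element of $CF(J_\C)$ polarizes to a multiple of $x_\sigma y_\tau$. Since depolarization preserves divisibility, $f$ divides every canonical form generator. Moreover, $f\in J_\C$, because $x_\sigma y_\tau$ is itself the polarization of some canonical form generator, and the only such generator is $f$. Therefore $J_\C=\langle f\rangle$. Applying \Cref{rem:boolean-zeroset}, $\C=V(J_\C)$ is the set of $w$ with $f(w)=0$, which is $\F_2^n\setminus Q(\sigma,\tau)$.

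\textbf{Regularity.} Since $I=(m)$ with $\deg m=r$, we have $I\cong S(-r)$. Thus the only nonzero Betti number is $\beta_{0,r}(I)=1$, giving $\reg(I)=r$. The bound $r\le n$ holds because $\sigma$ and $\tau$ are disjoint subsets of $[n]$, and $r\ge1$ holds because $\sigma\cup\tau\neq\emptyset$.

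\textbf{Main obstacle.} The delicate point is the minimality argument in $(4)\Rightarrow(3)$: showing that no proper divisor of $f$ lies in $J_\C$. This is handled by the subcube-containment argument in item 3 above. The key input is that a pseudo-monomial lies in $J_\C$ exactly when its support cube avoids $\C$, which is \Cref{thm:rf-neural-ideal} rephrased through \Cref{rem:boolean-zeroset}.
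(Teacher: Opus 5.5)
Your proposal is correct and follows essentially the same route as the paper: (1)$\Leftrightarrow$(2) via freeness, (2)$\Leftrightarrow$(3) via transversality of the single generator, (3)$\Leftrightarrow$(4) via $f=\sum_{v\in Q(\sigma,\tau)}\rho_v$ together with evaluation of $f$ on $\F_2^n$, and the regularity from $I\cong S(-r)$. Your subcube-containment argument for $CF(J_\C)=\{f\}$ is a more explicit version of the paper's one-line minimality claim, and your stated assumption $\C\neq\emptyset$ makes precise a hypothesis the paper leaves implicit.
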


\begin{proof}
$(1)\Leftrightarrow(2)$:
A finitely generated graded $S$--module has projective dimension $0$ if and only if it is free, i.e. $I\cong S(-d)$ for some $d$ which is equivalent to  $I$ is principal.

$(2)\Leftrightarrow(3)$:
Assume $I=\langle m\rangle$ is principal.
Then the unique minimal monomial generator $m$  is transversal in the sense of \Cref{def:transversal_monomial} (squarefree and $x_i y_i\nmid m$ for all $i\in[n]$).
Writing
\[
\sigma:=\{i : x_i \mid m\},\qquad \tau:=\{j : y_j \mid m\},
\]
we have $m=x_\sigma y_\tau$ with $\sigma\cap\tau=\emptyset$.
The implication $(3)\Rightarrow(2)$ is immediate.

$(3)\Rightarrow(4)$:
If $I=\langle x_\sigma y_\tau\rangle$, then depolarizing gives
\[
J_\C=\langle f\rangle
\qquad\text{where}\qquad
f=x_\sigma\prod_{j\in\tau}(1-x_j).
\]
For $v\in\F_2^n$ we have
\[
f(v)=\prod_{i\in\sigma} v_i \cdot \prod_{j\in\tau} (1-v_j)\in\{0,1\}.
\]
So $f(v)=1$ if and only if $v\in Q(\sigma,\tau)$ and $f(v)=0$ otherwise. Since $V(J_\C)=\C$,
we obtain
\[
\C=\{v\in\F_2^n:\ f(v)=0\}=\F_2^n\setminus Q(\sigma,\tau).
\]

$(4)\Rightarrow(3)$:
Assume $\C=\F_2^n\setminus Q(\sigma,\tau)$ and set $f=x_\sigma\prod_{j\in\tau}(1-x_j)$.
If $v\in Q(\sigma,\tau)$, then $\rho_v$ is divisible by $f$.  Hence $J_\C\subseteq\langle f\rangle$.
Conversely,
\[
\sum_{v\in Q(\sigma,\tau)} \rho_v
=
x_\sigma\prod_{j\in\tau}(1-x_j)\prod_{\ell\notin\sigma\cup\tau}\bigl(x_\ell+(1-x_\ell)\bigr)
=f.
\]
So $f\in J_\C$ and therefore $J_\C=\langle f\rangle$.
Moreover, $f$ is minimal under divisibility among pseudo-monomials in $J_\C$ (dropping any factor yields a
pseudo-monomial that does not vanish on some codeword). Hence $CF(J_\C)=\{f\}$ and $J_\C=\langle f\rangle$ is in canonical form.
Polarizing gives $I=\langle x_\sigma y_\tau\rangle$.

Finally, if $I=\langle x_\sigma y_\tau\rangle$ is principal generated in degree
$r:=|\sigma|+|\tau|$, then its minimal free resolution is $0\to S(-r)\to I\to 0$, so $\reg(I)=r$.
Since $\sigma\cap\tau=\varnothing$ and $I\neq 0$, we have $1\le r\le n$.
\end{proof}

\subsection{The $\pd (I)=1$ line}\label{sec:pdim1}
We now turn our attention to the line $\pd (I)=1$.  In the  family we construct, the polarized neural ideal is generated by two
squarefree monomials, and hence has a resolution with a single first syzygy.  In this situation
$\reg$ is determined by the degree of the least common multiple of the two generators, which admits
a direct interpretation in terms of the Hamming distance between the corresponding missing words.
Using the free-neuron extension from \Cref{lem:free-neuron}, we realize every pair
$(1,r)$ with $r\in [1, 2n-1]$.

\begin{proposition}\label{prop:pdim1-any-reg}
Fix $n\ge 3$ and let $r$ be an integer with $1\le r\le 2n-1$.
There exists a neural code $\C_r\subseteq\F_2^n$ such that, with
$I_r=\P(J_{\C_r})$, we have
\[
(\pd(I_r),\reg(I_r))=(1,r).
\]
Moreover, if $r\ge 3$, the code $\C_r$ may be chosen to be nondegenerate.
\end{proposition}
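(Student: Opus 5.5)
The plan is to build every example from very small codes and then enlarge them. For large $r$ I use a code obtained by deleting two far-apart words. For small $r$ I use the principal-ideal codes of \Cref{prop:pdim0_classification} together with a constant neuron. In every case the number of neurons is then raised to $n$ by the free-neuron extension, which by \Cref{lem:free-neuron} preserves both $\pd$ and $\reg$.

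\textbf{Case $r\ge 3$.} Fix $m\le n$ and two words $v,w\in\F_2^m$ at Hamming distance $d\ge 2$. Set $\C=\F_2^m\setminus\{v,w\}$.

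First I identify $CF(J_\C)$. A pseudo-monomial $x_\sigma\prod_{j\in\tau}(1-x_j)$ lies in $J_\C$ exactly when it vanishes on $\C$. This happens exactly when the subcube $Q(\sigma,\tau)$ consists only of noncodewords (the same evaluation argument as in the proof of \Cref{prop:pdim0_classification}). Since $d(v,w)\ge 2$, no subcube of dimension $\ge1$ fits inside $\{v,w\}$. Hence the only pseudo-monomials in $J_\C$ are the multiples of $\rho_v$ and $\rho_w$, so $CF(J_\C)=\{\rho_v,\rho_w\}$ and $I=\langle m_v,m_w\rangle$.

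Next I compute the invariants from the Taylor resolution of two monomials. We have $\deg m_v=\deg m_w=m$, and $\deg\lcm(m_v,m_w)=m+d$, since $m_v$ and $m_w$ disagree exactly in the $d$ coordinates where $v$ and $w$ differ. So the resolution is $0\to S(-(m+d))\to S(-m)^2\to I\to0$. It is minimal because $d\ge 1$ makes the lcm strictly larger than each generator. This gives $\pd(I)=1$ and $\reg(I)=m+d-1$.

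Letting $2\le d\le m\le n$ realizes every $r=m+d-1\in[3,2n-1]$; for example take $m=\max(2,r+1-n)$ or any valid split. Nondegeneracy also holds. Since $m\ge2$, each coordinate takes both values among the at least two remaining words: the words agreeing with $v$ except in coordinate $i$, or their variants, cannot all be deleted. After the free-neuron extension the new coordinates are free, so they are also nondegenerate.

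\textbf{Case $r=1$.} Start from $\{0\}\subseteq\F_2$, which gives $I=\langle x_1\rangle$, $\pd=0$, $\reg=1$ by \Cref{rem:n=1}. Adjoin a constant neuron using \Cref{lem:constant-neuron}, which raises $\pd$ to $1$ and keeps $\reg=1$. Then apply free-neuron extensions up to $n$ neurons.

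\textbf{Case $r=2$.} Start from $\F_2^2\setminus\{11\}=\F_2^2\setminus Q(\{1,2\},\emptyset)$, which has $(\pd,\reg)=(0,2)$ by \Cref{prop:pdim0_classification}. Adjoin one constant neuron to reach $(1,2)$ on $3$ neurons, then free-extend to $n$; this uses the hypothesis $n\ge3$. These two codes are degenerate, which the statement allows.

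\textbf{Main obstacle.} The main point to verify carefully is the canonical form computation in the case $r\ge3$. One must check that no pseudo-monomial of degree less than $m$ lies in $J_\C$, and that the characteristic pseudo-monomials $\rho_v,\rho_w$ are themselves minimal; the subcube criterion handles both. The remaining steps are direct applications of the earlier lemmas.
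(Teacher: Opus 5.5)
Your proposal is correct and is essentially the paper's proof. For $r\ge 3$ the paper likewise deletes two words at Hamming distance $d\ge 2$ from $\F_2^m$, reads $\reg=m+d-1$ off the two-generator Taylor resolution, and free-extends; for $r=1,2$ your constant-neuron constructions give, up to relabeling coordinates, the paper's ideals $\langle x_1,y_2\rangle$ and $\langle x_1,x_2x_3\rangle$. Your subcube criterion for $CF(J_\C)$ is a clean substitute for the paper's Hamming-neighbor argument.

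One slip: your sample choice $m=\max(2,r+1-n)$ swaps the roles of $m$ and $d$. For example, $n=3$, $r=4$ gives $m=2$, $d=3>m$. Use instead $d=\max(2,r+1-n)$ and $m=r+1-d$, or the paper's $m=\lceil (r+1)/2\rceil$, $d=r-m+1$.
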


\begin{proof}
For $r=1$, 
% nondegeneracy is impossible by \Cref{thm:reg1-classification}.  For $n\ge 3$, 
set
\[
\C_1=\{c\in\F_2^n:\ c_1=0,\ c_2=1\}.
\]
Then $\P(J_{\C_1})=\langle x_1,\ y_2\rangle$ and  $(\pd,\reg)=(1,1)$.

For $r=2$, set
\[
\C_2=\{c\in\F_2^n:\ c_1=0 \text{ and not }(c_2=c_3=1)\}.
\]
Then $\P(J_{\C_2})=\langle x_1,\ x_2x_3\rangle$. Hence $(\pd,\reg)=(1,2)$.  (One can also realize $(1,2)$ with a nondegenerate simplicial code; see
\Cref{sec:reg2-line} for related constructions.)

Now assume $r\ge 3$. Choose integers
\[
m:=\Big\lceil\frac{r+1}{2}\Big\rceil \le n,
\qquad
d:=r-m+1,
\]
so that $2\le d\le m$ and $m+d-1=r$. Pick distinct $v,w\in\F_2^m$ with $d_H(v,w)=d$ and define
\[
\C^{(m)}:=\F_2^m\setminus\{v,w\}.
\]
Since the noncodewords of $\C^{(m)}$ are exactly $v$ and $w$, we have
$J_{\C^{(m)}}=\langle \rho_v,\rho_w\rangle$.
We claim that $\rho_v$ and $\rho_w$ are precisely the elements of $CF(J_{\C^{(m)}})$.
Indeed, $d_H(v,w)\ge 2$ implies that for each of $v$ and $w$ there is a word of Hamming distance $1$
from it that is still a codeword (e.g.\ flip a single coordinate of $v$; the result is neither $v$ nor
$w$ since $w$ differs from $v$ in at least two coordinates).  Consequently, if $f$ is a proper
pseudo-monomial divisor of $\rho_v$ (so $\deg(f)<m$), then $f$ does not vanish on that neighboring
codeword, and hence $f\notin J_{\C^{(m)}}$.  Thus $\rho_v$ is minimal under divisibility among
pseudo-monomials in $J_{\C^{(m)}}$, and similarly for $\rho_w$.  Therefore $CF(J_{\C^{(m)}})=\{\rho_v,\rho_w\}$. It follows that
\[
I_{(m)}:=\P(J_{\C^{(m)}})=\langle m_v,m_w\rangle,
\]
where $m_v=x_{\sigma(v)}y_{[m]\setminus\sigma(v)}$ and
$m_w=x_{\sigma(w)}y_{[m]\setminus\sigma(w)}$ are transversal monomials of degree $m$.
Since $m_v$ and $m_w$ have the same degree and are distinct, neither divides the other, so $I_{(m)}$
is minimally generated by these two monomials.  Hence $\pd(I_{(m)})=1$, and the minimal resolution of a
two-generated monomial ideal gives
\[
\reg(I_{(m)})=\deg\lcm(m_v,m_w)-1.
\]
Moreover, $\deg\lcm(m_v,m_w)=m+d_H(v,w)=m+d$: for each coordinate where $v$ and $w$ agree,
$\lcm(m_v,m_w)$ contains exactly one of $\{x_i,y_i\}$, while for each coordinate where they differ it
contains both.  Therefore $\reg(I_{(m)})=(m+d)-1=r$.

Finally, define $\C_r:=\C^{(m)}\times \F_2^{ n-m}\subseteq\F_2^n$ and set $I_r=\P(J_{\C_r})$.
By repeated application of \Cref{lem:free-neuron}, adjoining free coordinates preserves projective dimension
and regularity. So $\pd(I_r)=1$ and $\reg(I_r)=r$.

For nondegeneracy, observe that when $m\ge 2$ deleting two words from $\F_2^m$ cannot fix any coordinate:
each coordinate takes both values on $\F_2^m$, and removing only two words leaves at least one codeword with
$0$ and at least one with $1$ in every coordinate. Hence $\C^{(m)}$ is nondegenerate, and so is its free
extension $\C_r$.
\end{proof}

\begin{remark}
The hypothesis $n\ge 3$ in \Cref{prop:pdim1-any-reg} is necessary only because the stated range
$1\le r\le 2n-1$ includes $r=2$ when $n=2$.  In our construction for $n\ge 3$ we delete two words
$v,w\in\F_2^m$ with $d_H(v,w)\ge 2$, which forces $\reg(\P(J_{\F_2^m\setminus\{v,w\}}))=m+d_H(v,w)-1$.
For $n=2$ the condition $m\le n$ forces $m=2$, and then $d_H(v,w)\ge 2$ implies $d_H(v,w)=2$. So the
construction yields $\reg(I)=3$ (deleting an antipodal pair) but cannot produce $\reg (I)=2$ where $I=\P(J_{\F_2^m\setminus\{v,w\}})$.  Attempting to
obtain $r=2$ would require deleting Hamming neighbors ($d_H(v,w)=1$), which forces a fixed coordinate and
hence a linear generator in $I$. So $\reg(I)=1$ by \Cref{thm:reg1-classification}.
Thus $(\pd(I),\reg(I))=(1,2)$ does not occur for $n=2$.
\end{remark}

\section{Small regularity: $\reg  (I) \le 3$}\label{sec:small-reg}

In this section we study polarized neural ideals in the low-regularity regime $\reg(I)\le 3$.
We treat the cases $\reg(I)=1,2,3$ in turn.  For $\reg(I)=2$ we use Hochster’s formula together with a
connectedness criterion for induced subcomplexes of the polar complex to obtain the sharp bound
$\pd(I)\le n-2$ for every nonzero proper polarized neural ideal, and we construct families realizing each value
in this range.  Along the line $\reg(I)=3$ we exhibit a family realizing every projective dimension in
$[0,2n-3]$.

\subsection{Classification of neural codes with regularity $1$}\label{sec:reg1}

Assume $\reg(I)=1$.  Then $I$ has a $1$--linear resolution, hence is generated by variables.  For polar
neural ideals, these generators record fixed coordinates: $x_i\in I$ (respectively $y_i\in I$) means that
the $i$th coordinate is identically $0$ (respectively $1$) on $\C$.  This forces $\C$ to be a coordinate
subcube of $\F_2^n$ obtained by fixing some coordinates to $0$ and some disjoint coordinates to $1$.
The next theorem makes this correspondence precise and computes $\pd(I)$ from the number of fixed
coordinates.

\begin{theorem}\label{thm:reg1-classification}
Let $\C\subset \F_2^n$ be a nonempty neural code.
Then the following are equivalent:
\begin{enumerate}
\item $\reg(I)=1$;
\item $I$ is generated by variables and, after relabeling,
\[
I=\langle x_i : i\in\sigma\rangle+\langle y_j : j\in\tau\rangle
\qquad\text{for some disjoint }\sigma,\tau\subseteq[n];
\]
\item $\C$ is a subcube of $\F_2^n$ obtained by fixing some coordinates to $0$ and some disjoint coordinates to $1$, i.e.
\[
\C=\{ v\in \F_2^n : v_i=0\ \forall i\in\sigma,\ \ v_j=1\ \forall j\in\tau \}
\]
for some disjoint $\sigma,\tau\subseteq[n]$.
\end{enumerate}
Moreover, if these conditions hold and $t:=|\sigma|+|\tau|$, then
\[
\pd(I)=t-1\le n-1,
\]
and $\pd(I)=n-1$ if and only if $t=n$, equivalently if and only if $\C$ consists of a single codeword.
\end{theorem}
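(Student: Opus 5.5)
The plan is to prove $(1)\Rightarrow(2)\Rightarrow(3)\Rightarrow(2)\Rightarrow(1)$ and then compute $\pd(I)$ from the Koszul complex.

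\textbf{Step $(1)\Rightarrow(2)$.} Assume $\reg(I)=1$. Since $\reg(I)\ge$ the maximal degree of a minimal generator, every minimal generator of $I$ is linear. Hence $I$ is generated by variables, and by transversality it has the form $\langle x_i: i\in\sigma\rangle+\langle y_j:j\in\tau\rangle$. The sets $\sigma,\tau$ are disjoint: if $x_i,y_i\in I$, then $\C=\emptyset$ by \Cref{rem:basic_degree_bounds}, contradicting nonemptiness. Also $I\ne 0$ by the standing assumption, so $\sigma\cup\tau\neq\emptyset$.

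\textbf{Step $(2)\Rightarrow(3)$.} Depolarize: $J_\C=\langle x_i:i\in\sigma\rangle+\langle 1-x_j:j\in\tau\rangle$, since $J_\C$ is generated by $CF(J_\C)$, whose polarizations are exactly these variables. Taking zero sets via \Cref{rem:boolean-zeroset}, $\C=V(J_\C)$ is exactly the set of $v$ with $v_i=0$ for $i\in\sigma$ and $v_j=1$ for $j\in\tau$.

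\textbf{Step $(3)\Rightarrow(2)$.} Conversely, for such a subcube each $x_i$ ($i\in\sigma$) and each $1-x_j$ ($j\in\tau$) lies in $J_\C$. One way to see this is to sum $\rho_w$ over the noncodewords with $w_i=1$, as in the proof of \Cref{thm:all-or-nothing}. Alternatively, iterate \Cref{lem:constant-neuron} starting from $\F_2^{n-t}$ (suitably permuted), which gives $I=\langle x_i\rangle_{i\in\sigma}+\langle y_j\rangle_{j\in\tau}$ directly. The iteration route is cleaner, but it needs $I(\F_2^{n-t})=0$, which is \Cref{rem:degenerate-codes}(a), and a relabeling of coordinates.

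\textbf{Step $(2)\Rightarrow(1)$ and the formula for $\pd$.} An ideal generated by $t\ge1$ distinct variables is resolved by the Koszul complex, which is linear. So $\reg(I)=1$ and $\pd(I)=t-1$; this also follows by applying \Cref{lem:add_variable_pd_reg} inductively, starting from a single variable. Since $\sigma,\tau$ are disjoint, $t\le n$. Moreover $t=n$ exactly when every coordinate is fixed, i.e. when $\C$ is a single codeword. The main subtlety is keeping track of the degenerate case $t=0$, which is excluded by $I\neq0$, and of disjointness; otherwise the argument is routine.
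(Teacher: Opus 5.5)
Your proposal is correct and follows essentially the same route as the paper: the same cycle of implications, \Cref{rem:basic_degree_bounds}(b) for disjointness, summing $\rho_w$ for (3)$\Rightarrow$(2), and the Taylor (equivalently Koszul) resolution for $\pd(I)=t-1$. The one variation is in (2)$\Rightarrow$(3): you read off $J_\C=\langle x_i,\,1-x_j\rangle$ directly from $J_\C=\langle CF(J_\C)\rangle$ and then take zero sets. This works because each $\P(f)$ is divisible by a variable generator, so $f$ is divisible by the corresponding linear pseudo-monomial. It is a cleaner form of the paper's contradiction argument, which uses a minimal divisor of $\rho_u$ for some $u\in Q\setminus\C$.
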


\begin{proof}
\emph{$(1)\Rightarrow(2)$:}
If $\reg(I)=1$, then every minimal monomial generator of $I$ has degree $1$. Hence $I$ is generated by variables.
Since $I$ is proper, it cannot contain both $x_i$ and $y_i$ for any $i$ by \Cref{rem:basic_degree_bounds}(2).
Therefore
\[
I=\langle x_i:i\in\sigma\rangle+\langle y_j:j\in\tau\rangle
\]
for some disjoint $\sigma,\tau\subseteq[n]$.

\emph{$(2)\Rightarrow(1)$:}
Conversely, any ideal generated by variables has a $1$--linear resolution and $\reg(I)=1$.

\emph{$(2)\Rightarrow(3)$:}
Assume $I=\langle x_i:i\in\sigma\rangle+\langle y_j:j\in\tau\rangle$ with $\sigma\cap\tau=\varnothing$, and set
\[
Q:=\{ v\in\F_2^n : v_i=0\ \forall i\in\sigma,\ v_j=1\ \forall j\in\tau \}.
\]
Since $x_i\in I$, then $x_i\in J_\C$. So $x_i(c)=c_i=0$ for all $c\in\C$.
Similarly, $y_j\in I$ implies that $(1-x_j)\in J_\C$. So $(1-x_j)(c)=1-c_j=0$ for all $c\in\C$.
Thus $\C\subseteq Q$.

To prove equality, suppose $\C\subsetneq Q$ and choose $u\in Q\setminus \C$.
Then $u$ is a noncodeword, so $\rho_u\in J_\C$.
Choose a minimal pseudo-monomial $f\in J_\C$ dividing $\rho_u$. By definition
$f\in CF(J_\C)$.

We claim that $\deg(f)\ge 2$.
Indeed, since $u\in Q$, for each $i\in\sigma$ one has $u_i=0$, so $\rho_u$ is divisible by $(1-x_i)$ but
not by $x_i$, and for each $j\in\tau$ one has $u_j=1$, so $\rho_u$ is divisible by $x_j$ but not by $(1-x_j)$.
Thus none of the linear pseudo-monomials $x_i$ ($i\in\sigma$) or $(1-x_j)$ ($j\in\tau$) divides $\rho_u$.
If $k\notin\sigma\cup\tau$, then neither $x_k$ nor $(1-x_k)$ lies in $J_\C$, since otherwise the $k$th
coordinate would be fixed on $Q$, contradicting the description of $Q$.
Hence no degree-$1$ pseudo-monomial in $J_\C$ divides $\rho_u$. Therefore $\deg(f)\ge 2$.

Notice that $\P(f)\in I$.
On the other hand, none of the variable generators of $I$ in~(2) divides $\P(f)$: the same divisibility
check as above shows that neither $x_i$ ($i\in\sigma$) nor $y_j$ ($j\in\tau$) divides $\P(f)$.
This contradicts the assumption that $I$ is generated by $\{x_i:i\in\sigma\}\cup\{y_j:j\in\tau\}$.
Therefore $\C=Q$.

\emph{$(3)\Rightarrow(2)$:}
Assume $\C$ as in~(3), with $\sigma$ and $\tau$ the sets of coordinates fixed to $0$ and $1$.
Every noncodeword violates at least one fixed coordinate, so each characteristic pseudo-monomial $\rho_v$ is divisible by
some $x_i$ ($i\in\sigma$) or some $(1-x_j)$ ($j\in\tau$). Hence
\[
J_\C\subseteq \langle x_i:i\in\sigma,\ (1-x_j):j\in\tau\rangle.
\]
Conversely, for $i\in\sigma$ we have
\[
\sum_{v:  v_i=1}\rho_v
 = 
x_i\prod_{k\neq i}\bigl(x_k+(1-x_k)\bigr)
 = x_i\in J_\C,
\]
and for $j\in\tau$,
\[
\sum_{v:  v_j=0}\rho_v
 = 
(1-x_j)\prod_{k\neq j}\bigl(x_k+(1-x_k)\bigr)
 = 1-x_j\in J_\C.
\]
Thus
\[
J_\C=\langle x_i:i\in\sigma \rangle +\langle (1-x_j):j\in\tau\rangle,
\]
and polarizing gives
\[
I=\langle x_i:i\in\sigma\rangle+\langle y_j:j\in\tau\rangle.
\]

Finally, if $I$ is generated by $t=|\sigma|+|\tau|$ variables, then the Taylor on these variables
yields a  minimal free resolution of $I$ of
length $t-1$. Therefore $\pd(I)=t-1\le n-1$. Moreover, $t=n$ if and only if every coordinate is fixed,
equivalently if and only if $\C$ consists of a single codeword.
\end{proof}

\begin{remark}\label{rem:subcube-vs-cosubcube}
Coordinate subcubes control two boundary behaviors of the $\pd(I) $ and $\reg(I)$, depending on
whether the code is itself a subcube or the complement of one.  By \Cref{thm:reg1-classification},
$\reg(I)=1$ if and only if $\C$ is obtained by fixing some coordinates to $0$ and some disjoint
coordinates to $1$; equivalently, in the notation of \Cref{prop:pdim0_classification},
$\C=Q(\tau,\sigma)$ for some disjoint $\sigma,\tau\subseteq[n]$, and then $I$ is generated by variables.
By \Cref{prop:pdim0_classification}, $\pd(I)=0$ if and only if $\C=\F_2^n\setminus Q(\sigma,\tau)$ for
some nonempty disjoint $\sigma,\tau$, and then $I$ is principal.
In this sense, subcubes (and their complements) play for the $(\reg,\pd)$ boundary the same organizing role
that antipodal pairs play for the extremal corners described in
\Cref{thm:maxreg_classification,cor:antipodal_pairs_maxpdim}.

\end{remark}

\subsection{The $\reg  (I)=2$ line}\label{sec:reg2-line}

We now consider the case $\reg(I)=2$.  By Hochster’s formula, the Betti table is supported on the linear
and quadratic strands, so contributions arise only from induced subcomplexes of the polar complex that
are empty or disconnected.  The connectedness threshold in \Cref{lem:DW-connected-largeW} is the key input
that controls when such disconnectedness can occur.

\begin{proposition}\label{prop:reg2-pd-bound}
Let $\C\subseteq\F_2^n$ be a nonempty neural code. If $\reg(I)=2$, then
\[
\pd(I)\ \le\ n-2.
\]
\end{proposition}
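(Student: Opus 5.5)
The plan is to split off the variables in $I$, reduce to a nondegenerate code, and there read $\pd$ off the quadratic strand of the Betti table using \Cref{lem:DW-connected-largeW}.

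\textbf{Step 1: reduction to a nondegenerate code.} Let $\sigma$ (resp.\ $\tau$) be the set of coordinates fixed to $0$ (resp.\ $1$) on $\C$, and set $t:=|\sigma|+|\tau|$. Then $x_i\in I$ for $i\in\sigma$ and $y_j\in I$ for $j\in\tau$. After permuting coordinates, $\C$ is obtained from a code $\C'\subseteq\F_2^{m}$, with $m=n-t$, by $t$ successive constant-neuron extensions, and $\C'$ is nondegenerate (or $m=0$). Write $I'=\P(J_{\C'})$. Iterating \Cref{lem:constant-neuron} gives
\[
\pd(I)=\pd(I')+t \qquad\text{and}\qquad \reg(I)=\reg(I')
\]
whenever $I'\neq 0$. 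If instead $I'=0$, i.e.\ $\C'=\F_2^{m}$, then $I$ is generated by $t$ variables and $\reg(I)=1$. So $\reg(I)=2$ forces $I'\neq 0$, $\reg(I')=2$ and $m\ge 1$. Since $\C'$ is nondegenerate, $I'$ contains no variables, so it is generated in degree $\ge 2$. The case $m=1$ is impossible: a nondegenerate code on one neuron is $\F_2$, giving $I'=0$. Hence $m\ge 2$, and it suffices to prove $\pd(I')\le m-2$, because then $\pd(I)\le m-2+t=n-2$.

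\textbf{Step 2: the nondegenerate case via Hochster.} Let $\Delta'=\Delta_{\C'}$. By nondegeneracy (\Cref{def:nondegenerate}), all $2m$ variables are vertices of $\Delta'$, so there are no ghost vertices and $|V(\Delta')|=2m\ge m+1$. Since $\reg(I')=2$, every nonzero Betti number $\beta_{i,j}(I')$ has $j-i\in\{1,2\}$.

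The linear strand is empty. Indeed, $\beta_{i,i+1}(I')$ is a sum of groups $\widetilde H_{-1}(\Delta'_W)$ with $|W|=i+1\ge 1$, and these vanish because every such $\Delta'_W$ is nonempty. So all nonzero Betti numbers lie on the quadratic strand. There Hochster's formula (\Cref{thm:hochster}) requires a subset $W$ with $|W|=i+2$ and $\widetilde H_0(\Delta'_W)\neq 0$, i.e.\ with $\Delta'_W$ disconnected. By \Cref{lem:DW-connected-largeW} (applied to $\C'$ on $m$ neurons), this forces $|W|\le m$, hence $i\le m-2$. Therefore $\pd(I')\le m-2$.

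\textbf{Main obstacle.} The delicate point is Step 1. Applied directly to $\C$, Hochster's formula allows subsets $W$ containing ghost variables (those lying in $I$), and \Cref{lem:DW-connected-largeW} only bounds $|W\cap V(\Delta)|$, not $|W|$. The constant-neuron reduction avoids this. One must check that $\C'$ really is nondegenerate and that $\C$ is recovered from $\C'$ exactly by constant-neuron extensions, so that \Cref{lem:constant-neuron} applies at each of the $t$ stages; this holds because the fixed coordinates of $\C$ are precisely those in $\sigma\cup\tau$.
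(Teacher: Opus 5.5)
Your proof is correct and follows the paper's argument. Both split off the linear generators: you do this through iterated constant-neuron extensions (\Cref{lem:constant-neuron}), while the paper writes $I=L+I'$ and applies \Cref{lem:add_variable_pd_reg} directly. Both then bound the quadratic strand of the nondegenerate part using Hochster's formula and \Cref{lem:DW-connected-largeW}; your route also makes explicit the identification of $I'$ with the polarized ideal of the restricted nondegenerate code, which the paper only asserts in passing.
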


\begin{proof}
Set $F:=\{t\in[n]: x_t\in I \text{ or } y_t\in I\}$ and $f:=|F|$.  For a polarized neural ideal we cannot have both $x_t$ and $y_t$ in $I$ by \Cref{rem:basic_degree_bounds}(2). So $f\le n$.

Let $S':=\Bbbk[\{x_i,y_i : i\in[n]\setminus F\}]$
and decompose
\[
I=L+I',\qquad 
L:=\langle x_t : x_t\in I\rangle+\langle y_t : y_t\in I\rangle,\qquad I'\subseteq S',
\]
where $I'$ is generated by the remaining minimal generators of $I$.  Since $\reg(I)=2$, we have $I'\neq 0$.

It follows from \Cref{lem:add_variable_pd_reg} that
\[
\reg_{S'}(I')=2
\qquad\text{and}\qquad
\pd_S(I)=\pd_{S'}(I')+f.
\]
Because $I'$ has no linear generators and $\reg_{S'}(I')=2$, all minimal generators of $I'$ have degree $2$
and $I'$ has a $2$--linear resolution; equivalently, the only possible nonzero Betti numbers of $I'$ lie on
the quadratic strand $j-i=2$.

Let $\Delta'$ denote the polar complex associated to $I'$ (equivalently, the polar complex of the restricted code obtained from $\C$ by deleting the coordinates in $F$; indeed, each coordinate in $F$ is constant on $\C$.).  This restricted code is nondegenerate, since $I'$
contains no variables.  By Hochster's formula, if $\beta_{a,a+2}(I')\neq 0$, then there exists
$W\subseteq V(\Delta')$ with $|W|=a+2$ such that
\[
\widetilde H_0(\Delta'_W;\Bbbk)\neq 0,
\]
i.e.\ $\Delta'_W$ is disconnected.  Applying \Cref{lem:DW-connected-largeW} to the nondegenerate code on
$n-f$ neurons shows that $\Delta'_W$ is connected whenever $|W|\ge (n-f)+1$.  Hence any $W$ witnessing
disconnection must satisfy $|W|\le n-f$. So $a\le (n-f)-2$.  It follows that
\[
\pd_S(I)=\pd_{S'}(I')+f \le ((n-f)-2)+f = n-2,
\]
which completes the proof.
\end{proof}

\begin{remark}
 \Cref{prop:reg2-pd-bound} explains what we observe along the $\reg (I)=2$ lines in \Cref{fig:plots_3_4}. In particular, this explains why  $(\pd  (I),\reg  (I))=(2,2)$ for $n=3$ and $(\pd (I),\reg (I))=(3,2)$  for  $n=4$ cannot occur.
\end{remark}

\Cref{prop:reg2-pd-bound} describes the allowable range of projective dimensions along the line $\reg=2$. We next give an explicit family of codes realizing every value $0\le \pd (I)\le n-2$ on this line.

\begin{proposition}\label{prop:reg2-line-realize-pd}
Fix $n\ge 2$. For every integer $p$ with $p \in [0, n-2]$, there exists a neural code
$\C_{n,p}\subseteq \F_2^n$ such that for $I_{n,p}:=\P(J_{\C_{n,p}})\subseteq
S$ we have
\[
(\pd(I_{n,p}),\reg(I_{n,p}))=(p,2).
\]
\end{proposition}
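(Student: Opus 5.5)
The plan is to reduce to a base case and then lift with the constant-neuron extension. By \Cref{lem:constant-neuron}, adjoining a constant neuron raises $\pd$ by one and leaves $\reg$ unchanged. By \Cref{lem:free-neuron}, adjoining a free neuron changes neither invariant. So it suffices to produce, for every $m\ge 2$, one code on $m$ neurons with $(\pd,\reg)=(0,2)$. With that in hand, the cases $p\ge 0$, $p\le n-2$ are handled as follows: start from a code on $m=n-p\ge 2$ neurons with $(\pd,\reg)=(0,2)$, then adjoin $p$ constant neurons. This yields a code on $n$ neurons with $(\pd,\reg)=(p,2)$.

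For the base case I would use \Cref{prop:pdim0_classification} with $r=2$. Take $\sigma=\{1,2\}$, $\tau=\emptyset$ and $\C=\F_2^m\setminus Q(\{1,2\},\emptyset)$, i.e.\ all words that do not have $c_1=c_2=1$. That theorem gives $I=\langle x_1x_2\rangle$, hence $\pd(I)=0$ and $\reg(I)=2$. This is valid whenever $m\ge 2$.

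Concretely, I would define
\[
\C_{n,p}:=\{c\in\F_2^n:\ \text{not }(c_1=c_2=1),\ c_{n-p+1}=\cdots=c_n=0\}.
\]
Iterating the proof of \Cref{lem:constant-neuron} gives
\[
I_{n,p}=\langle x_1x_2\rangle+\langle x_{n-p+1},\dots,x_n\rangle .
\]
As a check, this ideal is generated by a quadric and $p$ variables in disjoint variable sets. It is therefore a complete intersection of $p+1$ forms, and its Koszul resolution gives $\pd=p$ and $\reg=2+p\cdot 1-p=2$. This agrees with \Cref{lem:add_variable_pd_reg}.

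There is no real obstacle. The only point needing care is the bookkeeping that the constant-neuron extension is applied $p$ times to a code on $n-p\ge 2$ neurons, so that the base case is available. This is exactly where the hypothesis $p\le n-2$ enters, and it matches the bound of \Cref{prop:reg2-pd-bound}.
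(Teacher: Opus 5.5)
Your proof is correct. The code you build is the paper's $\C_{n,p}$ up to a permutation of coordinates: the paper fixes $v_1=\cdots=v_p=0$ and forbids $v_{p+1}=v_{p+2}=1$. After relabeling you get the same ideal $\langle x_1,\dots,x_p,\,x_{p+1}x_{p+2}\rangle$.

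The route is different, though. The paper works from scratch. It shows $x_1,\dots,x_p$ and $x_{p+1}x_{p+2}$ lie in $J_{\C_{n,p}}$ by summing characteristic pseudo-monomials, checks that they are minimal and that they generate, and so identifies the canonical form. It then reads off $\pd=p$ and $\reg=2$ from the Taylor resolution, which is minimal because distinct subsets of generators have distinct lcms.

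You instead reuse \Cref{prop:pdim0_classification} for the base ideal $\langle x_1x_2\rangle$ and \Cref{lem:constant-neuron} for the lift. This is the same modular idea the paper itself uses on the $\reg=3$ line (\Cref{prop:reg3_tunable_invariants}).

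The paper's version is self-contained and exhibits the minimal resolution explicitly. Yours is shorter, and it generalizes with no extra work. Start from $\F_2^{\,n-p}\setminus Q(\sigma,\tau)$ with $|\sigma|+|\tau|=r\le n-p$ in place of $r=2$, and the same two citations give $(\pd,\reg)=(p,r)$ for every pair with $p+r\le n$, $r\ge 1$. The caveat is that these codes are degenerate when $p\ge 1$. That is why \Cref{thm:pr-band-realization}, which asks for nondegenerate codes, needs the simplicial-sphere construction instead.
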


\begin{proof}
Fix $p\in\{0,1,\dots,n-2\}$ and define
\[
\C_{n,p}:=\{ v\in\F_2^n:\ v_1=\cdots=v_p=0\ \text{ and not }(v_{p+1}=v_{p+2}=1) \}.
\]

Recall that $J_{\C_{n,p}}$ is generated by the characteristic pseudo-monomials of $v\in \F_2^n\setminus \C_{n,p}$
\[
\rho_v  =  \prod_{i:v_i=1} x_i\prod_{j:v_j=0}(1-x_j),
\]
and $CF(J_{\C_{n,p}})$ consists of the pseudo-monomials in $J_{\C_{n,p}}$ that are minimal under divisibility.

We first show that $x_i\in J_{\C_{n,p}}$ for $i=1,\dots,p$. Indeed, every codeword in $\C_{n,p}$ satisfies $v_i=0$. So every word with $i$th coordinate $1$ is a noncodeword. Summing their characteristic pseudo-monomials gives
\[
\sum_{v:  v_i=1}\rho_v
 = 
x_i\prod_{k\neq i}\bigl(x_k+(1-x_k)\bigr)
 = x_i\in J_{\C_{n,p}}.
\]
Moreover, each $x_i$ is minimal.

Next we show $x_{p+1}x_{p+2}\in J_{\C_{n,p}}$. Since $\C_{n,p}$ contains no word with $v_{p+1}=v_{p+2}=1$,
every word with $v_{p+1}=v_{p+2}=1$ is a noncodeword, and summing their characteristic pseudo-monomials yields
\[
\sum_{v:  v_{p+1}=v_{p+2}=1}\rho_v
=
x_{p+1}x_{p+2}\prod_{k\notin\{p+1,p+2\}}\bigl(x_k+(1-x_k)\bigr)
=
x_{p+1}x_{p+2}\in J_{\C_{n,p}}.
\]
This pseudo-monomial is minimal because $\C_{n,p}$ contains words with $(v_{p+1},v_{p+2})=(1,0)$ and with $(0,1)$,
so neither $x_{p+1}$ nor $x_{p+2}$ lies in $J_{\C_{n,p}}$.

Now let $v\notin \C_{n,p}$. Then either $v_i=1$ for some $i\le p$, in which case $\rho_v$ is divisible by $x_i$,
or else $v_1=\cdots=v_p=0$ and $v_{p+1}=v_{p+2}=1$, in which case $\rho_v$ is divisible by $x_{p+1}x_{p+2}$.
Thus every $\rho_v$ is divisible by one of $x_1,\dots,x_p,x_{p+1}x_{p+2}$. So these pseudo-monomials generate $J_{\C_{n,p}}$, and by the
minimality statements above they are exactly the canonical form. Thus
\[
I_{n,p}=\P(J_{\C_{n,p}})=\bigl\langle x_1,\dots,x_p,\ x_{p+1}x_{p+2}\bigr\rangle.
\]

Let $G(I_{n,p})=\{x_1,\dots,x_p,u\}$ where $u:=x_{p+1}x_{p+2}$. For any subset $A\subseteq G(I_{n,p})$,
\[
\lcm(A)=
\begin{cases}
\prod_{x_i\in A} x_i , & u\notin A,\\[4pt]
x_{p+1}x_{p+2}\prod_{x_i\in A} x_i, & u\in A.
\end{cases}
\]
In particular, distinct subsets yield distinct least common multiples, so the Taylor resolution of $I_{n,p}$
is minimal and has length $p$. Therefore
$\pd(I_{n,p})=p$.

To compute regularity, for a subset $A$ of size $s$ the corresponding Taylor basis element lies in homological
degree $s-1$. If $u\notin A$, then $\deg(\lcm(A))=s$; if $u\in A$, then
$\deg(\lcm(A))=s+1$. Hence $\reg(I_{n,p})=2$.
\end{proof}

\subsection{The $\reg  (I)=3$ line}\label{sec:reg3}

In this subsection we work along the line $\reg(I)=3$ by building a flexible family of examples.
We start with the all-or-nothing code on $m$ coordinates; by \Cref{thm:all-or-nothing} its polar ideal
satisfies $\reg(I)=3$ and $\pd(I)=2m-3$.  We then fix $t$ additional coordinates, which adjoins $t$ linear
generators in variables disjoint from the all-or-nothing part.  This operation preserves regularity while
adjusting projective dimension in a controlled way.

\begin{construction}\label{const:reg3_tunable}
Fix $n\ge 3$. Choose a partition
\[
[n]=A\sqcup B\sqcup F
\qquad\text{with}\qquad
|A|=m,\ |B|=t,\ |F|=n-m-t,
\]
where $2\le m\le n$ and $0\le t\le n-m$.
Further split $B=B_0\sqcup B_1$ (coordinates fixed to $0$ and to $1$).

Define a code $C\subseteq\F_2^n$ by requiring:
\begin{itemize}
\item on $A$, only the two patterns $0^A$ and $1^A$ occur;
\item on $B_0$ we impose $v_i=0$ and on $B_1$ we impose $v_j=1$;
\item on $F$ all bits are free.
\end{itemize}
Equivalently,
\[
\C  = 
\Big(\{0^A,1^A\}\times\{0\}^{B_0}\times\{1\}^{B_1}\times \F_2^{F}\Big)
\subseteq \F_2^n.
\]
\end{construction}

\begin{remark}
 When $t>0$ this construction has fixed coordinates (hence the code is degenerate in our
sense), but this is harmless for the homological invariants we compute below.   
\end{remark}

\begin{proposition}\label{prop:reg3_tunable_invariants}
Let $\C$ be as in Construction~\ref{const:reg3_tunable}.
Then
\[
I  = 
\big\langle x_i y_j : i,j\in A,\ i\neq j \big\rangle
 + 
\langle x_i : i\in B_0\rangle
 + 
\langle y_j : j\in B_1\rangle,
\]
and
\[
(\pd(I),\reg(I))= ((2m-3)+t,3).
\]
\end{proposition}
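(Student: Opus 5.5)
The plan is to build $\C$ in stages from the all-or-nothing code, using the neuron-extension lemmas of \Cref{sec:toolkit}. This gives both the generators of $I$ and the invariants at once.

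Start with $\C_0=\{0^m,1^m\}\subseteq\F_2^m$ on the coordinates in $A$. By \Cref{thm:all-or-nothing} (its proof gives $CF(J_{\C_0})=\{x_i(1-x_j):i\neq j\}$),
\[
\P(J_{\C_0})=\langle x_iy_j: i,j\in A,\ i\neq j\rangle,
\]
and its invariants are $(\pd,\reg)=(2m-3,3)$.

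Next, adjoin the $t$ coordinates of $B$ one at a time as constant neurons: fixed to $0$ for $B_0$ and fixed to $1$ for $B_1$. The proof of \Cref{lem:constant-neuron} shows that each step replaces the polarized ideal $I$ by $I\cdot S_{n+1}+(x_{n+1})$ or $I\cdot S_{n+1}+(y_{n+1})$, respectively. Each step raises $\pd$ by one and keeps $\reg$ unchanged. After $t$ steps the ideal is
\[
\langle x_iy_j : i,j\in A,\ i\neq j\rangle+\langle x_i:i\in B_0\rangle+\langle y_j:j\in B_1\rangle,
\]
with $(\pd,\reg)=(2m-3+t,3)$.

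Finally, adjoin the $n-m-t$ coordinates of $F$ as free neurons. By the proof of \Cref{lem:free-neuron}, this only extends the ideal to the larger polynomial ring, so the generators are unchanged. By \Cref{lem:add_variable_pd_reg}, $\pd$ and $\reg$ are also unchanged. Since the code obtained at the end is exactly $\{0^A,1^A\}\times\{0\}^{B_0}\times\{1\}^{B_1}\times\F_2^F$, this gives both claims.

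The one point needing care is that the lemmas are stated with the new neuron appended as the last coordinate, whereas $A$, $B$, $F$ may be interleaved in $[n]$. I will handle this with a short remark: permuting coordinates of $\F_2^n$ permutes the variables pairs $(x_i,y_i)$ simultaneously. This relabels $J_\C$, $CF(J_\C)$ and $\P(J_\C)$ compatibly and preserves all graded Betti numbers. I expect this relabeling step, not any homological computation, to be the only obstacle. The $t=0$ case is simply \Cref{thm:all-or-nothing} followed by free extensions.
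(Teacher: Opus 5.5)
Your proposal is correct and is essentially the paper's argument: both rest on \Cref{thm:all-or-nothing} for the $A$-block, and on \Cref{lem:add_variable_pd_reg} to adjoin the $t$ linear generators (each raising $\pd$ by one) and the free variables (changing nothing). The only difference is packaging: the paper finds the generators of $I$ by computing the minimal pseudo-monomials of $J_\C$ directly on the partition $[n]=A\sqcup B\sqcup F$, so it needs no relabeling, whereas you take the generator description from the proofs of \Cref{lem:constant-neuron,lem:free-neuron} and add a coordinate-permutation remark, which is harmless.
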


\begin{proof}
Write $S=S_A\otimes_k S_B\otimes_k S_F$, where
\[
S_A=k[x_i,y_i:i\in A],\qquad
S_B=k[x_i,y_i:i\in B],\qquad
S_F=k[x_i,y_i:i\in F].
\]
Set
\[
I_A:=\big\langle x_i y_j : i,j\in A,\ i\neq j\big\rangle \subseteq S_A,
\qquad
L:=\langle x_i : i\in B_0\rangle+\langle y_j : j\in B_1\rangle \subseteq S_B.
\]
The variables appearing in $I_A$ and in $L$ are disjoint, and $L$ is generated by $t$ variables.

We first identify the generators of $I$. Since $\C$ restricts on $A$ to the two words $0^A$ and $1^A$,
no codeword has $v_i=1$ and $v_j=0$ for distinct $i,j\in A$. Thus $x_i(1-x_j)\in J_\C$; it is minimal
because both values occur in each coordinate of $A$. Polarizing gives $x_i y_j\in I$ for all $i\neq j$ in $A$.
On $B_0$ every codeword has $v_i=0$, so $x_i\in J_\C$ minimally and hence $x_i\in I$ for $i\in B_0$.
Similarly, on $B_1$ every codeword has $v_j=1$, so $(1-x_j)\in J_\C$ minimally and polarizes to $y_j\in I$.

Conversely, any noncodeword either violates a fixed coordinate in $B$ (forcing divisibility by a generator of $L$)
or differs from both $0^A$ and $1^A$ on $A$, in which case it contains a $1/0$ mismatch among coordinates of $A$
and is divisible by some $x_i(1-x_j)$ with $i\neq j$. Hence these are exactly the minimal generators, and
\[
I = I_A\cdot S + L\cdot S.
\]

Since $I_A$ is the all-or-nothing ideal on $m=|A|$ coordinates, \Cref{thm:all-or-nothing} gives
\[
(\pd(I_A), \reg(I_A))=(2m-3,3).
\]
Adjoining the $t$ generators of $L$ one at a time, \Cref{lem:add_variable_pd_reg} increases projective dimension
by $1$ at each step and leaves regularity unchanged. Therefore
\[
(\pd(I), \reg(I))=((2m-3)+t,3).\qedhere
\]
\end{proof}

Proposition~\ref{prop:reg3_tunable_invariants} reduces the problem of realizing projective dimensions on the line
$\reg=3$ to choosing parameters $(m,t)$ with $(2m-3)+t=p$. We now show that this is possible for every
$0\le p\le 2n-3$.

\begin{theorem}\label{thm:reg3_all_pdim}

Fix $n\ge 3$. For every integer $p\in[0,2n-3]$, there exists a neural code $\C\subseteq\F_2^n$
such that 
\[
(\pd(I), \reg(I))=(p,3).
\]
\end{theorem}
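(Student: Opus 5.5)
The plan is to read off almost every value from \Cref{prop:reg3_tunable_invariants}, and to handle the single value it misses, $p=0$, with \Cref{prop:pdim0_classification}.

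\emph{Case $p=0$.} Construction~\ref{const:reg3_tunable} always gives $\pd(I)\ge 2m-3\ge 1$, so it cannot reach $p=0$. Instead, since $n\ge 3$, choose disjoint $\sigma,\tau\subseteq[n]$ with $|\sigma|+|\tau|=3$; for instance $\sigma=\{1,2,3\}$ and $\tau=\emptyset$. Set $\C=\F_2^n\setminus Q(\sigma,\tau)$. By \Cref{prop:pdim0_classification}, $I=\langle x_\sigma y_\tau\rangle$ is principal, so $\pd(I)=0$ and $\reg(I)=r=3$.

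\emph{Case $1\le p\le 2n-3$.} Here I would use \Cref{prop:reg3_tunable_invariants}. It suffices to find integers $m,t$ with
\[
2\le m\le n,\qquad 0\le t\le n-m,\qquad (2m-3)+t=p.
\]
For fixed $m$, letting $t$ range over $[0,n-m]$ realizes exactly the interval $[2m-3,\,m+n-3]$.

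I then check that these intervals, for $m=2,\dots,n$, cover $[1,2n-3]$ with no gaps.
\begin{itemize}
\item For $m=2$ the interval is $[1,n-1]$.
\item For $m=n$ the interval is the single point $\{2n-3\}$.
\item Passing from $m$ to $m+1$, the next interval starts at $2m-1$. This is at most $(m+n-3)+1$ exactly when $m\le n-1$, which holds whenever $m+1\le n$.
\end{itemize}
So consecutive intervals overlap or abut, and their union is all of $[1,2n-3]$.

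A concrete choice that works is the following.
\begin{itemize}
\item If $p\le n-1$, take $m=2$ and $t=p-1$; note $t\le n-2=n-m$.
\item If $p\ge n-1$, take $m=p-n+3$ and $t=n-m$. Then $2m-3+n-m=m+n-3=p$, and $2\le m\le n$ because $n-1\le p\le 2n-3$.
\end{itemize}
In either case, split $B$ arbitrarily (say $B_0=B$, $B_1=\emptyset$) and let $F$ be the remaining coordinates. Then \Cref{prop:reg3_tunable_invariants} gives $(\pd(I),\reg(I))=(p,3)$.

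No step here is a real obstacle. The only points needing care are the interval-covering arithmetic and remembering that $p=0$ lies outside the range of the construction and must be handled separately by \Cref{prop:pdim0_classification}.
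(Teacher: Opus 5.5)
Your proposal is correct and follows the paper's proof essentially verbatim. For $p=0$ you use the degree-$3$ principal ideal $\langle x_\sigma y_\tau\rangle$ from \Cref{prop:pdim0_classification}; for $1\le p\le n-1$ you take $m=2$, $t=p-1$; and for the upper range you take $m=p-n+3$, $t=n-m$, which is exactly the paper's $t=2n-3-p$ (your two cases agree at the overlap $p=n-1$, and the interval-covering check is a harmless extra).
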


\begin{proof}
If $p=0$, choose disjoint $\sigma,\tau\subseteq[n]$ with $|\sigma|+|\tau|=3$ and set
$C=\F_2^n\setminus Q(\sigma,\tau)$. Then $I=\langle x_\sigma y_\tau\rangle$ is principal of degree $3$. So $\pd(I)=0$ and $\reg(I)=3$.

Now assume $1\le p\le 2n-3$.
If $1\le p\le n-1$, apply Construction~\ref{const:reg3_tunable} with $m=2$ and $t=p-1$.
Then Proposition~\ref{prop:reg3_tunable_invariants} gives
\[
\pd(I)=(2\cdot 2-3)+(p-1)=p
\qquad\text{and}\qquad
\reg(I)=3.
\]

If $n\le p\le 2n-3$, set
\[
m:=p-n+3 \quad\text{and}\quad t:=2n-3-p.
\]
Then $3\le m\le n$ and $0\le t\le n-m$, so Construction~\ref{const:reg3_tunable} applies.
Proposition~\ref{prop:reg3_tunable_invariants} yields
\[
\pd(I)=(2m-3)+t
=\bigl(2(p-n+3)-3\bigr)+(2n-3-p)=p,
\qquad
\reg(I)=3. \qedhere
\]
\end{proof}

\section{Constructions of neural codes below the line $p+r\le n$}\label{sec:last}

In this section we construct nondegenerate neural codes whose polarized neural ideals realize every pair
$(p,r)=(\pd(I),\reg(I))$ in the region $p+r\le n$, with $0\le p\le n-1$ and $1\le r\le n$.
Our main input is that simplicial codes have neural ideals generated purely by Type~1 relations, so the
polar ideal is a Stanley--Reisner ideal \cite[Lemma~4.4]{smb}.  Indeed,  any squarefree monomial ideal only in the $x$-variables are polarized neural ideals generated by Type~1 relations. By focusing on simplicial complexes, we use
Auslander--Buchsbaum and Cohen--Macaulayness to control projective dimension and Hochster’s formula to detect
regularity via top homology.  Finally, adding free neurons extends constructions from $m=p+r$ to arbitrary
$n$ without changing $\pd$ or $\reg$.

\begin{lemma}\label{lem:simplicial-code-SR}
Let $\Delta$ be a simplicial complex on $[m]$ and set
\[
\C_\Delta:=\{c\in\F_2^m:\supp(c)\in\Delta\}.
\]
Then the neural ideal is the Stanley--Reisner ideal,
\[
J_{\C_\Delta}=I_\Delta\subseteq k[x_1,\dots,x_m],
\]
and hence
\[
\P(J_{\C_\Delta})  =  I_\Delta\cdot T,
\qquad
T:=k[x_1,\dots,x_m,y_1,\dots,y_m].
\]
Moreover, the induced subcomplex of the polar complex $\Delta_{\C_\Delta}$ on the $x$--vertices
$\{x_1,\dots,x_m\}$ is $\Delta$.
\end{lemma}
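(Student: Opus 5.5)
I plan to prove the equality $J_{\C_\Delta}=I_\Delta$ by double inclusion, then identify the canonical form and polarize, and finally read off the induced subcomplex on the $x$-vertices.

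For $I_\Delta\subseteq J_{\C_\Delta}$, take a minimal nonface $\sigma$ of $\Delta$. Every $v\in\F_2^m$ with $v_i=1$ for all $i\in\sigma$ has $\supp(v)\supseteq\sigma$. Since $\Delta$ is closed under subsets, $\supp(v)\notin\Delta$, so $v$ is a noncodeword. Summing $\rho_v$ over these $v$ gives
\[
x_\sigma=x_\sigma\prod_{k\notin\sigma}\bigl(x_k+(1-x_k)\bigr)=\sum_{v:\ v|_\sigma=1}\rho_v\in J_{\C_\Delta},
\]
exactly as in the proofs of \Cref{thm:all-or-nothing} and \Cref{thm:reg1-classification}. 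For the reverse inclusion, let $w\notin\C_\Delta$ and put $\tau=\supp(w)\notin\Delta$. Then $\tau$ contains a minimal nonface $\sigma$, and $x_\sigma\mid x_\tau\mid\rho_w$. Hence every generator $\rho_w$ of $J_{\C_\Delta}$ lies in $I_\Delta$.

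Next I identify $CF(J_{\C_\Delta})$. I claim it is exactly $\{x_\sigma:\sigma \text{ a minimal nonface}\}$, and this is the step needing the most care. Let $f=x_\alpha\prod_{j\in\beta}(1-x_j)$ be any pseudo-monomial in $J_{\C_\Delta}$ with $\alpha\cap\beta=\emptyset$. Evaluate $f$ at the indicator vector $e_\alpha$ of $\alpha$: its $\alpha$-coordinates are $1$ and its $\beta$-coordinates are $0$, so $f(e_\alpha)=1$. By \Cref{rem:boolean-zeroset}, $e_\alpha\notin\C_\Delta$, i.e.\ $\alpha\notin\Delta$. So $\alpha$ contains a minimal nonface $\sigma$, and $x_\sigma\mid f$. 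Consequently every minimal pseudo-monomial of $J_{\C_\Delta}$ has the form $x_\sigma$ with $\sigma$ a minimal nonface, and each such $x_\sigma$ is minimal, since its proper divisors $x_{\sigma'}$ have $\sigma'\in\Delta$ and do not vanish at $e_{\sigma'}\in\C_\Delta$. (If $\Delta$ is the void complex, then $\C_\Delta=\emptyset$ and both sides equal the unit ideal; I would note this degenerate case separately.)

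Polarization fixes the monomials $x_\sigma$, so by \cite[Theorem 3.2]{gjs} we get $\P(J_{\C_\Delta})=\langle x_\sigma\rangle T=I_\Delta T$. For the final assertion, a set $W\subseteq\{x_1,\dots,x_m\}$ is a face of the polar complex iff no generator $x_\sigma$ divides $x_W$, i.e.\ iff $W\in\Delta$. Since the restriction to $x$-vertices of the Stanley--Reisner complex of $I_\Delta T$ is the complex of $I_\Delta$, this induced subcomplex is $\Delta$. One detail to check is that $x_i$ is a vertex exactly when $\{i\}\in\Delta$; ghost vertices are harmless for this identification.
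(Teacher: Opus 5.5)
Your proof is correct. It differs from the paper's in being self-contained rather than in its overall structure. The paper simply invokes \cite[Lemma~4.4]{smb} for the fact that, for a simplicial code, $CF(J_{\C_\Delta})$ has no Type~2 or Type~3 relations and consists of the monomials $x_\sigma$ for minimal nonfaces $\sigma$. It then polarizes and reads off the restriction to the $x$-vertices exactly as you do.

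You prove that cited fact directly, in three steps:
\begin{itemize}
\item The summation identity gives $x_\sigma\in J_{\C_\Delta}$; this is where closure of $\Delta$ under subsets is used.
\item Divisibility $x_\sigma\mid x_{\supp(w)}\mid\rho_w$ gives the reverse inclusion.
\item Evaluating an arbitrary pseudo-monomial $x_\alpha\prod_{j\in\beta}(1-x_j)\in J_{\C_\Delta}$ at $e_\alpha$ forces $\alpha\notin\Delta$. So every pseudo-monomial of $J_{\C_\Delta}$ is divisible by some $x_\sigma\in J_{\C_\Delta}$, which rules out Type~2/3 relations in the canonical form.
\end{itemize}

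The evaluation step has a side benefit. It shows every pseudo-monomial of $J_{\C_\Delta}$ polarizes to a multiple of some $x_\sigma$. Hence $\P(J_{\C_\Delta})=I_\Delta T$ follows straight from the definition of $\P$, which polarizes all pseudo-monomials of $J_\C$, without needing \cite[Theorem~3.2]{gjs}.

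The paper's version is shorter. Yours removes the dependence on the external lemma and makes the mechanism visible: indicator vectors of faces are codewords, so simplicial codes are purely Type~1.
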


\begin{proof}
By construction, $\supp(\C_\Delta)=\Delta$. Since $\Delta$ is a simplicial complex, it was shown in \cite{smb}
 that the canonical form $CF(J_{\C_\Delta})$ has no Type~2 or Type~3 relations and that $J_{\C_\Delta}$
is generated by the Type~1 relations, i.e.\ by the squarefree monomials corresponding to minimal nonfaces of
$\Delta$ \cite[Lemma~4.4]{smb}. Thus $J_{\C_\Delta}=I_\Delta$ in $k[x_1,\dots,x_m]$, and polarization yields
$\P(J_{\C_\Delta})=I_\Delta\cdot T$.

Finally, since $\P(J_{\C_\Delta})$ is generated by monomials in the $x$--variables, the induced subcomplex of
$\Delta_{\C_\Delta}$ on $\{x_1,\dots,x_m\}$ has faces exactly those $F\subseteq[m]$ for which $x_F\notin I_\Delta$,
namely $F\in\Delta$. Hence $(\Delta_{\C_\Delta})|_{\{x_1,\dots,x_m\}}=\Delta$.
\end{proof}

We now realize all pairs $(\pd (I),\reg(I))$ in the region $p+r\le n$.  Since the cases of $\reg=1,2,3$
were handled in \Cref{sec:reg1,sec:reg2-line,sec:reg3},  throughout we assume $r\ge 4$.

\begin{theorem}\label{thm:pr-band-realization}
Fix integers $n\geq 4$, $p\in [0, n-1]$, and $r\in [4,n]$ with $p+r\le n$.
Then there exists a nondegenerate neural code $\C\subseteq \F_2^n$ such that
\[
(\pd(I),\reg(I))=(p,r).
\]
\end{theorem}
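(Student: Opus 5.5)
The plan is to build the code on exactly $m:=p+r$ neurons as a simplicial code whose polarized ideal is a squarefree Veronese ideal. We then pad with $n-m$ free neurons using \Cref{lem:free-neuron}, which preserves both $\pd$ and $\reg$. Since $p+r\le n$, we have $m\le n$.

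Let $\Gamma$ be the $(r-2)$-skeleton of the full simplex on $[m]$, and set $\C^{(m)}:=\C_\Gamma=\{c\in\F_2^m: |\supp(c)|\le r-1\}$. By \Cref{lem:simplicial-code-SR}, $\P(J_{\C^{(m)}})=I_\Gamma\cdot T$. Here $I_\Gamma$ is generated by all squarefree monomials of degree $r$ in $x_1,\dots,x_m$, since the minimal nonfaces of $\Gamma$ are exactly the $r$-subsets. Adjoining the variables $y_1,\dots,y_m$ changes neither $\pd$ nor $\reg$, by \Cref{lem:add_variable_pd_reg}. So it suffices to compute $\pd$ and $\reg$ of $I_\Gamma$ in $k[x_1,\dots,x_m]$.

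For the invariants I would use the route suggested at the start of this section. A skeleton of a simplex is Cohen--Macaulay, and $\dim k[\Gamma]=r-1$. Auslander--Buchsbaum then gives $\pd(k[\Gamma])=m-(r-1)$, hence $\pd(I_\Gamma)=m-r=p$. For regularity, every generator has degree $r$, so $\reg(I_\Gamma)\ge r$. Conversely, by Hochster's formula a Betti number $\beta_{i,i+s}$ with $s\ge r+1$ requires $\widetilde H_{s-2}(\Gamma_W)\neq 0$ with $s-2\ge r-1>\dim\Gamma$, which is impossible. Thus $\reg(I_\Gamma)=r$, and in fact the resolution is $r$-linear. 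Nonvanishing of the top homology is automatic here: $\widetilde H_{r-2}(\Gamma)\neq0$ because $m\ge r$, which matches the Cohen--Macaulay regularity criterion. The case $p=0$ gives $\Gamma=\partial(\text{simplex on }[r])$ and $I_\Gamma$ principal, consistent with \Cref{prop:pdim0_classification}.

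For nondegeneracy, $\C^{(m)}$ contains $0^m$ and every unit vector $e_i$ because $r\ge 4>1$, so no coordinate is fixed on $\C^{(m)}$. Then $\C:=\C^{(m)}\times\F_2^{\,n-m}$ is again nondegenerate, since free coordinates take both values. By repeated use of \Cref{lem:free-neuron}, $(\pd(I),\reg(I))=(p,r)$.

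I do not expect a serious obstacle. The only point needing care is that \Cref{lem:simplicial-code-SR} really yields an ideal in the $x$-variables alone, with no Type~2 or Type~3 relations, so that the extension by the $y$-variables is flat and the invariants transfer unchanged.
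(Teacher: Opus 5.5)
Your proof is correct and follows the paper's strategy. Both build a simplicial code on $m=p+r$ neurons from a Cohen--Macaulay $(r-2)$-dimensional complex with vertex set $[m]$, use \Cref{lem:simplicial-code-SR} to reduce to the Stanley--Reisner ideal, get $\pd$ from Auslander--Buchsbaum and $\reg\le r$ from Hochster's dimension bound, and pad with free neurons.

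The only difference is the complex. You take the $(r-2)$-skeleton of the simplex, so that $I_\Gamma$ is the squarefree Veronese ideal, where the paper takes a stacked $(r-2)$-sphere. This gives $\reg\ge r$ directly from the generator degrees instead of from $\widetilde H_{r-2}(\Delta;k)\cong k$, and it avoids having to produce a sphere on exactly $m$ vertices. Your construction also carries over unchanged to $r=2,3$, still giving nondegenerate codes.
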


\begin{proof}
Put $m:=p+r$, so $m\le n$ and $m\ge r$. Choose an $(r-2)$--dimensional simplicial sphere $\Delta$ on
vertex set $[m]$; for instance, one may take a stacked $(r-2)$--sphere with $m$ vertices.

Define the simplicial code
\[
\C'=\C_\Delta=\{c\in \F_2^m : \supp(c)\in \Delta\}.
\]
Since $\emptyset\in\Delta$, we have $0\cdots 0\in \C'$. Moreover, because $\Delta$ has vertex set $[m]$
(no ghost vertices), each singleton $\{i\}$ lies in $\Delta$, so the word with support $\{i\}$ lies in $\C'$.
Hence for every $i\in[m]$ there are codewords in $\C'$ with $i$th coordinate equal to $0$ and to $1$, and
$\C'$ is nondegenerate. Now add $n-m$ free coordinates and set
\[
\C  :=  \C'\times \F_2^{ n-m} \subseteq \F_2^n.
\]
This preserves nondegeneracy, since each of the last $n-m$ coordinates attains both values in~$\C$.

Let $S_m:=k[x_1,\dots,x_m]$, let $T:=k[x_1,\dots,x_m,y_1,\dots,y_m]$. By Lemma~\ref{lem:simplicial-code-SR},
\[
\P(J_{\C'})=I_\Delta\cdot T.
\]
Iterating \Cref{lem:free-neuron} to account for the $n-m$ free coordinates yields
\[
I=\P(J_{\C'})\cdot S = I_\Delta\cdot S.
\]
In particular, adjoining these extra variables does not change projective dimension or regularity, so it
suffices to compute $\pd(I_\Delta)$ and $\reg(I_\Delta)$ over $S_m$.

Because $\Delta$ is a simplicial sphere, the Stanley--Reisner ring $S_m/I_\Delta$ is Cohen--Macaulay
(by Reisner's criterion; see, e.g., \cite[Theorem~5.1.9]{miller}), and
\[
\dim(S_m/I_\Delta)=\dim\Delta+1=r-1.
\]
Hence $\depth(S_m/I_\Delta)=r-1$, and Auslander--Buchsbaum gives
\[
\pd_{S_m}(I_\Delta)+1= \pd_{S_m}(S_m/I_\Delta)=m-\depth(S_m/I_\Delta)=m-(r-1)=p+1.
\]
% From the short exact sequence $0\to I_\Delta\to S_m\to S_m/I_\Delta\to 0$, 
It follows that $\pd_{S_m}(I_\Delta)=p$.

For regularity, apply Hochster’s formula to $I_\Delta$. Since $\Delta$ is an $(r-2)$--sphere on $m$ vertices,
$\widetilde H_{r-2}(\Delta;k)\cong k$. Taking $W=V(\Delta)$ (so $|W|=m$ and $\Delta_W=\Delta$) yields
\[
\beta_{p,m}(I_\Delta)
=\dim_k \widetilde H_{m-p-2}(\Delta;k)
=\dim_k \widetilde H_{r-2}(\Delta;k)
=1,
\]
and therefore $\reg(I_\Delta)\ge m-p=r$.
Conversely, if $\beta_{i,j}(I_\Delta)\neq 0$, then Hochster’s formula gives
$\widetilde H_{j-i-2}(\Delta_W;k)\neq 0$ for some $W$ with $|W|=j$. Hence
\[
j-i-2 \le \dim(\Delta_W)\le \dim(\Delta)=r-2,
\]
so $j-i\le r$ for every nonzero graded Betti number. Thus $\reg(I_\Delta)\le r$, and we conclude that
$\reg(I_\Delta)=r$.

Finally, since $I=I_\Delta\cdot S$ is obtained from $I_\Delta$ by adjoining variables, \Cref{lem:free-neuron}
implies that $\pd$ and $\reg$ are preserved. Therefore $\pd(I)=p$ and $\reg(I)=r$.
\end{proof}

\begin{ex}\label{ex:n8p3r4}
Let $n=8$, $p=3$, $r=4$. Here $m=p+r=7$ and $r-2=2$, so we need a $2$--sphere $\Delta$ on vertices $[7]$. We construct $\Delta$ as a stacked $2$-sphere on $[7]$ by starting from the tetrahedron boundary
$\partial\Delta_3$ on $\{1,2,3,4\}$ and successively stacking on the facets $123$, $124$, and $134$,
introducing the new vertices $5,6,7$ in turn. Then its  facets (triangles) are 
\[
\begin{aligned}
\mathcal{F}(\Delta)=\{&
234, 125, 235, 135, 126, 246, 146, 137, 347, 147\}.
\end{aligned}
\]
A direct check shows that the minimal nonfaces of $\Delta$ are
\[
27, 36, 45, 56, 57, 67, 123, 124, 134.
\]
Define $\C'=\C_\Delta\subseteq \F_2^7$ by $\supp(c)\in\Delta$, and then set 
\[
\C:=\C'\times \F_2  \subseteq \F_2^8.
\]

By Lemma~\ref{lem:simplicial-code-SR}, the polarized neural ideal is 
generated by these minimal nonfaces (now viewed in $k[x_1,\dots,x_8,y_1,\dots,y_8]$):
\[
\P(J_\C)
=
\langle 
x_2x_7, x_3x_6, x_4x_5, x_5x_6, x_5x_7, x_6x_7, 
x_1x_2x_3, x_1x_2x_4, x_1x_3x_4
\rangle.
\]
By Theorem~\ref{thm:pr-band-realization}, this ideal satisfies
\[
\pd(\P(J_\C))=3
\qquad\text{and}\qquad
\reg(\P(J_\C))=4.
\]

Equivalently (as a description of the code), $\C$ consists of all words in $\F_2^8$ whose support
on the first $7$ coordinates does \emph{not} contain any of the forbidden sets
\[
\{2,7\},\{3,6\},\{4,5\},\{5,6\},\{5,7\},\{6,7\},\{1,2,3\},\{1,2,4\},\{1,3,4\},
\]
with the $8$th coordinate arbitrary.
\end{ex}

\section{Conclusion and Further directions}\label{sec:further-directions}

The results above suggest an emerging dictionary between elementary features of the Hamming cube and
the homological data of the polarized neural ideal $\P(J_\C)$. Throughout, we translate the
combinatorics of $\C\subseteq\F_2^n$ into the topology of induced subcomplexes of the polar complex
$\Delta_\C$ via Hochster’s formula. Several sharp boundary phenomena arise from particularly rigid
cube structure: coordinate subcubes and complements of coordinate subcubes control 
$\reg(\P(J_\C))=1$ and $\pd(\P(J_\C))=0$, while antipodality controls the extremal corners
$\reg(\P(J_\C))=2n-1$ and $\pd(\P(J_\C))=2n-3$. These patterns raise the prospect that the achievable
region of $(\pd,\reg)$ values,  even finer invariants such as the shape of the Betti table, may
be describable using a small amount of combinatorial data, and that additional geometric restrictions
on $\C$ (e.g.\ convexity or hyperplane structure) may dramatically constrain what can occur. We
conclude our study by collecting several questions motivated by this viewpoint.

\begin{question}\label{ques:duality-principle}
To what extent is the pair $(\pd(\P(J_\C)),\reg(\P(J_\C)))$ controlled by Hamming-cube geometry of
$\C\subseteq \F_2^n$?  Are the boundary classifications (subcubes/co-subcubes and antipodal configurations)
shadows of a broader cube-theoretic organization of the entire achievable $(\pd,\reg)$ region, or do
genuinely new behaviors appear away from the boundary?
\end{question}

\begin{question}\label{ques:region}
For fixed $n$, determine the set \[ \mathcal{R}_n:=\bigl\{(\pd(\P(J_\C)),\reg(\P(J_\C))) : \C\subseteq \F_2^n \bigr\}. \] 
\end{question}

\begin{question}\label{ques:betti}
Give a combinatorial description of the multigraded Betti numbers of $\P(J_\C)$ in terms of
Hamming-cube structure of $\C$ (or of its complement $\F_2^n\setminus\C$).
\end{question}

\begin{question}\label{ques:restricted-classes}
For a class $\mathfrak{F}$ of codes (e.g.\ convex codes, stable hyperplane codes), determine
\[
\mathcal{R}_n(\mathfrak{F}) := \{(\pd(\P(J_\C)),\reg(\P(J_\C))) : \C\in\mathfrak{F},\ \C\subseteq\F_2^n\}.
\]
How does $\mathcal{R}_n(\mathfrak{F})$ compare to $\mathcal{R}_n$?
\end{question}

\textbf{Acknowledgements.} We thank Rebecca R. G. and Hugh Geller for inspiring this project and for many helpful conversations. Portions of this work grew out of the second author E.~Lew’s master’s thesis, completed at Bryn Mawr College in May 2025, and were further developed during first author's visit to the Fields Institute for the Thematic Program in Commutative Algebra and Applications in June 2025. The authors thank both Bryn Mawr College and the Fields Institute for their support. The first author (Kara) was supported by NSF grant DMS-2418805.

\bibliographystyle{abbrv}
\bibliography{refs}

\end{document}